\definecolor{bleu_sombre}{rgb}{0,0,0.6}  \definecolor{rouge_sombre}{rgb}{0.8,0,0}\definecolor{vert_sombre}{rgb}{0,0.6,0}
\renewcommand{\leq}{\leqslant}	\renewcommand{\geq}{\geqslant}
\newcommand{\nb}{\nabla}
\newcommand{\Int}[2]{\displaystyle{\int_{#1}^{#2}}}
\newcommand{\Ab}{\mathbf {A}}
\newcommand{\Bb}{\mathbf {B}}
\newcommand{\ab}{\mathbf{a}}
\newcommand{\bb}{\mathbf{b}}
\newcommand{\R}{\mathbb{R}}
\newcommand{\nf}{\mathbf n}
\def\sig#1{\vbox{\hsize=5.5cm
		\kern2cm\hrule\kern1ex
		\hbox to \hsize{\strut\hfil #1 \hfil}}}
\renewcommand{\Re}{\mathrm{Re}\,}
\def\({\left(}
\def\){\right)}
\def\<{\left\langle}
\def\>{\right\rangle}
\def\n{\mathbf{n}}
\numberwithin{equation}{section}
\newcommand{\dd}{\mathrm{d}}
\newcommand{\be}{\begin{equation}}
\newcommand{\ee}{\end{equation}}
\newcommand{\bea}{\begin{eqnarray}}
\newcommand{\eea}{\end{eqnarray}}
\newcommand{\bee}{\begin{eqnarray*}}
\newcommand{\eee}{\end{eqnarray*}}
\newcommand{\curl}{{\rm curl\,}}
\def\n{\mathbf{n}}
\newtheorem{thm}{Theorem}[section]
\newtheorem{theorem}[thm]{Theorem}
\newtheorem{lemma}[thm]{Lemma}
\newtheorem{proposition}[thm]{Proposition}
\theoremstyle{remark}
\newtheorem{rem}[thm]{Remark}
\def\sig#1{\vbox{\hsize=5.5cm
\kern2cm\hrule\kern1ex
\hbox to \hsize{\strut\hfil #1 \hfil}}}
\numberwithin{equation}{section}
\begin{document}
\title[3D Magnetic Robin Laplacian]{Magnetic confinement for the 3D  Robin Laplacian}
\subjclass[2010]{Primary 35P15, 47A10, 47F05}

\keywords{Magnetic Laplacian, Robin boundary condition, eigenvalues, diamagnetic inequalities}
\author[B. Helffer]{Bernard Helffer}
\address[Bernard Helffer]{Laboratoire de Math\'ematiques Jean Leray}
\email{bernard.helffer@univ-nantes.fr}
\author[A. Kachmar]{Ayman Kachmar}
\address[Ayman Kachmar]{Lebanese University, Department of Mathematics, Nabatieh, Lebanon.}
\email{ayman.kashmar@gmail.com}
\author[N. Raymond]{Nicolas Raymond}
\address[Nicolas Raymond]{Laboratoire Angevin de Recherche en Math\'ematiques}
\email{nicolas.raymond@univ-angers.fr}

\maketitle
\begin{abstract}
We determine   accurate asymptotics of the lowest eigenvalue  for the Laplace operator with a smooth magnetic field and Robin boundary conditions in a smooth 3D domain, when the Robin parameter tends to $+\infty$. Our results identify a critical regime where the contribution of the magnetic field and the Robin condition are of the same order. In this critical regime, we derive  an effective operator defined on the boundary of the domain.
\end{abstract}

\section{Introduction}

\subsection{Magnetic Robin Laplacian} 

We denote by $\Omega\subset\R^3$ a bounded   domain with a smooth boundary $\Gamma=\partial\Omega$. 
We study the lowest eigenvalue of the magnetic Robin
Laplacian in $L^2(\Omega)$,
\begin{equation}\label{Shr-op-Gen}
\mathcal{P}_\gamma=(-i\nabla+\ab)^2,
\end{equation}
 with domain
\begin{equation}\label{eq:bc}
D(\mathcal P_\gamma)=\{u\in H^2(\Omega)~:~i\, \nf\cdot(-i\nabla +\ab)u+\gamma\,u=0\quad{\rm on}~\partial\Omega\}\,.
\end{equation}
Here $\nf$ is the unit outward pointing normal vector of $\Gamma$, $\gamma>0$ the \emph{Robin} parameter and  $\ab\in C^2(\overline\Omega)$.  The vector field $\ab$ generates the  magnetic field
\begin{equation}\label{eq:A0}
\bb:=\curl\ab\in C^1(\overline\Omega)\,.
\end{equation}
Our hypotheses on $\ab$ and $\bb$ cover the physically interesting case of a uniform magnetic field of intensity $b$,  $\ab=\frac{b}2(-x_2,x_1,0)$ and $\bb=(0,0,b)$.

The operator $\mathcal P^b_\gamma$ is defined as the self-adjoint operator
associated with the following 
quadratic form (see, for instance, \cite[Ch.~4]{Hel}) 
\begin{equation}\label{QF-Gen}
H^1(\Omega)\ni u\mapsto  \mathcal{Q}^\ab_\gamma(u):=\int_\Omega\bigl|(-i\nabla+\ab) u(x)\bigr|^2\,dx-\gamma\Int{\Gamma}{}|u(x)|^{2}\dd s(x)\,.
\end{equation}
Our aim is to examine the magnetic effects on the principal eigenvalue 
\begin{equation}\label{eq:p-ev}
\lambda(\gamma,\bb)=\inf_{u\in H^1(\Omega)\setminus\{0\}}\frac{\mathcal{Q}^\ab_\gamma(u)}{\|u\|_{L^2(\Omega)}^2}
\end{equation} 
when  the Robin parameter $\gamma$ tends to $+\infty$. 

  The case $\gamma=0$ corresponds to the Neumann magnetic Laplacian, which has been studied in many papers \cite{HM3d, LP3d, R3d}.

\subsection{Mean curvature bounds}
In the case without  magnetic field, $\bb=0$, Pankrashkin and Popoff have proved in \cite{PP1} that, as $\gamma\to+\infty$, the lowest eigenvalue satisfies the following
\begin{equation}\label{as1}
\lambda(\gamma,0)=-\gamma^2-2\gamma \kappa _{\rm max}(\Omega)
+\mathcal O(\gamma^{2/3})\,,
\end{equation}
with 
\begin{equation}\label{defH}
\kappa _{\rm max}(\Omega):=\max\limits_{x\in\partial\Omega}\kappa _\Omega(x)\,,
\end{equation}
where $\kappa (x)=\kappa _\Omega(x)$ the mean curvature of $\partial\Omega$ at $x$.

The same asymptotic expansion continues to hold in the presence of a $\gamma$-independent magnetic field $\bb$. In fact, we have the non-asymptotic bounds
\begin{equation}\label{eq:PP-b=9}\lambda(\gamma,0)\leq \lambda(\gamma,\bb)\leq \lambda(\gamma,0)+ \|\ab\|^2_{L^\infty(\Omega)}\,.
\end{equation}
The lower bound is a simple consequence of the diamagnetic inequality, while the upper bound results by using the non-magnetic real eigenfunction (the eigenfunction corresponding to the eigenvalue $\lambda(\gamma,0)$)
as a test function for the quadratic form $\mathcal Q^\ab_\gamma$. Note incidently that the upper bound
 can be improved by minimizing over the $\ab$ such that $\curl \ab=\bb$.
 
  Consequently, we have
\begin{equation}\label{eq:lambda-b=0}
\lambda(\gamma,\bb)=-\gamma^2-2\gamma \kappa _{\rm max}(\Omega)
+\mathcal O(\gamma^{2/3})\,.
\end{equation}
It follows then, by an argument involving Agmon estimates, that the eigenfunctions concentrate near the set of points of maximal mean curvature, $\{\kappa_\Omega(x)=\kappa_{\rm max}(\Omega)\}$.

\subsection{Magnetic confinement}

The asymptotics expansion \eqref{eq:lambda-b=0} does not display  the contributions of the magnetic field, since the intensity of  the magnetic field is \emph{relatively} small. 

Magnetic effects are then expected  to appear in the large field limit, $\bb\gg 1$. We could start with the following rough lower bound, obtained by the diamagnetic inequality and the min-max principle,
\[ \lambda(\gamma,\bb)\geq (1-\delta)\lambda\Big(\frac{\gamma}{1-\delta},0\Big)+\delta\lambda(0,\bb)\qquad(0<\delta<1)\,,\]
which decouples the contributions coming from the large Robin parameter and the large magnetic field.  According to \eqref{as1}, the term $\lambda(\gamma,0)$ behaves like 
$-\gamma^2$ for large $\gamma$. The Neumann  eigenvalue $\lambda(0,\bb)$ was studied in \cite{HM3d}; it   behaves like   $\Theta_0b_0$ in the regime  
\[ b_0:=\inf\limits_{x\in\partial\Omega}\|\bb(x)\|\gg 1\,,\] 
where $\Theta_0\in(\frac 12,1)$ is  a universal constant  (the de Gennes constant). This comparison argument shows that the magnetic effects are   dominant when $ b_0 \gg \gamma^2$. In this case, the effective boundary condition is the Neumann condition ($\gamma=0$) and the role of the Robin condition  appear in the sub-leading terms (see \cite{K-dia, K-jmp} for the analysis of these effects in 2D domains).

Aiming to understand the competition  between the  Robin condition and the  magnetic field,  we  take the magnetic field parameter in the form
\begin{equation}\label{eq:b}
\bb=\gamma^\sigma \Bb\quad{\rm with}~0<\sigma<2\quad{\rm and}~\Bb\in C^1(\overline{\Omega})\,.
\end{equation}
Such competitions have been the object of investigations in the context of waveguides with Dirichlet boundary condition (see \cite{KR14}).

Our main results are summarized in the following theorems.
\begin{thm}\label{thm:ev}
Assume that \eqref{eq:b} holds. Then, as $\gamma\to+\infty$, the principal eigenvalue satisfies
\[\lambda(\gamma,\bb)=
-\gamma^2+\mathcal E(\gamma,\bb)+  o(\gamma^{\sigma})\,,
\]
where
\[\mathcal E(\gamma,\bb)=\min_{x\in\partial\Omega}\Big(|\bb\cdot\nf(x)|-  2 \kappa _\Omega(x)\gamma\Big)\,.\]
\end{thm}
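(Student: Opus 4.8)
The plan is to establish the two matching bounds $\lambda(\gamma,\bb)\le -\gamma^2+\mathcal{E}(\gamma,\bb)+o(\gamma^\sigma)$ and $\lambda(\gamma,\bb)\ge -\gamma^2+\mathcal{E}(\gamma,\bb)+o(\gamma^\sigma)$ via a localized analysis near the boundary, using boundary coordinates and a partition of unity. First I would recall that, by Agmon-type estimates (already available from \eqref{eq:lambda-b=0} and the exponential decay away from $\Gamma$), any ground state is concentrated in a layer of width $\mathcal{O}(\gamma^{-1})$ near $\partial\Omega$; after rescaling the normal variable $t\mapsto \gamma^{-1}t$, the model in the transverse direction is the half-line Robin Laplacian $-\partial_t^2$ on $(0,\infty)$ with condition $u'(0)=u(0)$, whose ground state is $e^{-t}$ with eigenvalue $-1$, producing the leading term $-\gamma^2$. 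The subleading corrections come from (i) the mean-curvature term, already identified in \cite{PP1} as $-2\gamma\kappa_\Omega(x)$, and (ii) the magnetic term, which after rescaling contributes at order $\gamma^\sigma$ only through the component of $\bb$ normal to the boundary --- this is because, near a boundary point, the tangential components of the magnetic potential can be gauged away to leading order in the thin layer, while the normal component $\bb\cdot\nf$ survives as the flux through the tangent plane and generates, in the two tangential variables, a de Gennes--type model operator $(-i\nabla_\tau + \text{linear})^2$ on $\R^2$ (not a half-space), whose bottom of spectrum is exactly $|\bb\cdot\nf(x)|$.

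For the \textbf{upper bound}, I would fix a point $x_0\in\partial\Omega$ realizing the minimum defining $\mathcal{E}(\gamma,\bb)$, choose adapted boundary coordinates $(s_1,s_2,t)$ centered at $x_0$ (with $t$ the normal distance), and build a quasimode of the product form $\gamma^{1/2}\,e^{-\gamma t}\,\chi(s,t)\,\phi_\gamma(s)\,e^{i\gamma\varphi}$, where $\chi$ is a cutoff on a scale $\gamma^{-\rho}$ with $1/2<\rho$ chosen appropriately, $\varphi$ is the gauge phase killing the tangential potential at $x_0$, and $\phi_\gamma$ is (a rescaled cutoff of) the lowest Landau-type eigenfunction for the effective 2D magnetic operator with field strength $|\bb\cdot\nf(x_0)|$. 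Inserting this into the quadratic form $\mathcal{Q}^\ab_\gamma$ and expanding the metric, the curvature terms, and the magnetic potential to the needed order in $\gamma^{-1}$, one checks the Rayleigh quotient equals $-\gamma^2 + |\bb\cdot\nf(x_0)| - 2\gamma\kappa_\Omega(x_0) + o(\gamma^\sigma)$; the delicate point here is that a non-localized Landau eigenfunction must be truncated, so one must verify the truncation error (governed by the decay of the Landau ground state and the choice of $\rho$ relative to $\sigma$) is indeed $o(\gamma^\sigma)$.

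For the \textbf{lower bound}, I would use a partition of unity $\sum_j \chi_j^2 = 1$ on $\partial\Omega$ with $\chi_j$ supported in coordinate patches of size $\gamma^{-\rho}$ (plus IMS localization error $\mathcal{O}(\gamma^{2\rho})$, which is $o(\gamma^\sigma)$ for $\rho<\sigma/2$), combined with the normal Agmon localization reducing to the boundary layer. On each patch, one freezes the magnetic field and the curvature at the center $x_j$, controls the errors by the $C^1$-regularity of $\Bb$ and smoothness of $\Gamma$, and bounds the localized form from below using: the half-line Robin model in $t$ (giving $-\gamma^2 - 2\gamma\kappa_\Omega(x_j) + \ldots$), and the 2D magnetic lower bound $\mathcal{Q}\ge |\bb\cdot\nf(x_j)|\,\|\cdot\|^2$ in the tangential variables (the spectral gap of the planar magnetic Laplacian). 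Taking the infimum over $j$ and sending the patch size to zero yields $\min_{x\in\partial\Omega}(|\bb\cdot\nf(x)| - 2\gamma\kappa_\Omega(x)) + o(\gamma^\sigma)$.

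The \textbf{main obstacle} I anticipate is the simultaneous and correctly ordered treatment of three small/large scales --- the boundary-layer width $\gamma^{-1}$, the magnetic length $\gamma^{-\sigma/2}$, and the localization scale $\gamma^{-\rho}$ --- in particular verifying that in the transverse-rescaled coordinates the magnetic potential's tangential components really do contribute at lower order than $\gamma^\sigma$ (so that only $\bb\cdot\nf$ enters the effective operator) while the curvature term $-2\gamma\kappa_\Omega$ is correctly extracted from the same expansion; this requires a careful bookkeeping of cross terms in the quadratic form and a judicious choice of $\rho$ depending on where $\sigma$ sits in $(0,2)$, together with a gauge transformation that is valid uniformly on each shrinking patch.
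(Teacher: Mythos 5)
Your proposal follows essentially the same route as the paper: pass to the semiclassical variable $h=\gamma^{-\sigma}$, work in boundary coordinates, split the quadratic form into a transverse part (the weighted 1D Robin model on $(0,h^\rho)$ whose ground energy gives $-\gamma^2-2\gamma\kappa$) and a tangential part (which, after a gauge transformation, linearization of the potential, and a partial Fourier transform, reduces to a shifted harmonic oscillator whose bottom is $|\bb\cdot\nf|$ --- your ``planar Landau lower bound''), combine patches by an IMS partition of unity for the lower bound, and build a Gaussian-in-$y_1$ quasimode for the upper bound. A few slips worth noting: the Robin model condition is $u'(0)=-u(0)$; the quasimode phase should scale as $e^{iw/h}=e^{i\gamma^\sigma w}$ rather than $e^{i\gamma\varphi}$; and the localization scale must satisfy both $\gamma^{-\rho}\gg\gamma^{-\sigma/2}$ (so the Landau/Gaussian profile fits inside the patch) and $\gamma^{-\rho}\gg\gamma^{-1}$, forcing $\rho<\sigma/2$ (the paper takes $h^{2/5}=\gamma^{-2\sigma/5}$), so your stated constraint $\rho>1/2$ has the inequality reversed for $\sigma<5/4$; these are adjustments rather than gaps in the argument.
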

\begin{rem}
This estimate in Theorem \ref{thm:ev} is also true for all the first eigenvalues.	
\end{rem}
\begin{rem}\label{rem:thm-ev}
The asymptotic result in Theorem~\ref{thm:ev} displays three regimes:
\begin{enumerate}[\rm (i)]
\item If $\sigma<1$, the magnetic field contribution is of lower order compared to that of the curvature, so the asymptotics in Theorem~\ref{thm:ev} reads 
\[ \lambda(\gamma,\bb)=-\gamma^2  - 2  \gamma\Big(\max_{x\in\partial\Omega} \kappa _\Omega(x)\Big)+o(\gamma)\,.\]
\item If $\sigma=1$, $\bb=\gamma \Bb$, the contributions of the magnetic field and the curvature are of the same order, namely
\[\lambda(\gamma,\bb)=
-\gamma^2+\gamma \min_{x\in\partial\Omega}\Big(|\Bb\cdot\nf(x)|-  2 \kappa _\Omega(x)\Big)+o\big(\gamma\big)\,.
\]
\item If $1<\sigma<2$, the contribution of the magnetic field is dominant compared to that of the curvature, so 
\[\lambda(\gamma,\bb)=
-\gamma^2+\gamma^\sigma  \min_{x\in\partial\Omega}|\Bb\cdot\nf(x)|+o\big(\gamma^\sigma\big)\,.
\]
\end{enumerate}
\end{rem}
Let us focus on the critical regime when $\sigma=1$. Under generic assumptions, an accurate (semiclassical) analysis of the first eigenvalues (establishing their simplicity) can be performed.

\begin{thm}\label{thm:ev-c}
Consider the regime $\sigma=1$ in \eqref{eq:b}. Assume that 
\[\partial\Omega\ni x\mapsto |\Bb\cdot\nf(x)|-  2\kappa _\Omega(x)\] 
has a unique and non-degenerate minimum, denoted by $x_0$  and that  
\begin{equation}\label{eq.gencond}
\Bb\cdot\nf(x_0)\not=0\,.
\end{equation}
Then, there exist $c_0>0$ and $c_1\in\mathbb{R}$ such that, for all $n\geq 1$,
\[\lambda_n(\gamma,\mathbf{b})=-\gamma^2+\gamma \left(|\Bb\cdot\nf(x_0)|-  2 \kappa _\Omega(x_0)\right)+(2n-1)c_0+c_1+\mathcal{O}(\gamma^{-\frac 12})\,.\]
Moreover, we have
\[c_0=\frac{\sqrt{\det(\mathrm{Hess}_{x_0}(|\mathbf{B}\cdot\mathbf{n}|-2\kappa_{\Omega} ))}}{2|\mathbf{B}\cdot\mathbf{n}(x_0)|} \,.\]
\end{thm}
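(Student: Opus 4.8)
The plan is to establish Theorem~\ref{thm:ev-c} by a semiclassical reduction to an effective operator on the boundary, following the strategy already used to prove Theorem~\ref{thm:ev}, but now pushing the analysis one order further. Setting $h=\gamma^{-1}$, the Robin condition becomes a semiclassical boundary condition, and after rescaling by $\gamma^2$ the operator $\mathcal P_\gamma+\gamma^2$ becomes a semiclassical operator whose lowest eigenvalues are of order $\gamma$. The first step is to introduce boundary coordinates $(s,t)$ near $\Gamma$, where $s\in\Gamma$ and $t=\mathsf{dist}(\cdot,\Gamma)$, and to write the operator in these coordinates; the normal variable $t$ lives at scale $\gamma^{-1}$ because the Agmon estimates from the proof of Theorem~\ref{thm:ev} localize the eigenfunctions in a $\gamma^{-1}$-tube around $\Gamma$. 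Performing the change of variable $t=\gamma^{-1}\tau$ and expanding the metric and the magnetic potential, the transverse part of the operator is, to leading order, the model Robin operator $-\partial_\tau^2$ on $\RR_+$ with the boundary condition $\partial_\tau u=u$ at $\tau=0$, whose ground state is $e^{-\tau}$ with eigenvalue $-1$ (this is the source of the $-\gamma^2$ term).

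The second step is the construction of the effective operator by the Grushin/Feshmann–Schur (or Born–Oppenheimer) method: one projects onto the transverse ground state $e^{-\tau}$ and computes the effective operator on $L^2(\Gamma)$ up to the required order. The leading symbol of this effective operator, after the transverse integration, is $|\Bb\cdot\nf(s)|-2\kappa_\Omega(s)$ at scale $\gamma$ — this reproduces $\mathcal E(\gamma,\bb)$ — and the next term is a genuine second-order differential operator on $\Gamma$ of size $1$. The tangential variable $s$ then concentrates near the minimum $x_0$ at scale $\gamma^{-1/2}$, so one should further rescale $s-x_0=\gamma^{-1/2}\xi$ and perform a harmonic approximation. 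The condition \eqref{eq.gencond} guarantees that near $x_0$ the function $|\Bb\cdot\nf|$ is smooth (no absolute-value singularity), so the effective symbol is smooth and the Hessian $\mathrm{Hess}_{x_0}(|\Bb\cdot\nf|-2\kappa_\Omega)$ makes sense; non-degeneracy makes it positive definite. The harmonic oscillator obtained this way is, after identifying the quadratic form in the tangential momenta coming from $(-i\nabla+\ab)^2$ with the metric on $\Gamma$, a $2$-dimensional anisotropic oscillator whose eigenvalues are $(2n-1)c_0+c_1+O(\gamma^{-1/2})$ with $c_0$ the square root of the determinant ratio and $c_1$ a subprincipal correction (coming from curvature of $\Gamma$, from the cross terms in the magnetic potential, and from the next-order term in the transverse reduction).

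Concretely, the steps I would carry out are: (1) localize via Agmon estimates (reusing the proof of Theorem~\ref{thm:ev}) and reduce to a neighborhood of $x_0$; (2) introduce adapted boundary coordinates and a gauge transformation putting $\ab$ in a normal form at $x_0$, then expand the operator in powers of $\gamma^{-1/2}$; (3) apply the Feshbach–Schur reduction onto $e^{-\tau}$ to get an effective operator $N_\gamma$ on $L^2(\Gamma)$ with principal symbol $\gamma\big(|\Bb\cdot\nf|-2\kappa_\Omega\big)$ plus an explicit order-one second-order operator; (4) perform the harmonic approximation of $N_\gamma$ at $x_0$, identify the resulting quadratic oscillator, and read off $c_0$ and $c_1$; (5) match lower and upper bounds and prove the spectral gap, hence simplicity of each $\lambda_n(\gamma,\bb)$. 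For the upper bound one uses quasimodes of the form $e^{-\tau}\times(\text{Hermite function in }\xi)$, suitably corrected; for the lower bound one uses the Feshbach–Schur a priori estimates together with the fact that the remainder operators are $O(\gamma^{-1/2})$ relative to the effective operator.

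The main obstacle I anticipate is the bookkeeping in step (3)–(4): one must expand the full operator (metric, normal derivative weights, magnetic vector potential) to second order in the joint scaling $t\sim\gamma^{-1}$, $s-x_0\sim\gamma^{-1/2}$, and carry the Feshbach–Schur reduction \emph{with error control of size $O(\gamma^{-1/2})$} through that expansion, so that the constant $c_1$ is obtained correctly and, more importantly, so that the error is genuinely $O(\gamma^{-1/2})$ rather than $O(1)$. In particular one must check that the transverse ground state $e^{-\tau}$ does not couple, at the relevant order, to transverse excited states in a way that would spoil the oscillator picture; this is where the special structure of the $1$D Robin model (its spectral gap away from $-1$) and the precise form of the coupling terms (linear in $\tau$ with coefficients vanishing or small at $x_0$) are essential. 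The computation of $c_0$ itself is then a clean consequence of diagonalizing the $2\times2$ harmonic oscillator, using that $|\Bb\cdot\nf(x_0)|$ appears as the coefficient of the kinetic part coming from the tangential component of the magnetic field, which explains its appearance in the denominator of the stated formula for $c_0$.
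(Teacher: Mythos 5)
Your steps (1)--(3) are in line with what the paper actually does: Agmon localization near $\partial\Omega$ and near the minimizing point (Proposition~\ref{prop:dec-c-r}), boundary coordinates with a gauge making $\tilde A_3=0$, a rescaling $t=h^{-1}y_3$, and a Feshbach--Schur/Grushin-type projection onto the transverse ground state $f_{h,y'}$ yielding an effective boundary operator (\eqref{eq:eff-op-2D}, Propositions~\ref{prop.eff} and~\ref{prop.eff-eff0}). The observation that \eqref{eq.gencond} removes the absolute-value singularity of $|\Bb\cdot\nf|$, and that one must control the coupling to transverse excited states via the spectral gap of the $1$D Robin model, are also correctly identified and correspond to Proposition~\ref{prop.mix}.

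Where the proposal breaks down is step (4), the spectral analysis of the effective operator. You treat the effective operator on $L^2(\Gamma)$ as a Schr\"odinger operator with scalar potential $\gamma\bigl(|\Bb\cdot\nf|-2\kappa_\Omega\bigr)$ plus a second-order differential operator, so that rescaling $s-x_0=\gamma^{-1/2}\xi$ yields a $2$D anisotropic harmonic oscillator. This picture is incorrect: the effective operator the reduction produces (see \eqref{eq:eff-op-2D}--\eqref{eq:3.21} and~\eqref{eq:3.22}) is a genuine $2$D \emph{magnetic} semiclassical operator of the form $\sum_{k\ell}P_\ell\alpha_{k\ell}P_k-2h\kappa+\mathcal{O}(h^2)$ with $P_k=-ih\partial_k-\tilde A_k^0$, whose principal symbol $\|p-A(q)\|^2_\alpha$ vanishes on the whole $2$-dimensional characteristic manifold $\Sigma=\{p=A(q)\}$, not at a point. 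The quantity $|\Bb\cdot\nf|-2\kappa$ that you interpret as ``the leading symbol'' is in fact the lowest Landau level of this $2$D magnetic operator, an emergent quantity, not a term in the symbol, and the standard harmonic approximation at a nondegenerate minimum of a potential does not apply. Two immediate consequences signal the problem: a $2$D anisotropic oscillator has spectrum of the form $(2n_1-1)\omega_1+(2n_2-1)\omega_2$ (a lattice in two indices), not the arithmetic progression $(2n-1)c_0$ claimed in Theorem~\ref{thm:ev-c}; and the appearance of $|\Bb\cdot\nf(x_0)|$ in the \emph{denominator} of $c_0$ is not explained by any ``kinetic coefficient'' in a $2$D oscillator. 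What the paper does instead (Theorem~\ref{thm:splitting-eff}, following Raymond--Vu Ngoc \cite{RVN15}) is construct a symplectomorphism and a semiclassical Birkhoff normal form that puts the $2$D magnetic symbol into the form $B\sqrt{\det\alpha}\,|z_1|^2+\mathcal{O}(|z_1|^3)$ near $\Sigma$; this reduces the problem to a \emph{one}-dimensional pseudodifferential operator in the ``slow'' phase-space variables $(x_2,\xi_2)$ on $\Sigma$, whose principal symbol is exactly $B\sqrt{\det\alpha}-2\kappa=|\Bb\cdot\nf|-2\kappa$. The harmonic approximation of this $1$D operator at the nondegenerate minimum, taking into account the Jacobian of the Darboux change of coordinates on $\Sigma$ (where the symplectic form is $B\,dq_1\wedge dq_2$), is what produces $c_0=\sqrt{\det\mathrm{Hess}_{x_0}(|\Bb\cdot\nf|-2\kappa)}\big/\bigl(2|\Bb\cdot\nf(x_0)|\bigr)$. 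Without this normal-form step your plan has no way to turn the $2$D degenerate magnetic Laplacian into a well-understood model, and the formula for $c_0$ --- in particular the $2|\Bb\cdot\nf(x_0)|$ in the denominator, which is the effective Planck constant on the characteristic manifold --- would not be reachable.
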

\begin{rem}
Note that our assumption on the uniqueness of the minimum of the effective potential can be relaxed. Our strategy can deal with a finite number of non-degenerate minima. 
\end{rem}

Theorem~\ref{thm:ev-c} does not cover the situation of a uniform magnetic field and  constant curvature, since \eqref{eq.gencond} is not satisfied. Theorem~\ref{thm:ball-c-int} covers this situation, which displays a similar behavior to the one observed in \cite{HM3d, FP-ball}.    The contribution of the magnetic field  is related to  the ground state energy of the Montgomery  model \cite{Mont}
\[\nu_0:=\inf_{\zeta\in\R}\lambda(\zeta)\,,\]
 where
\[\lambda(\zeta)=\inf_{u\not=0}\int_\R\Big(|u'(s)|^2+\Big(\zeta+\frac{s^2}{2}\Big)^2|u(s)|^2 \Big)\,\dd s
\]

\begin{thm}\label{thm:ball-c-int}
Assume that $b>0$, $\Omega=\{x\in\R^3~:~|x|<1\}$ and the magnetic field is uniform and given by
\[
\bb=(0,0,\gamma b)\,.
\]
Then, as $\gamma\to+\infty$, the eigenvalue in \eqref{eq:p-ev} satisfies
\[\lambda(\gamma,\bb)=-\gamma^2-2\gamma+ \nu_0b^{4/3} \gamma^{2/3}+o(\gamma^{2/3})\,.\]
\end{thm}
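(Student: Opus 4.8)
\textbf{Proof proposal for Theorem~\ref{thm:ball-c-int}.}

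The plan is to exploit the high symmetry of the ball together with the rotational symmetry of the uniform field $\bb = (0,0,\gamma b)$ to reduce the three–dimensional problem to a family of one–dimensional operators, and then to perform a semiclassical (boundary-layer) analysis near the two poles $x_\pm = (0,0,\pm 1)$ of the sphere, which is precisely the set where $|\bb\cdot\nf| = \gamma b$ is \emph{minimal} (it vanishes there). Here the generic condition \eqref{eq.gencond} fails, so one does not get the $(2n-1)c_0$ harmonic-oscillator correction of Theorem~\ref{thm:ev-c}; instead the tangential variation of $|\bb\cdot\nf(x)| = \gamma b |x_3|$ near a pole is \emph{quadratic but linearly degenerate in the appropriate scaled variable}, and this is exactly what produces the Montgomery operator. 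Concretely: write $x_3 = \cos\theta$ near the north pole, so $|\bb\cdot\nf| = \gamma b\,|\cos\theta| \approx \gamma b(1 - \tfrac{\theta^2}{2})$; subtracting the constant $\gamma b$ and combining with the curvature term $-2\gamma$ (the sphere has constant mean curvature $1$) we see the effective tangential potential behaves like $-\tfrac{\gamma b}{2}\theta^2$, degenerate at $\theta = 0$. Rescaling $\theta$ at the natural length scale and rescaling the normal boundary-layer variable, the $\theta^2$-times-(something linear) structure assembles into the quartic potential $(\zeta + s^2/2)^2$ defining $\lambda(\zeta)$, and optimizing over the fiber parameter (the analogue of the momentum dual to the azimuthal angle, which plays the role of $\zeta$) yields $\nu_0$.

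The key steps, in order. First, separation of variables: in spherical-type coordinates adapted to the axis of $\bb$, with gauge $\ab = \tfrac{\gamma b}{2}(-x_2,x_1,0)$, the operator $\mathcal P_\gamma$ commutes with rotations about the $x_3$-axis, so it decomposes as a direct sum over the azimuthal quantum number $m\in\ZZ$ of operators $\mathcal P_\gamma^{(m)}$ on a half-disk-type domain in the $(r,x_3)$ (or $(\rho,\theta)$) variables with a Robin condition at $|x|=1$. Second, \emph{a priori} localization: by \eqref{eq:lambda-b=0} we already know $\lambda(\gamma,\bb) = -\gamma^2 - 2\gamma + \mathcal O(\gamma^{2/3})$ (the upper bound \eqref{eq:PP-b=9} combined with \eqref{as1}; note $\|\ab\|_{L^\infty}^2 = \mathcal O(\gamma^2)$ is too crude here, so one instead uses a genuine magnetic upper bound by a localized test function, see below), and Agmon-type estimates force the ground state to concentrate, in the normal direction, in a $\gamma^{-1}$ boundary layer near $\Gamma$, and tangentially near the poles $x_\pm$. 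Third, the effective model: after the change of variables $(s,t)\mapsto$ (scaled tangential distance to the pole, scaled normal distance to the boundary), with the correct powers of $\gamma$ — tangential scale $\gamma^{-1/3}b^{-1/3}$ and normal scale $\gamma^{-1}$ — and conjugation by the appropriate phase, the rescaled operator converges to (the transversal Robin-ground-state energy $-\gamma^2 - 2\gamma$ plus) $\gamma^{2/3}b^{4/3}$ times the Montgomery operator $-\partial_s^2 + (\zeta + s^2/2)^2$ fibered over $\zeta\in\RR$; its bottom is $\nu_0 b^{4/3}\gamma^{2/3}$. Fourth, matching upper and lower bounds: the upper bound comes from a quasimode built as (Robin transverse profile in $t$) $\times$ (Montgomery ground state in $s$) $\times$ (oscillatory factor in the azimuthal variable), localized by a cutoff near $x_+$; the lower bound comes from a partition of unity separating a neighborhood of the poles (where the model above gives $-\gamma^2-2\gamma+\nu_0 b^{4/3}\gamma^{2/3}(1+o(1))$) from its complement (where $|\bb\cdot\nf|\geq c\gamma$ gives a strictly larger lower bound, by the analysis behind Theorem~\ref{thm:ev}), together with IMS localization whose error is $o(\gamma^{2/3})$ given the length scales above.

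The main obstacle is the \emph{transverse} analysis: unlike the non-magnetic Pankrashkin–Popoff situation where the one-dimensional Robin model on a half-line gives exactly $-\gamma^2 - 2\gamma\kappa$, here one must carry the magnetic term through the boundary-layer rescaling and check that, at the poles, its leading effect is not $\mathcal O(\gamma)$ (which would spoil the $\gamma^{2/3}$ expansion) but genuinely enters at order $\gamma^{2/3}$ via the Montgomery structure. This requires a careful expansion of the metric and of $\ab$ in tubular-neighborhood coordinates near $x_\pm$ to second order, keeping track of how the vanishing of $\bb\cdot\nf$ interacts with the curvature of $\Gamma$ along the axis, and a uniform (in the fiber parameter $\zeta$, i.e. in $m$) control of the remainders so that the reduction to $\inf_\zeta \lambda(\zeta) = \nu_0$ is legitimate. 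A secondary technical point is producing a sharp enough magnetic upper bound to feed the Agmon estimates — crude bound \eqref{eq:PP-b=9} is insufficient since $\bb\sim\gamma$ — which is handled by testing $\mathcal Q^\ab_\gamma$ on a function concentrated at a pole where the vector potential is small, giving $\lambda(\gamma,\bb)\leq -\gamma^2-2\gamma + \mathcal O(\gamma^{2/3})$ directly.
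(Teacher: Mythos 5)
Your high-level architecture — Agmon localization to a one-dimensional set on the sphere, Fourier decomposition in the azimuthal angle, boundary-layer rescaling in the normal variable, reduction to the Montgomery family $\lambda(\zeta)$ fibered over an azimuthal momentum, and optimization $\inf_\zeta\lambda(\zeta)=\nu_0$ — is exactly the paper's route in Appendix~\ref{sec:ball}. But there is a geometric error that undermines the whole setup: you localize near the poles $x_\pm=(0,0,\pm 1)$, claiming that $|\bb\cdot\nf|$ \enquote{is minimal (it vanishes there)}. It does not. With $\nf(x)=x$ on the unit sphere and $\bb=(0,0,\gamma b)$, one has $\bb\cdot\nf=\gamma b\,x_3$, so $|\bb\cdot\nf|$ is \emph{maximal} (equal to $\gamma b$) at the poles and \emph{vanishes} on the equator $S=\{x_3=0,\ x_1^2+x_2^2=1\}$. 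Since the mean curvature of the sphere is constant, the effective tangential potential $|\Bb\cdot\nf|-2\kappa$ from Theorem~\ref{thm:ev} is minimized on $S$, and Proposition~\ref{prop:dec-ball} proves the ground states concentrate there, not at the poles. Your own expansion $|\cos\theta|\approx 1-\tfrac{\theta^2}{2}$ near $\theta=0$ gives a \emph{concave} effective potential $-\tfrac{\gamma b}{2}\theta^2$ with a \emph{maximum} at the pole, which would repel, not trap, the ground state.

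The error propagates into the derivation of the Montgomery model. The quartic potential does not arise from the quadratic variation of $|\Bb\cdot\nf|$ near a pole: in spherical coordinates, after Fourier decomposition in $\varphi$, the magnetic momentum mismatch is $\tfrac{1}{\sin^2\theta}\bigl(mh-\tfrac{b}{2}r^2\sin\theta\bigr)^2$, and it is the quadratic maximum of $\sin\theta$ at $\theta=\tfrac\pi2$, namely $\sin\theta\approx 1-\tfrac12(\theta-\tfrac\pi2)^2$, that after the rescaling $s\propto h^{-1/3}(\theta-\tfrac\pi2)$, with $\zeta$ essentially $\tfrac{2mh}{b}-1$, produces $(\zeta+\tfrac{s^2}{2})^2$. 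By contrast $|\Bb\cdot\nf|=b|\cos\theta|$ vanishes only \emph{linearly} at the equator and has a quadratic \emph{maximum} at the poles, so no Montgomery structure is available where you localize. The other ingredients you list (normal scale $\gamma^{-1}$, tangential scale $\gamma^{-1/3}$, the need to sharpen the upper bound beyond \eqref{eq:PP-b=9}, IMS errors of order $o(\gamma^{2/3})$) are consistent with the paper, but for the argument to close, the analysis must be carried out near the equator $S$, not the poles.
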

\begin{rem}
We can expect that the expansion \enquote{of the form} given in Theorem \ref{thm:ball-c-int} is also true for a generic domain $\Omega$ when \eqref{eq.gencond} is not satisfied.	
\end{rem}

Comparing our results with their 2D counterparts \cite{K-dia, KS}, we observe in the 3D situation an effect due to  the \emph{magnetic geometry} which is not visible  in the 2D setting.  It can be explained as follows. The 2D case results from a cylindrical 3D domain with axis parallel to the magnetic field, in which case the term $\Bb\cdot\nf$ vanishes and the magnetic correction term will be of lower order compared to what we see in Theorem~\ref{thm:ev}.

\subsection{Structure of the paper}
 The paper is organized as follows. In Section~\ref{sec.2}, we introduce an effective semiclassical parameter, introduce auxiliary operators and eventually  prove Theorem~\ref{thm:ev}. In Section~\ref{sec:efop}, we derive an effective operator and then in Section~\ref{sec:efop*} we estimate the low-lying eigenvalues  for the effective operator, thereby proving Theorem~\ref{thm:ev-c}. Finally, in Appendix \ref{sec:ball}, we analyze  the case of the ball domain in the uniform magnetic field case and prove Theorem~\ref{thm:ball-c-int}. We also discuss in this appendix $\gamma$-independent  uniform fields (which amounts to considering the case $\sigma=0$ in \eqref{eq:b}).

\section{Proof of Theorem~\ref{thm:ev}}\label{sec.2}

\subsection{Effective operators}

\subsubsection{Effective 1D Robin Laplacian}\label{sec:ef-1D}~\\
 We fix three constants\footnote{ The constant $C_*$ depends on the local geometry of $\partial\Omega$ near some point $x_*\in\partial\Omega$, see \eqref{eq:jac}. By compactness of the boundary, $C_*$ can be selected independently of the choice of the boundary point $x_*$.}  $C_*>0$, 
 $\sigma\in(0,2)$, and $h>0$ (the so-called semiclassical parameter). We set
\begin{equation}\label{eq:delta}
\delta=h^{\rho-\frac1\sigma} \quad{\rm with }~0<\rho<\frac12\,.
\end{equation}
For every $x_*\in\partial\Omega$, we introduce the effective \emph{transverse} operator 
\begin{equation}\label{eq:1D-op}
L_*:=-h^2\big(w_*(t)\big)^{-1}\frac{\dd}{\dd t}\left(w_*(t)\frac{\dd}{\dd t}\right)
\end{equation}
in the weighted Hilbert space $L^2\Big((0,h^\rho);w_*\dd t\Big)$,   where
\[w_*(t)=1-2\kappa (x_*)t-C_*t^2\,,\]
and the domain of $L_*$ is
\[D(L_*)=\{u\in H^2(0,h^\rho)~:~u'(0)=-h^{-\frac1\sigma}u(0)~\&~u(h^\rho)=0\}\,.\]
The change of variable, $\tau=h^{-\frac1\sigma}t$, yields the new operator
\begin{equation}\label{eq:1D-opa}
\widetilde L_*:=-h^{2-\frac{2}\sigma}\big(w_{*,h}(\tau)\big)^{-1}\frac{\dd}{\dd\tau}\left(w_{*,h}(\tau)\frac{\dd}{\dd\tau}\right)
\end{equation}
with domain
\[ D(\widetilde L_*)=\{u\in H^2(0,\delta)~:~u'(0)=-u(0)~\&~u(\delta)=0\}\,.\]
The new weight $w_{*,h}$ is defined as follows
\[w_{*,h}(\tau)=1-2\kappa (x_*)h^{\frac{1}\sigma} \tau-C_*h^{\frac2\sigma}\tau^2\,.\]
Using \cite[Sec.~4.3]{HK-tams}, we get that the first eigenvalue of the operator $L_*$ satisfies, as $h\to0_+$,
\begin{equation}\label{eq:1D-eff-n}
\lambda(L_*)=h^{2-\frac{2}\sigma}\lambda(\widetilde L_*)=
-h^{2-\frac{2}\sigma}-2\kappa (x_*)h^{2-\frac1\sigma}+\mathcal O(h^{2})\,.
\end{equation}

\subsubsection{Effective harmonic oscillator}~\\
We also need the family of harmonic oscillators in $L^2(\R)$,
\begin{equation}\label{eq:h-osc}
T_{m,\xi}^{h,\eta}:=(-ih\partial_s-m)^2+(\xi+m+\eta s)^2\,,
\end{equation}
where $(m,\xi,\eta)$ are parameters.

By a gauge transformation  and a translation (when $\eta\not=0$), we observe that the first eigenvalue of $T_{m,\xi}^{h,\eta}$ 
is independent of $(m,\xi)$. By rescaling, and using the usual harmonic oscillator, we see that the first eigenvalue is given by\footnote{We will use the inequality $\lambda \big(T_{m,\xi}^{h,\eta}\big)\geq |\eta| h$ (which is obvious when $\eta=0$).}
\begin{equation}\label{eq:h-osc*}
\lambda \big(T_{m,\xi}^{h,\eta}\big)=|\eta| h \,.
\end{equation}

\subsubsection{Effective semiclassical parameter}~\\
In our context,  the semiclassical parameter will be 
\begin{equation}\label{eq:h}
h=\gamma^{-\sigma}\quad{\rm with }~0<\sigma<2\,.
\end{equation}
Under the assumption in \eqref{eq:b}, the quadratic form in \eqref{QF-Gen} is expressed as follows
\[
Q_\gamma^\ab(u)=h^{-2} q_h(u)\,,
\]
where
\begin{equation}\label{eq:qh}
q_h(u)=\int_\Omega|(-ih\nabla+\Ab)u|^2\dd x-h^\alpha\int_{\partial\Omega}|u|^2\dd x\,,
\end{equation}
$\curl\Ab=\Bb$ is a fixed vector field, and 
\begin{equation}\label{eq:alpha}
\alpha=\alpha(\sigma):=2-\frac1\sigma\in  \left(-\infty,\frac 32 \right)\,.
\end{equation}
We introduce the eigenvalue
\begin{equation}\label{eq:mu-h}
\mu(h,\Bb)=\inf_{u\in H^1(\Omega)\setminus\{0\}}\frac{q_h(u)}{\|u\|_{L^2(\Omega)}^2}\,.
\end{equation}
Then we have the relation
\begin{equation}\label{eq:ev-g-h}
h=\gamma^{-\sigma}\quad{\rm and}\quad \lambda(\gamma,\bb)=h^{-2}\mu(h,\Bb) \,.
\end{equation}

~\\
\subsection{Local boundary coordinates}
 We follow the presentation in \cite{HM3d}.
\subsubsection{The coordinates}~\\
We fix $\epsilon>0$ such that the distance function
\begin{equation}\label{eq:t(x)}
t(x)={\rm dist}(x,\partial\Omega)
\end{equation}
is smooth in $\Omega_\epsilon:=\{{\rm dist}(x,\Omega)<\epsilon\}$.

Let $x_0\in\partial\Omega$ and choose a chart  $\Phi:V_0\to \Phi(V_0)\subset\partial\Omega$ such that $x_0\in\Phi(V_0)$ and $V_0$ is an open subset of $\R^2$. We set  
\begin{equation}\label{eq:y0,W}
y_0= \Phi^{-1}(x_0)\quad{\rm and } \quad W_0=\Phi(V_0)\,.
\end{equation} 
We denote by 
\[G=\sum_{1\leq i,j\leq 2}G_{ij}\, dy_i\otimes dy_j\]
 the metric  on the surface $W_0$ induced by the Euclidean metric, namely $G=(\dd\Phi)^{\mathrm{T}} \dd\Phi$. After a dilation and a translation of the $y$ coordinates, we may assume that
\begin{equation}\label{eq:y0,W*}
y_0=0\quad{\rm and}\quad G_{ij}(y_0)=\delta_{ij}\,.
\end{equation}
We introduce the new coordinates $(y_1,y_2,y_3)$   as follows
\[\tilde\Phi: (y_1,y_2,y_3)\in V_0\times(0,\epsilon)\mapsto \Phi(y_1,y_2)-t\, \nf\big(\Phi(y_1,y_2) \big)\,,\]
and we set
\begin{equation}
U_0=\tilde\Phi\big( V_0\times(0,\varepsilon)\big)\subset\Omega\,.
\end{equation}
Note that $y_3$ denotes the normal variable in the sense that for a point $x\in U_0$ such that $(y_1,y_2,y_3)=\tilde\Phi^{-1}(x)$, we have  $y_3=t(x)$ as introduced in \eqref{eq:t(x)}. In particular, $y_3=0$ is the equation of the surface $U_0\cap\partial\Omega\,$.

\subsubsection{Mean curvature}
We denote by $K$ and $L$  the second and third fundamental forms on $\partial\Omega$. In the coordinates $(y_1,y_2)$ and with respect to the canonical basis, their matrices are given by
\begin{equation}\label{eq:F-forms}
K=\sum_{1\leq i,j\leq 2}K_{ij}\,dy_i\otimes dy_j
\quad{\rm and}\quad L=\sum_{1\leq i,j\leq 2} L_{ij}\, dy_i\otimes dy_j\,,
\end{equation}
where
\[K_{ij}=\left\langle\frac{\partial x}{\partial y_i},\frac{\partial\nf}{\partial y_j} \right\rangle
\quad{\rm and}\quad  L_{ij}=\left\langle\frac{\partial \nf}{\partial y_i},\frac{\partial\nf}{\partial y_j} \right\rangle\,.\]
The mean curvature $\kappa $ is then defined as half the trace of the matrix of $G^{-1}K=(k_{ij})_{1\leq i,j\leq 2}$. For $x=\Phi(y_1,y_2)$, we have
\begin{equation}\label{eq:mc-H}
\kappa (x)=\frac12{\rm tr}(G^{-1}K)\Big|_{(y_1,y_2)}=\frac12\big(K_{11}(y_1,y_2)+K_{22}(y_1,y_2)\big)\,.
\end{equation} 
In light of \eqref{eq:y0,W} and \eqref{eq:y0,W*}, we write 
\begin{equation}\label{eq:mc-Ha}
\kappa (x_0)=\frac12\big(K_{11}(0)+K_{22}(0)\big)\,.
\end{equation}

\subsubsection{The metric}~\\
The Euclidean metric  $g_0$ in $\R^3$ is block-diagonal in the new coordinates and takes the form (see \cite[Eq.~(8.26)]{HM3d}) 
\begin{equation}\label{eqg0}
\begin{aligned}
 g_0=(\dd\tilde\Phi)^{\mathrm{T}}\dd\tilde\Phi&=\sum_{1\leq i,j\leq 3}g_{ij}\, dy_i\otimes dy_j\\
&= dy_3\otimes dy_3+\sum_{1\leq i,j\leq 2}\big(G_{ij}(y_1,y_2)-2y_3K_{ij}(y_1,y_2)+y_3^2L_{ij}(y_1,y_2)\big)dy_i\otimes dy_j\,,
\end{aligned}
\end{equation}
where $(K_{ij})$ and $(L_{ij})$ are defined in \eqref{eq:F-forms}.
Our particular choice of the coordinates, together with $G(0)={\rm Id}$ (see \eqref{eq:y0,W*}), yields
\begin{equation}\label{eq:g,ij}
g_{ij}=\begin{cases}
0 &~{\rm if~} (i,j) \in\{(3,1),(3,2),(1,3),(2,3)\}\\
 \delta_{ij} +\mathcal O(|y|)&~{\rm if~} 1\leq i,j\leq 2\\
1&{~\rm if~}i=j=3
\end{cases}
\,.
\end{equation}
The coefficients of $(g^{ij})$, the inverse matrix of $(g_{ij})$, are  then given as follows
\begin{equation}\label{eq:g,ij*}
g^{ij}=\begin{cases}
0 &~{\rm if~} (i,j) \in\{(3,1),(3,2),(1,3),(2,3)\}\\
 \delta_{ij}+\mathcal O(|y|)&~{\rm if~} 1\leq i,j\leq 2\\
1&{~\rm if~}i=j=3
\end{cases}\,.
\end{equation}
 We denote by $g=(g_{ij})$ the matrix of the metric $g_0$ in the $y$ coordinates; the determinant of $g$ is  denoted by $|g|$; we then have
\begin{align*}
|g|^{1/2}&=\Big({\rm det}(G-y_3\,K+y_3^2\,L)\Big)^{1/2}\\
&=\Big({\rm det}(I-y_3\,G^{-1}K+y_3^2\,G^{-1}L)\Big)^{1/2} |G|^{1/2}\\
 &=\big(1-y_3 {\rm tr}(G^{-1}K) +y_3^2 { p_2(y)}\big)|G|^{1/2}\,,
\end{align*}
where  $p_2$ is a bounded function  in the neighborhood $V_0\times[0,\varepsilon]$.\\
In light of \eqref{eq:mc-H}, we infer the following important inequalities which involve the mean curvature $\kappa $,
valid in  $V_0\times[0,\varepsilon]$,
\begin{equation}\label{eq:jac}
\big(1-2y_3 \kappa \big(\Phi(y_1,y_2)\big)-C_* y_3^2\big)|G|^{1/2}\leq 
|g|^{1/2}\leq \big(1-2y_3 \kappa \big(\Phi(y_1,y_2)\big)+C_* y_3^2\big)|G|^{1/2}\,,
\end{equation}
with $C_*$ a constant independent of $y$.

\subsubsection{The magnetic potential}~\\
The reader is referred to \cite[Section 0.1.2.2]{R3d}. We recall that
\[\sigma_\Ab=\sum_{i=1}^3A_i\dd x_i\,,\]
so that, the change of coordinates $x=\tilde\Phi(y)$ gives
\[\tilde\Phi^*\sigma_{\Ab}=\sum_{i=1}^3\tilde A_i\dd y_i\,,\quad \tilde\Ab=(\dd\tilde\Phi)^{\mathrm{T}}\circ\Ab\circ\tilde\Phi(y)\,.\]
The magnetic field $\Bb$ is then defined via the $2$-form
\[\omega_\Bb:=\dd\sigma_\Ab=\sum_{1\leq i,j\leq 3}  b_{ij}\dd x_i\wedge \dd x_j\quad{\rm where}~b_{ij}=\frac{\partial A_j}{\partial x_i}-\frac{\partial A_i}{\partial x_j}\,.\]
The 2-form $\omega_\Bb$ can be viewed as the vector field $\Bb$ given by
\[ \Bb=\sum_{i=1}^3 B_i\frac{\partial}{\partial x_i}~{\rm with~}B_1=b_{23},~B_2=b_{31},~B_3=b_{12}\,,\]
via the Hodge identification
\[\omega_{\Bb}(u,v)=\langle u\times v,\Bb\rangle_{\R^3}\,.\]
We have that \[\tilde\Phi^*\omega_{\Bb}=\dd(\tilde\Phi^*\sigma_{\Ab})=\sum_{1\leq i,j\leq 3}  \left(\frac{\partial \tilde A_j}{\partial y_i}-\frac{\partial \tilde A_i}{\partial y_j}\right)\dd y_i\wedge \dd y_j\,.\]
Considering the magnetic field $\mathcal{B}$ associated with $\tilde\Ab$, this means that
\[\omega_{\Bb}(\dd\tilde\Phi(u),\dd\tilde\Phi(v))=\omega_{\mathcal{B}}(u,v)\,,\quad \mbox{or }\quad\langle\dd\tilde\Phi(u)\times\dd\tilde\Phi(v),\Bb\rangle=\langle u\times v,\mathcal{B}\rangle\,,\]
or, equivalently,
\[\det(\dd\tilde\Phi)\langle u\times v,\dd\tilde\Phi^{-1}(\Bb)\rangle=\langle u\times v,\mathcal{B}\rangle\,,\]
\emph{i.e.},
\begin{equation}\label{eq.tildeBcalB}
\tilde\Bb:=\dd\tilde\Phi^{-1}(\Bb)=\det(\dd\tilde\Phi)^{-1}\mathcal{B}\,.
\end{equation}
Explicitly,
\begin{equation}\label{eq:B-y}
\tilde B_1=|g|^{-1/2}\left(\frac{\partial \tilde A_3}{\partial y_2} -\frac{\partial\tilde A_2}{\partial y_3} \right)\,,~ 
\tilde B_2=|g|^{-1/2}\left(\frac{\partial \tilde A_1}{\partial y_3} -\frac{\partial\tilde A_3}{\partial y_1} \right)\,,~
\tilde B_3=|g|^{-1/2}\left(\frac{\partial \tilde A_2}{\partial y_1} -\frac{\partial\tilde A_1}{\partial y_2} \right)\,,
\end{equation}
and $\tilde\Bb$ is the vector of coordinates of $\Bb$ in the new basis induced by $\tilde\Phi$. We remark for further use that
\begin{equation}\label{eq:Bn}
\Bb\cdot \nf=-\tilde B_3\,.
\end{equation}
We can use a (local) gauge transformation, $\Ab\mapsto \Ab^{\phi}:=\Ab+\nabla \phi$, and obtain that the normal component of $\Ab^\phi$, $\tilde A^\phi_3$, vanishes. We assume henceforth 
\begin{equation}\label{eq:A3=0}
\tilde A_3=0\,.
\end{equation}

\subsubsection{The quadratic form}~\\
For  $u\in H^1(\Omega)$, we introduce the \emph{local} quadratic form 
\begin{equation}\label{eq:loc-qf}
q_h(u;U_0)=\int_{U_0}|(-ih\nabla+\Ab)u|^2\dd x-h^\alpha\int_{U_0\cap\partial\Omega}|u|^2\dd x\,.
\end{equation}
In the new coordinates $y=(y_1,y_2,y_3)$, we express the quadratic form as follows
\begin{multline}\label{defqu}
q_h(u;U_0)=\int_{V_0\times(0,\varepsilon)}|g|^{1/2}\sum_{1\leq i,j\leq 3}g^{ij}(hD_{y_i}+\tilde A_i)\tilde u \overline{(hD_{y_j}+\tilde A_j)\tilde u}\dd y\\
-h^\alpha\int_{V_0}|\tilde u(y_1,y_2,0)|^2|G|^{1/2}\dd y_1\dd y_2
\end{multline}
where $D_{y_i}=-i\frac{\partial}{\partial y_i}$, the coefficients $g^{ij}$ are introduced in \eqref{eq:g,ij*} and  $\tilde u=u\circ \tilde\Phi$. 

\begin{rem}\label{rem:qf-b-c}
The formula in \eqref{defqu} results from the following identity
\begin{equation}\label{eq:grad-y-c}
|(-ih\nabla +\Ab)u|^2=\sum_{i,j=1}^3 g^{ij} (hD_{y_i}+\tilde A_i)\tilde u \overline{(hD_{y_j}+\tilde A_j)\tilde u}\,.
\end{equation}
Now \eqref{defqu} follows. Using \eqref{eq:grad-y-c} for $\Ab=0$  and using \eqref{eq:g,ij*}, we observe that
\[|\nabla u|^2\leq m |\nabla_y \tilde u|^2\,,\]
for a positive constant $m$, which we can choose independently of the point $x_0$, by compactness of $\partial\Omega$. Also, if we denote by $\nabla'$  the gradient on $\partial\Omega$, and if $u$ is independent of the distance to the boundary (i.e. $\partial_{y_3}\tilde u=0$), we get
\begin{align*} 
|\nabla u|^2&= |\nabla'u|^2+ \sum_{i,j=1}^2 \Big(g^{ij}-{g^{ij}}_{/_{y_3=0}}\Big) \partial_{y_i}\,\tilde u\overline{\partial_{y_j}\tilde u}\\
&\leq \big(1+M y_3\big) |\nabla'u|^2\,,
\end{align*} 
where  we used \eqref{eq:g,ij*}, and $M$ is positive constant. 
\end{rem}

We assume that
\[\rho\in\left(0,\frac12\right)\,.\]
This condition appears later in an argument involving a partition of unity, where we encounter an error term of the order $h^{2-2\rho}$ which we require to be $o(h)$ (see \eqref{eq:qh-ims}).

Now we fix some constant $c_0$ so that
\[ \Phi^{-1}\big( B(x_0,2h^\rho)\cap\partial\Omega\big)\subset \{|y|<  c_0 \, h^\rho\} \,.\]
 We infer from \eqref{eq:g,ij*} and \eqref{eq:jac} that  when ${\rm supp}\,\tilde u\subset\{|y|< c_0 h^\rho\}$, 
\begin{equation}\label{eq:loc-qf-lb}
q_h(u)\geq  q_h^{\rm tran}(\tilde u) 
+
(1-Ch^\rho)\, q_h^{\rm surf}(\tilde u)
\end{equation}
where 
\begin{equation}\label{eq:loc-qf-lb1}
 q_h^{\rm tran}(\tilde u) = \int_{V_0}\left(\int_{(0,\varepsilon)} w_*(y)| h \partial_{y_3} \tilde u|^2 \dd y_3-h^\alpha|\tilde u(y_1,y_2,0)|^2\right)|G|^{1/2}\dd y_2\dd y_3\,,
 \end{equation}
\begin{equation}\label{eq:loc-qf-lb2}
q_h^{\rm surf}(\tilde u)=\sum_{i=1}^2
\int_{V_0\times(0,\varepsilon)}|(hD_{y_i}+  \tilde A_i) \tilde u|^2 \dd y\,,
\end{equation}
and 
\begin{equation}\label{eq:loc-qf-lb3}
w_*(y)=1-2y_3\tilde \kappa (y_1,y_2) -C_*y_3^2\quad{\rm with }\quad \tilde \kappa =\kappa \circ \Phi \,.
\end{equation}
Note that we used the Cauchy-Schwarz inequality to write that, if $m_{ij}=\delta_{ij}+\mathcal O(h^\rho)$, then, for some constant $C_2>0$, 
\[(1-C_2h^\rho)\sum_{i=1}^2|d_i|^2\leq  \sum_{1\leq i,j\leq 2}m_{ij}d_id_j\leq (1+C_2h^\rho)\sum_{i=1}^2|d_i|^2\,.\]
\begin{equation}\label{eq:loc-qf-lb*}
q_h^{\rm tran}(\tilde u)\geq \sum_{i=1}^2 \int_{V_0\times (0,\varepsilon)} \Big(-h^{2-\frac{2}\sigma}-2\tilde \kappa (y_1,y_2)\, h^{2-\frac1\sigma}+\mathcal O(h^{2})\Big)|\tilde u|^2 |g|^{1/2}\dd y\,.
\end{equation}
In the sequel, we will estimate the term \eqref{eq:loc-qf-lb2}
\[q^{\rm surf}_h(\tilde u)= \sum_{i=1}^2\int_{\{|y|<c_0h^\rho, y_3>0\}}|(hD_{y_i}+\tilde A_i)\tilde u|^2 dy\,.\]
We write the Taylor  expansion at $0$ of $\tilde A_i$ (for $i=1,2$) to order $1$,
\begin{equation}\label{eq.Taylor2}
\tilde A_i(y)=\tilde A^{\rm lin}_i(y)+\mathcal O(|y|^2)
\end{equation}
where
\[\tilde A_i^{\rm lin}(y)=\tilde A_i(0)+y_1\frac{\partial\tilde A_i}{\partial y_1}(0)+y_2\frac{\partial\tilde A_i}{\partial y_2}(0)+
y_3\frac{\partial\tilde A_i}{\partial y_3}(0)\,.\]
We set $\tilde A^{\rm lin}(y)=\big(\tilde A_1^{\rm lin}(y),\tilde A_2^{\rm lin}(y)\big)$ and observe by \eqref{eq:B-y} that
\begin{equation}\label{eq:A-lin-v1}
\tilde A^{\rm lin}(y)=\Big(-\mathcal{B}_2^0y_3\,,\mathcal{B}_3^0y_1, -\mathcal{B}_1^0y_3 \Big)+\nb_{(y_1,y_2)}w\,,
\end{equation}
\[\mathcal{B}_i^0=\mathcal{B}_i(0)\,,\quad \mathcal{B}=\curl\tilde\Ab\,,\]
where
 \begin{equation}\label{defw}
 \, w(y_1,y_2)=\tilde A_1(0)y_1+\tilde A_2(0) y_2+a_{11}\frac{y_1^2}{2}+a_{12}y_1y_2+a_{22}\frac{y_2^2}{2}
 \end{equation}
  and 
\begin{equation}a_{ij}=\frac{\partial \tilde A_i}{\partial y_j}(0)\,.
\end{equation}
So, after a gauge  transformation, we may assume that
\begin{equation}\label{eq:A-lin-fv}
\tilde A^{\rm lin}(y)=\Big(-\mathcal{B}_2^0y_3\,,\, \mathcal{B}_3^0y_1\,,\,-\mathcal{B}_1^0y_3 \Big)\,.
\end{equation}
Now we estimate from below the quadratic form by Cauchy's inequality and obtain, for all $\zeta\in(0,1)$,
\[q^{\rm surf}_h(\tilde u)\geq  (1-\zeta)\sum_{i=1}^2\int_{{\{|y|<c_0h^\rho,y_3>0\}}}|(hD_{y_i}+\tilde A_i^{\rm lin})\tilde u|^2 \dd y-C\zeta^{-1}h^{4\rho}\int_{{\{|y|<c_0h^\rho,y_3>0\}}}|\tilde u|^2\dd y\,.\]
We do a partial Fourier transformation with respect to the variable $y_2$ and eventually we get
\begin{multline*}
q^{\rm surf}_h(\tilde u)\geq  (1-\zeta)\int_\R\Big(\int_{\{|(y_1,y_2)|<c_0h^\rho\}}\Big(|(hD_{y_1}-\mathcal{B}_2^0y_3)\hat u|^2\\
+|(\xi -\mathcal{B}_1^0 y_3+\mathcal{B}_3^0y_1)\hat u|^2\Big)\dd y_2\dd y_1\Big)\dd\xi
-C\zeta^{-1}h^{4\rho}\int_{{\{|y|<c_0h^\rho,y_3>0\}}}|\tilde u|^2\dd y\,.
\end{multline*}
Using \eqref{eq:h-osc*}, we get 
\begin{equation}\label{eq:q-surf*}
\begin{aligned}
 q^{\rm surf}_h(\tilde u)&\geq (1-\zeta) \int_{\{V_0\times(0,\varepsilon)\}}\Big( |\mathcal{B}_3^0|h-C\zeta^{-1}h^{4\rho}\Big)|\tilde u|^2\dd y\\
&\geq \big(1-C\zeta-Ch^\rho\big) \int_{\{V_0\times(0,\varepsilon)\}}\Big( |\mathcal{B}_3^0||g(0)|^{-\frac 12}h-C\zeta^{-1}h^{4\rho}\Big)|\tilde u|^2 |g|^{1/2} \dd y \,.
 \end{aligned}\end{equation}
We now choose 
\[\zeta=h^{\rho}\quad \mbox{ and }\quad \rho=\frac2{5}  \,.\]
Collecting \eqref{eq:loc-qf-lb*}, \eqref{eq:q-surf*}, \eqref{eq:loc-qf-lb}, \eqref{eq.tildeBcalB}, and \eqref{eq:Bn}, and then returning to the Cartesian coordinates, we get
\begin{equation}\label{eq:loc-qf**}
q_h(u)\geq \int_{\Omega}\Big(-h^{2-\frac{2}\sigma}-2\kappa (x_0)h^{2-\frac1\sigma}+ |\Bb\cdot\nf(x_0)|h-Ch^{6/5}\Big) | u|^2\dd x \,.
\end{equation}
 for $u\in H^1(\Omega)$ with support in a ball $B(x_0,h^{2/5})\cap \overline{\Omega}$. Moreover, using the compactness of $\partial\Omega$, we can choose the constant $C$ in \eqref{eq:loc-qf**} independent of $x_0\in\partial\Omega$.

\begin{rem}\label{rem:qf-ub}
We can write an upper bound of the quadratic form  similar to the lower bound in \eqref{eq:loc-qf-lb}. In fact,  assuming that $u\in H^1(\Omega)$ with ${\rm supp}\,\tilde u\subset \{|y|<c_0h^\rho\}$, then using \eqref{eq:g,ij*} and \eqref{eq:jac}, we get, with the notation in \eqref{eq:loc-qf-lb2},
\begin{equation}\label{eq:loc-qf-ub}
q_h(u)\leq \bar q_h^{\rm tran}(\tilde u)+
(1+Ch^\rho) q_h^{\rm surf}(\tilde u)\,,
\end{equation}
where
\begin{equation}\label{eq.qtildetran}
\bar q_h^{\rm tran}(\tilde u) = \int_{V_0}\left(\int_{(0,\varepsilon)}  w^*(y)|h\partial_{y_3} \tilde u|^2 \dd y_3-h^\alpha|\tilde u(y_1,y_2,0)|^2\right)|G|^{1/2}\dd y_2\dd y_3\,,
\end{equation}
 and
 \[w^*(y)=1-2y_3\tilde \kappa (y_1,y_2) +C_*y_3^2\,.\]

\end{rem}
\subsection{Lower bound}~\\
Using \eqref{eq:loc-qf**}, we get by a standard covering argument involving a partition of a unity (see \cite[Sec.~7.3]{HM3d}), the following lower bound on the eigenvalue $\mu(h,\Bb)$,  
\begin{equation}\label{eq:mu-lb}
\mu(h,\Bb)\geq -h^{2-\frac{2}\sigma}+\min_{x_0\in\partial\Omega}\big( |\Bb\cdot\nf(x_0)|h-2\kappa (x_0)h^{2-\frac1\sigma}\big)- C h^{6/5}\,.
\end{equation}
This yields the lower bound in Theorem~\ref{thm:ev}, in light of the relation between the eigenvalues $\mu(h,\Bb)$ and $\lambda(\gamma,\bb)$ displayed in \eqref{eq:ev-g-h}.

Let us briefly recall how to get \eqref{eq:mu-lb}. Let $\rho=\frac2{5}$. Consider a partition of unity of $\overline\Omega$
\[\varphi_{1,h}^2(x)+\varphi_{2,h}^2(x)=1\] 
with the property that, for some $h_0>0$, there exists $C_0$ such that, for $h\in (0,h_0]$, 
\[{\rm supp\,}\varphi_{1,h}\subset \{{\rm dist}(x,\partial\Omega)>\frac12h^\rho)\},\quad  
{\rm supp\,}\varphi_{2,h} \subset \{{\rm dist}(x,\partial\Omega)<h^\rho)\} \quad{\rm and}\quad |\nabla\varphi_{i,h}|\leq C_0h^{-\rho}\,.\]
We  decompose the quadratic form in \eqref{eq:qh}, and get, for  $u\in H^1(\Omega)$,
\begin{equation}\label{eq:qh-ims}
q_h(u)=\sum_{i=1}^2 \Big(q_h(\varphi_{i,h}u)-h^2\|\,|\nabla\varphi_{i,h}|u\,\|^2 \Big)\geq  q_h(\varphi_{2,h}u)-C_0^2h^{2-2\rho}\|u\|^2\,.
\end{equation}
Now we introduce a new partition of unity such that 
\[\sum_{j=1}^N\chi_{j,h}^2=1 \mbox{ on } \{x\in\overline{\Omega},~{\rm dist}(x,\partial\Omega)<h^\rho \}\]
where
\[{\rm supp\,}\chi_{j,h}\subset B(x_{j0},2 h^\rho)\cap\overline{\Omega}\quad(x_{j0}\in\partial\Omega)\,,\]
and 
\[|\nabla \chi_{j,h}|\leq \bar C_0 \, h^{-\rho}\,.\]
Again, we have the decomposition formula
\[ q_h(\varphi_{1,h}u)=\sum_{j=1}^N \Big(q_h(\varphi_{2,h}\chi_{j,h}u)-h^2\|\,|\nabla \chi_{j,h}|u\|^2\Big)\geq \sum_{j=1}^N q_h(\varphi_{2,h}\chi_{j,h}u)-\bar C_0h^{2-2\rho}\|u\|^2\,.\]
We estimate $q_h(\varphi_{2,h}\chi_{j,h}u)$ from below using \eqref{eq:loc-qf**} and  we get
\begin{multline*}
q_h(\varphi_{1,h}u)\geq \sum_{j=1}^N \int_{\Omega}\Big(-h^{2-\frac{2}\sigma}-2\kappa (x_{j0})h^{2-\frac1\sigma}+ |\Bb\cdot\nf(x_{j0})|h-Ch^{6/5}\Big) | \varphi_{2,h}\chi_{j,h} u|^2\dd x\\
-\bar C_0h^{2-2\rho}\|u\|^2\,.
\end{multline*}
Now we use that, for $h$ sufficiently small
\begin{multline*}
-h^{2-\frac{2}\sigma}-2\kappa (x_{j0})h^{2-\frac1\sigma}+ |\Bb\cdot\nf(x_{j0})|h-Ch^{6/5}\\
\geq -h^{2-\frac{2}\sigma}+\min_{x_0\in\partial\Omega}\big( |\Bb\cdot\nf(x_0)|h-2\kappa (x_0)h^{2-\frac1\sigma}\big)-Ch^{6/5}\,.
\end{multline*}

In this way we infer from \eqref{eq:qh-ims}   and the fact that $\rho=\frac2{5}$,
\begin{multline*}
q_h(u)\geq \int_{\Omega}\Big(-h^{2-\frac{2}\sigma}+\min_{x_0\in\partial\Omega}\big( |\Bb\cdot\nf(x_0)|h-2\kappa (x_0)h^{2-\frac1\sigma}\big)-Ch^{6/5}\Big) |  u|^2\dd x\\
-(C_0+\bar C_0)h^{6/5}\|u\|^2\,.
\end{multline*}
The same argument also yields the following  inequality which is important for the localization properties of the eigenfunctions (see \cite[Thm.~5.2]{HK-tams}). There exist $h_0>0$ and $\tilde C $ such that, for all $h\in (0,h_0]$,
\begin{equation}\label{eq:qh-en-lb}
q_h(u)\geq \int_\Omega U_h(x) |u(x)|^2\dd x\,, 
\end{equation}
where
\begin{equation}\label{eq:potential}
U_h(x)=
\begin{cases}
-h^{2-\frac{2}\sigma}+ |\Bb\cdot\nf(p(x))|h-2\kappa (p(x))h^{2-\frac1\sigma}-\tilde Ch^{6/5}&~{\rm if~}{\rm dist}(x,\partial\Omega)<h^\frac 25 \\
0 &~{\rm if~}{\rm dist}(x,\partial\Omega)\geq h^\frac 25
\end{cases}
\,,
\end{equation}
with $p(x)\in\partial\Omega$ satisfies $|x-p(x)|={\rm dist}(x,\partial\Omega)$\,. 

\subsection{Upper bound of the principal eigenvalue}~\\
We choose an arbitrary point $x_0\in\partial\Omega$ and assume that its local $y$-coordinates is $y=0$.
We consider a test function of the form
\begin{equation}\label{eq:test-fct-tu}
\tilde u(y)=\tilde \chi(h^{-\rho}y) f(h^{-1/\sigma}y_3)\varphi_h(y_1)\exp\left(i\frac{ w(y_1,y_2)}{h}\right)\,,
\end{equation}
where $w$ is defined in  \eqref{defw}
\[f(\tau)=\sqrt{2}\,e^{-\tau}\,,\quad\tilde\chi(z)=\prod_{i=1}^3\chi(z_i)\quad(z=(z_1,z_2,z_3))\]
and $\chi\in C^\infty(\R)$ is a cut-off function such that $\chi=1$ on $[-\frac12,\frac12]$. We choose the parameter $\rho=\frac2{5}\in(0,\frac12)$ as in \eqref{eq:loc-qf**}.
The gauge function $w$ is introduced  in order to  ensure that \eqref{eq:A-lin-v1} holds. The function $\varphi_h$ is a ground state of the harmonic oscillator
\[ -h^2\frac{\dd^2}{\dd y_1^2}+( \mathcal B_3^0 y_1)^2\,,\]
and is given as follows\footnote{In the case $\mathcal B_3^0=0$, which amounts to $\Bb\cdot\n(x_0)=0$, the ground state energy  becomes $0$ and the \emph{generalized} $L^\infty$ ground state is a constant function. }
\[ \varphi_h(y_1)= \exp \left( -\frac{|\mathcal B_3^0|y_1^2}{2h}\right)\,.\]
In the Cartesian coordinates, it takes the form
\begin{equation}\label{eq:test-fct-u-cart}
u(x)= \exp\left( \frac{i\phi(x)}{h}\right) \tilde u(\Phi^{-1}(x) )\,,
\end{equation}
where $\Phi$ is the transformation that maps the Cartesian  coordinates to the boundary coordinates in a neighborhood of $x_0$ (see \eqref{eq:y0,W}), and $\phi$ is the gauge function required to assume that  $\tilde A_3=0$.

Thanks to Remark~\ref{rem:qf-ub}, we may write 
\[q_h(u)\leq \bar q_h^{\rm tran}(\tilde u)+(1+Ch^\rho) q_h^{\rm surf}(\tilde u)\,.\]
where the two auxilliary quadratic forms are defined in \eqref{eq.qtildetran} and \eqref{eq:loc-qf-lb2}. We also recall that $\alpha$ and $\sigma$ are related by \eqref{eq:alpha}.
The choice of $f$, its exponential decay, and the corresponding scaling give
\[ 
\bar q_h^{\rm tran}(u)\leq \int_{{ \{|y|<h^\rho,y_3>0\}}} \Big(-h^{2-\frac{2}\sigma}-2\kappa (x_0)h^{2-\frac1\sigma}+Ch^{2}\Big)|\tilde u|^2 |g|^{1/2}\dd y\,.
\]
Moreover, by using \eqref{eq.Taylor2}, and the classical inequality $|a+b|^2\leq(1+\varepsilon)|a|^2+(1+\varepsilon^{-1})|b|^2$ with $\varepsilon=h^\rho$, we get
\[q^{\rm surf}_h(\tilde u)\leq  (1+h^{\rho})\int_{{\{|y|<h^\rho,y_3>0\}}}|(-ih\nabla_{y_1,y_2}+\tilde \Ab^{\rm lin})v|^2\dd y+Ch^{3\rho}\int_{{\{|y|<h^\rho,y_3>0\}}}| v|^2\dd y\,.\]
where $\tilde A^{\rm lin}$ is defined in \eqref{eq:A-lin-fv}, and 
\[v(y_1,y_2,y_3)=\tilde\chi_h(y) f_h(y_3)\varphi_h(y_1)\,,\quad \chi_h(y)=\tilde \chi(h^{-\rho}y)\,,\quad f_h(y_3)=f(h^{-1/\sigma}y_3)\,. \]
Since $v$ is real-valued, we have
\[ |(-ih\nabla_{y_1,y_2}+\tilde \Ab^{\rm lin})v|^2 = h^2|\partial_{y_1}v|^2+h^2|\partial_{y_2}v|^2+\big( (\mathcal  B_2^0 y_3)^2 + ( \mathcal B_3^0 y_1-\mathcal B_1^0y_3)^2\big) |v|^2\,,\]
with
\[\partial_{y_1}v=\chi_h \,f_h\,\partial_{y_1}\varphi_h+ f_h\, \varphi_h\, \partial_{y_1}\chi_h\quad{\rm and}\quad
 \partial_{y_2}v=f_h\,\varphi_h\,\partial_{y_2}\chi_h\,.\]
 When $\mathcal  B_3^0 \neq 0$, by the exponential decay of $v$ in the $y_1$ direction  we get
\[ \int_{{\{|y|<h^\rho,y_3>0\}}} h^2|\partial_{y_1}v|^2 \dd y=
\int_{{\{|y|<h^\rho,y_3>0\}}} h^2|\chi_h f_h\partial_{y_1}\varphi_h|^2 \dd y +\mathcal O(h^\infty) \int_{{\{|y|<h^\rho,y_3>0\}}} |v|^2 \dd y\,.\]
When $\mathcal B_3^0=0$, $\varphi_h$ is constant, hence  $\chi_h \,f_h\,\partial_{y_1}\varphi_h=0$ and
 \[\int_{{\{|y|<h^\rho,y_3>0\}}} h^2|\partial_{y_1}v|^2 \dd y=\mathcal O(h^{2-2\rho}) \int_{{\{|y|<h^\rho,y_3>0\}}} |v|^2 \dd y\,.\]
 Hence, in each case, we have 
 \[ \int_{{\{|y|<h^\rho,y_3>0\}}} h^2|\partial_{y_1}v|^2 \dd y=
\int_{{\{|y|<h^\rho,y_3>0\}}} h^2|\chi_h f_h\partial_{y_1}\varphi_h|^2 \dd y +\mathcal O(h^{2-2\rho}) \int_{{\{|y|<h^\rho,y_3>0\}}} |v|^2 \dd y\,.\]
We also have the estimate
\[\int_{{\{|y|<h^\rho,y_3>0\}}} h^2|\partial_{y_2}v|^2\dd y= \mathcal O(h^{2-2\rho})  \int_{{\{|y|<h^\rho,y_3>0\}}}|v|^2\dd y\,.\]
Moreover,
\begin{align*}
\int_{{\{|y|<h^\rho,y_3>0\}}}&\big( (\mathcal  B_2^0 y_3)^2 + ( \mathcal B_3^0 y_1-\mathcal B_1^0y_3)^2\big) |v|^2\dd y\\
&=
\int_{{\{|y|<h^\rho,y_3>0\}}}\big( ( \mathcal B_3^0)^2 y_1^2 + \mathcal O(y_1y_3)+\mathcal O(y_3^2)\big) |v|^2\dd y\\
&=\int_{{\{|y|<h^\rho,y_3>0\}}}  ( \mathcal B_3^0)^2 y_1^2 \chi_h^2 f_h^2|\varphi_h|^2 \dd y +
\big(\mathcal O(h^{\frac2\sigma})+\mathcal O(h^{\frac12+\frac1\sigma})\big)\int_{{\{|y|<h^\rho,y_3>0\}}} |v|^2\dd y\,.
\end{align*}
Collecting the foregoing estimates we get, for some constant $C>0$, 
\begin{align*}
\int_{{\{|y|<h^\rho,y_3>0\}}}&|(-ih\nabla_{y_1,y_2}+\tilde \Ab^{\rm lin})v|^2\dd y\\&=
\int_{{\{|y|<h^\rho,y_3>0\}}}\tilde\chi_h^2\left(|(-ih\partial_1)(f_h\varphi_h)|^2+|(\tilde B_3^0 y_1)(f_h\varphi_h)|^2\right) \dd y\\
&\text{ }\quad\qquad \qquad  + C 
\Big( h^{\frac2\sigma}+h^{\frac 12+\frac 1\sigma}+h^{2-2\rho}  \Big) \|v\|^2_{ L^2(|g|^{\frac 12}\dd y)}\\
&\leq \big((|\mathcal  B^0_3| h+C(h^{\frac2\sigma} +h^{\frac 12+\frac 1\sigma}+h^{2-2\rho}) \big)\|v\|^2_{ L^2(|g|^{\frac 12}\dd y)}\,.
\end{align*}
Therefore,
\begin{multline}\label{eq:up-qf-test-fc-u}
q_h(u)\leq \int_{{\{|y|<h^\rho,y_3>0\}}} \Big(-h^{2-\frac{2}\sigma}-2\kappa (x_0)h^{2-\frac1\sigma}+Ch^{2}\Big)|\tilde u|^2 |g|^{1/2}\dd y\\
+\Big(|\mathcal  B_3^0|h+ r(h;x_0)\Big)\int_{{\{|y|<h^\rho,y_3>0\}}}|\tilde u|^2|g|^{1/2}\dd y\,,
\end{multline}
where $r(h;x_0)=o(h)$ uniformly with respect to $x_0$ (due to our conditions on $\sigma$ and $\rho$). \\
For $\sigma=1$, we have $r(h;x_0)=\mathcal O(h^{6/5})$.

The min-max principle now yields the upper bound
\begin{equation}\label{uppbnd}
\mu(h,\Bb)\leq -h^{2-\frac{2}\sigma}-2\kappa (x_0)h^{2-\frac1\sigma}+ |\Bb\cdot\nf(x_0)|\, h+ o(h)\,,
\end{equation}
{with $o(h)$ being uniformly controlled with respect to $x_0$ (by compactness of $\partial\Omega$).}  Minimizing over $x_0\in\partial\Omega$, we get
\[\mu(h,\Bb)\leq -h^{2-\frac{2}\sigma}+\min_{x_0\in\partial\Omega}\big( |\Bb\cdot\nf(x_0)|h-2\kappa (x_0)h^{2-\frac1\sigma}\big)+ o(h)\,.\]
This concludes the proof of Theorem~\ref{thm:ev}, in light of \eqref{eq:ev-g-h}.

\begin{rem}\label{rem:rem-c-r}
Thanks to \eqref{eq:up-qf-test-fc-u}, the remainder term in \eqref{uppbnd} becomes $\mathcal O(h^{6/5})$ in the case when $\sigma=1$. Consequently, in this case, the eigenvalue asymptotics reads as follows
\[ \mu(h,\Bb)= -h^{2-\frac{2}\sigma}+\min_{x_0\in\partial\Omega}\big( |\Bb\cdot\nf(x_0)|h-2\kappa (x_0)h^{2-\frac1\sigma}\big)+\mathcal O(h^{6/5})\,.\]
The improved remainder term will be helpful in the analysis of the ball situation in Sec.~\ref{sec:ball}. 
\end{rem}

\subsection{Upper bound for the $n$-th eigenvalue}
For every positive integer $n$, the estimate in \eqref{eq:up-qf-test-fc-u} still holds when the functions $\tilde u$ in \eqref{eq:test-fct-tu} and $u$ in \eqref{eq:test-fct-u-cart} are replaced by the functions $\tilde u_n$ and $u_n$ defined as follows:
\[\tilde u_n(y)= \chi(h^{-\rho}y_1)\chi(h^{-\rho}y_3) \vartheta_n(y_2)  f(h^{-1/\sigma}y_3)\varphi_h(y_1)\exp\left(i\frac{ w(y_1,y_2)}{h}\right)\,, \]
\[u_n(x)=\exp\left( \frac{i\phi(x)}{h}\right) \tilde u_n(\Phi^{-1}(x) )\,,\]
and
\[\vartheta_n(y_2)=\mathbf 1_{(-h^\rho,h^\rho)}(y_2)\sin\left(\frac{n\pi(y_2-h^\rho)}{h^\rho}\right)\,.\]
The functions $(\vartheta_n)_{n\geq 1}$ are orthogonal\,\footnote{The functions $\vartheta_n(y_2)$ are in fact the eigenfunctions of the Dirichlet $1$D Laplace operator on $[-h^\rho,h^\rho]$}. This ensures that the space  $M_n={\rm Span}(u_1,\cdots,u_n)$ satisfies ${\rm dim}(M_n)=n$. The min-max principle yields that, with a remainder uniform in $x_0$, we have\,\footnote{A special attention is needed for the case when $\mathcal B_3^0=0$, which we handle in the same way done along the proof of \eqref{uppbnd}.}
\[\mu_n(h,\Bb)\leq \max_{u\in M_n} \frac{q_h(u)}{\|u \|^2}\leq  -1+ \big(|\Bb\cdot\nf(x_0)|h-2\kappa (x_0)h^{2-\frac1\sigma}\big)+  o(h)\,,\]
where $\mu_n(h,\Bb)$ denotes the $n$'th eigenvalue counting multiplicities. Minimizing over $x_0\in\partial\Omega)$, we get 
\begin{equation}\label{eq:ub-ev-n}
\mu_n(h,\Bb)\leq -h^{2-\frac{2}\sigma}+\min_{x_0\in\partial\Omega}\big( |\Bb\cdot\nf(x_0)|h-2\kappa (x_0)h^{2-\frac1\sigma}\big)+  o(h)\,.
\end{equation}

\section{Effective boundary operator in the critical regime}\label{sec:efop}

\subsection{Preliminaries}
We assume that $\sigma=1$  in \eqref{thm:ev} (hence $\alpha=1$ in \eqref{eq:qh}).  The quadratic form in \eqref{eq:qh} is then
\begin{equation}\label{eq:qh-c} 
q_h(u)=\int_\Omega|(-ih\nabla+\Ab)u|^2\dd x-h\int_{\partial\Omega}|u|^2\dd s(x)\,.
\end{equation}
This regime  is critical since the contribution of the magnetic field and the Robin parameter are of the same order. In the semiclassical version, our estimate reads as follows (see Remark \ref{rem:rem-c-r})
\begin{equation}\label{eq:ev-critical}
\mu(h,\Bb)= -1+\min_{x_*\in\partial\Omega}\big( |\Bb\cdot\nf(x_*)|-2\kappa (x_*)\big)h+\mathcal O(h^{6/5})\,.
\end{equation}
Observing that $\mu_n(h,\Bb) \geq \mu_1(h,\Bb)$ and \eqref{eq:ub-ev-n}, the expansion  in \eqref{eq:ev-critical} continues to hold for the  $n$th  eigenvalue $\mu_n(h,\Bb)$ (with $n$ fixed), namely,
\begin{equation}\label{eq:ev-critical-n}
\mu_n(h,\Bb)= -1+\min_{x_*\in\partial\Omega}\big( |\Bb\cdot\nf(x_*)|-2\kappa (x_*)\big)h+\mathcal O_n(h^{6/5})\,.
\end{equation} 
By a standard argument (see \cite[Thm.~5.1]{HK-tams}), for any $n\in \mathbb N^*$, there exist $h_n>0$ and $C_n>0$, such that for $h\in (0,h_n]$, any $n$-th $L^2$-normalized eigenfunction $u_n$,  is  localized near  the boundary as follows
\begin{equation}\label{eq:dec-ef}
 \int_\Omega\left(|u_n|^2+|(-ih\nabla+\Ab) u_n|^2\right)\exp\left( \frac{{\rm dist}(x,\partial\Omega)}{4h}\right)\dd x\leq C_n\,. 
 \end{equation}
The formula in \eqref{eq:ev-critical}, along with the one in \eqref{eq:potential} and Agmon estimates,  allows us to refine the localization of the $n$-th  eigenfunction  near the set (see \cite[Sec.~8.2.3]{HM3d})
\begin{equation}\label{eq:set-S}
S:=\Big\{x\in\partial\Omega,~ |\Bb\cdot\nf(x)|-2\kappa (x)=\min_{x_*\in\partial\Omega}\big( |\Bb\cdot\nf(x_*)|-2\kappa (x_*)\big)\Big\}\,.
\end{equation}
More precisely, we have  Proposition~\ref{prop:dec-c-r} below. Its statement involves a smooth function $\chi:\R\to[0,1]$  supported in $[-2\epsilon_0,2 \epsilon_0]$ such that $\chi=1$ on $[-\epsilon_0,\epsilon_0]$, where $\epsilon_0$ is small enough. We also need the potential function $V$ defined in a neighborhood of $\partial\Omega$ as follows
\[V(x)= |\Bb\cdot\nf(p(x))|-2\kappa (p(x))\quad{\rm and}\quad E=\min_{x\in\partial\Omega} V(x)\,,\]
where $p(x)\in\partial\Omega$ is given by ${\rm dist}(x,p(x))={\rm dist}(x,\partial\Omega)$.  We also denote by $d_{V-E} (\cdot ,S)$ the Agmon distance to $S$ in $\partial \Omega$ associated with the potential $(V-E)$ (see \cite[Sec.~3.2, p.~19]{H-ln}).
\begin{proposition}\label{prop:dec-c-r}
Given $\tau \in  (0,1)$,  $n\geq 1$ and any $\tilde \eta$,  there exist positive constants $h_n,C_n, \delta(\tilde\eta)$ such that, $\lim_{\tilde \eta\rightarrow 0} \delta(\tilde\eta)=0$ and such that, for all $h\in(0,h_n]$, the following estimate holds
\begin{equation}\label{eq:dec-agm-tan} 
\int_{\Omega}\left(|u_n|^2+h |(-i\nabla+\Ab) u_n|^2\right)\exp\left( 2 \tau \chi ({\rm dist }(x,\partial \Omega)) \frac{ \phi (x) }{h^{1/2}} \right)\dd x \leq C_{n} \exp \delta(\tilde \eta) h^{-\frac 12} \,,
\end{equation}
where 
$$
\phi(x):=d_{V-E} (p(x),S) \,.
$$
In particular, for each $\varepsilon\in(0,1)$, there exists $C_\epsilon >0$ and $h_\epsilon$ such that, for $h\in (0,h_\epsilon]$,  
\begin{equation}\label{eq:dec-set-S} 
 \int_{\Omega\setminus S_{\varepsilon}}\left(|u_n|^2+h |(-i\nabla+\Ab) u_n|^2\right)\dd x\leq    \exp - C_\epsilon h^{-\frac 12} \,\,,
\end{equation}
where 
\begin{equation}\label{eq:S-h-epsilon}
S_{\varepsilon}=\Big\{x\in\Omega,~{\rm dist}(x,\partial\Omega)<\varepsilon~\&~ |\Bb\cdot\nf(p(x))|-2\kappa (p(x))< \min\limits_{x_*\in\partial\Omega}\big( |\Bb\cdot\nf(x_*)|-2\kappa (x_*)\big)+\varepsilon\Big\}.
\end{equation}
\end{proposition}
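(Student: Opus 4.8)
The plan is to run a standard (magnetic) Agmon estimate, fed by the ingredients already in hand: the rough boundary‑layer bound \eqref{eq:dec-ef}, the form lower bound \eqref{eq:qh-en-lb}--\eqref{eq:potential} (which for $\sigma=1$ reads $q_h(w)\ge\int_\Omega U_h|w|^2\dd x$ with $U_h(x)=-1+V(x)h-\tilde C h^{6/5}$ when ${\rm dist}(x,\partial\Omega)<h^{2/5}$ and $U_h\equiv0$ otherwise), the sharp eigenvalue asymptotics \eqref{eq:ev-critical-n} ($\mu_n(h,\Bb)=-1+Eh+\mathcal O(h^{6/5})$), and the usual properties of the Agmon pseudo‑distance $d_{V-E}(\cdot,S)$ on $\partial\Omega$ — Lipschitz regularity and the eikonal bound $|\nabla' d_{V-E}(\cdot,S)|^2\le V-E$ (see \cite{H-ln}). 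Fix $\tau\in(0,1)$ and $\tilde\eta>0$ and set $\Psi:=\tau\,\chi({\rm dist}(\cdot,\partial\Omega))\,\phi\,h^{-1/2}$ with $\phi=d_{V-E}(p(\cdot),S)$; this is a bounded Lipschitz function on $\overline\Omega$ supported in a fixed neighbourhood of $\partial\Omega$, and $e^{2\Psi}$ is precisely the weight in \eqref{eq:dec-agm-tan}. Testing the eigenvalue identity $q_h(u_n,\cdot)=\mu_n(h,\Bb)\langle u_n,\cdot\rangle$ against the admissible function $e^{2\Psi}u_n$ and taking real parts gives the Agmon identity
\[
q_h(e^{\Psi}u_n)-h^2\big\||\nabla\Psi|\,e^{\Psi}u_n\big\|_{L^2(\Omega)}^2=\mu_n(h,\Bb)\,\|e^{\Psi}u_n\|_{L^2(\Omega)}^2 .
\]

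Combining this with \eqref{eq:qh-en-lb} applied to $w=e^{\Psi}u_n$ yields $\int_\Omega\big(U_h-\mu_n(h,\Bb)-h^2|\nabla\Psi|^2\big)|e^{\Psi}u_n|^2\,\dd x\le0$. On $\{{\rm dist}(x,\partial\Omega)<h^{2/5}\}$ one has $U_h-\mu_n(h,\Bb)\ge h(V(x)-E)-Ch^{6/5}$, while the block structure of the metric near $\partial\Omega$ from Section~\ref{sec.2} (corrections of size $\mathcal O({\rm dist})$) together with the eikonal bound gives $h^2|\nabla\Psi|^2\le h\tau^2(1+Ch^{2/5})(V(x)-E)$, so the integrand there is $\ge\frac{1-\tau^2}{2}h(V(x)-E)|e^{\Psi}u_n|^2-Ch^{6/5}|e^{\Psi}u_n|^2$ for $h$ small. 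On $\{{\rm dist}\ge h^{2/5}\}$ the integrand is $\ge c|e^{\Psi}u_n|^2$ with $c>0$, and there $e^{2\Psi}\le e^{Ch^{-1/2}}$ whereas \eqref{eq:dec-ef} supplies a factor $e^{-{\rm dist}/(4h)}\le e^{-h^{-3/5}/4}$, so the entire weighted mass of $|u_n|^2+h|(-i\nabla+\Ab)u_n|^2$ over that region is $\le C_n\exp(Ch^{-1/2}-\tfrac14 h^{-3/5})$, negligible. Dropping the non‑negative contribution of $\{{\rm dist}\ge h^{2/5}\}$, this leaves for $h$ small
\[
\int_{\{{\rm dist}<h^{2/5}\}}(V(x)-E)\,|e^{\Psi}u_n|^2\,\dd x\ \le\ \frac{2C}{1-\tau^2}\,h^{1/5}\,\|e^{\Psi}u_n\|_{L^2(\Omega)}^2+(\text{negligible}).
\]

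To turn this into a bound on $\|e^{\Psi}u_n\|^2$ itself I would split $\{{\rm dist}<h^{2/5}\}$ into $A=\{V(x)-E\ge\tilde\eta\}$ and $B'=\{V(x)-E<\tilde\eta\}$. On $A$ the last display gives $\int_A|e^{\Psi}u_n|^2\le(C/\tilde\eta)h^{1/5}\|e^{\Psi}u_n\|^2+(\text{negligible})$; on $B'$, since $S$ is exactly the minimum set of the continuous function $V-E\ge0$ on the compact $\partial\Omega$ and $V-E$ is Lipschitz and vanishes on $S$, the sublevel set $\{V-E<\tilde\eta\}$ shrinks to $S$ and, connecting a point of it to $S$ by an almost‑minimizing geodesic, $m(\tilde\eta):=\sup_{\{V-E<\tilde\eta\}}d_{V-E}(\cdot,S)\to0$ as $\tilde\eta\to0$; hence $\Psi\le\tau m(\tilde\eta)h^{-1/2}$ on $B'$ and $\int_{B'}|e^{\Psi}u_n|^2\le e^{2\tau m(\tilde\eta)h^{-1/2}}$. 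Adding these, using $\|u_n\|_{L^2(\Omega)}=1$ and absorbing the $A$‑term for $h$ small, yields $\int_\Omega|u_n|^2e^{2\Psi}\,\dd x\le 3\,e^{2\tau m(\tilde\eta)h^{-1/2}}$. For the magnetic‑gradient part I would use the Agmon identity together with $\|(-ih\nabla+\Ab)(e^{\Psi}u_n)\|^2=q_h(e^{\Psi}u_n)+h\|e^{\Psi}u_n\|_{L^2(\partial\Omega)}^2$, a trace inequality $\|v\|_{L^2(\partial\Omega)}^2\le C(\eta^{-1}\|v\|^2+\eta\|\nabla v\|^2)$ with $\eta=ch^2$, and a short bootstrap to get $\int_\Omega h|(-i\nabla+\Ab)u_n|^2e^{2\Psi}\,\dd x\le Ch^{-2}\|e^{\Psi}u_n\|^2$; the polynomial factor is absorbed by slightly enlarging the exponent, so \eqref{eq:dec-agm-tan} holds with $\delta(\tilde\eta)$ a fixed multiple of $m(\tilde\eta)$. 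Finally \eqref{eq:dec-set-S} follows by taking $\tilde\eta$ small depending on $\varepsilon$: on $(\Omega\setminus S_\varepsilon)\cap\{{\rm dist}<\varepsilon\}=\{{\rm dist}<\varepsilon,\ V-E\ge\varepsilon\}$ one has $\chi({\rm dist})\equiv1$ and, by compactness, $d_{V-E}(p(x),S)\ge c_\varepsilon>0$ (any path to $S$ must leave a ball around $p(x)$ on which $V-E$ is bounded below), so that set contributes $\le e^{-2\tau c_\varepsilon h^{-1/2}}$ times the left‑hand side of \eqref{eq:dec-agm-tan}, while the part $\{{\rm dist}\ge\varepsilon\}$ is handled directly by \eqref{eq:dec-ef}.

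The main obstacle is the competition, in the region $\{V>E\}$, between the Agmon gain $\frac{1-\tau^2}{2}h(V-E)$ coming from the eikonal inequality and the $\mathcal O(h^{6/5})$ errors carried by $U_h$ and by $\mu_n(h,\Bb)$: the gain beats the error since $h^{6/5}\ll h$, but it degenerates as $x\to S$, which is exactly why one cannot estimate $\|e^{\Psi}u_n\|^2$ directly and must introduce the auxiliary parameter $\tilde\eta$ together with the soft fact $m(\tilde\eta)\to0$. The two remaining technical points — the eikonal bound for $\phi=d_{V-E}\circ p$ with only an $\mathcal O({\rm dist})$ loss (from the near‑boundary metric of Section~\ref{sec.2}), and the trace/bootstrap argument for the magnetic‑gradient term — are routine.
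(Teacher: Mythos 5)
Your argument reproduces the paper's own proof: the weighted Agmon identity for $q_h(e^\Psi u_n)$, the lower bound \eqref{eq:qh-en-lb}, the eikonal inequality $|\nabla\phi|^2\le\bigl(1+M\,\mathrm{dist}(x,\partial\Omega)\bigr)(V-E)$ from Remark~\ref{rem:qf-b-c}, and the split of $\{\mathrm{dist}(x,\partial\Omega)<h^{2/5}\}$ according to $V-E\gtrless\tilde\eta$. You merely make explicit two points the paper leaves terse --- that $m(\tilde\eta)=\sup_{\{V-E<\tilde\eta\}}\phi\to 0$ is what controls the sublevel-set contribution and hence produces $\delta(\tilde\eta)$, and that $d_{V-E}(p(x),S)\ge c_\varepsilon>0$ on the near-boundary part of $\Omega\setminus S_\varepsilon$ (the paper instead quotes the bound $c_0(V(x)-E)^{3/2}\le\phi(x)$ from \cite{H-ln}) --- so the proposal is correct and matches the paper's approach.
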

\begin{proof}
The estimate in \eqref{eq:dec-set-S} results from \eqref{eq:dec-agm-tan} and \eqref{eq:dec-ef} by a clever choice of $\tilde \eta$, noting also  that there exists $c_0>0$ such that (see \cite[Lem.~3.2.1, p.~20]{H-ln})
$$
c_0 (V(x)-E)^{\frac 32} \leq \phi(x)\,.
$$
So we need to understand the decay property close to the boundary.  
Consider the function
\[\Phi(x)= \exp\left( \tau h^{-1/2} \chi ({\rm dist}(x,\partial\Omega)) \phi\big(x)\right)\,.\]
We note that  $p$  is well defined on the support of $\chi\left({\rm dist}(x,\partial\Omega)\right)$ and that, by Remark~\ref{rem:qf-b-c}, 
there exists a positive constant $M$  such that  $$|\nabla\phi|^2\leq \big(1+M{\rm dist}(x,\partial\Omega)\big)(V-E)\mbox{  a.e. on } \{{\rm dist}(x,\partial\Omega) < 2\epsilon_0\}\,.$$
We write the identity
\begin{equation}\label{eq.Agmonphia}
q_h(\Phi u_n)-h^2\int_\Omega |\nabla \Phi |^2|u_n|^2\,dx=\mu_h(h)\| \Phi   u_n\|_{L^2(\Omega)}^2\,,
\end{equation}
where $q_h$ is introduced in \eqref{eq:qh-c}.
Thanks to \eqref{eq:qh-en-lb} and \eqref{eq:ev-critical-n}, we get
\begin{multline*}
\int_{{\rm dist}(x,\partial\Omega)<h^{2/5}}\left(U_h|\Phi u_n|^2-h^2|\nabla\Phi|^2-(-1+hE+Ch^{6/5})|\Phi u_n|^2\right)\dd x\\
\leq C\int_{{\rm dist}(x,\partial\Omega)>h^{2/5}}(h^2|\nabla\Phi|^2+|\Phi|^2) |u_n|^2\dd x\,,
\end{multline*}
and
\begin{multline*}
\int_{{\rm dist}(x,\partial\Omega)<h^{2/5}}\left((hV-hE-Ch^{6/5})|\Phi u_n|^2-h^2|\nabla\Phi|^2\right)\dd x\\
\leq C\int_{{\rm dist}(x,\partial\Omega)>h^{2/5}}(h^2|\nabla\Phi|^2+|\Phi|^2) |u_n|^2\dd x\,.
\end{multline*}
Notice that
\[|\nabla\Phi|^2=h^{-1}\tau^2\, \Phi^2\left|\phi(x)\chi'\left(\mathrm{dist}(x,\partial\Omega)\right)\nabla\mathrm{dist}(x,\partial\Omega)+\chi\left(\mathrm{dist}(x,\partial\Omega)\right)\nabla\phi(x)\right|^2\,.\]
Thus,
\begin{equation}\label{eq.nablaphia}
|\nabla\Phi|^2\leq  h^{-1} (\tau^2 + \eta) \Phi^2\, \left(|\nabla\phi(x)|^2+C_\eta |\chi' (\mathrm{dist}(x,\partial\Omega))|\right) \,.
\end{equation}
Using \eqref{eq.Agmonphia} and \eqref{eq.nablaphia}, we deduce that 
\begin{multline*}
\int_{{\rm dist}(x,\partial\Omega)<h^{2/5}} \Big ( hV-hE- h (\tau^2+\eta) |\nabla\phi|^2-Ch^{6/5}\Big) |\Phi  u_n|^2\dd x\\
\leq  C_\eta h \int |\chi'\left(\mathrm{dist}(x,\partial\Omega) \right||\Phi u_n|^2\dd x+C\int_{{\rm dist}(x,\partial\Omega)>h^{2/5}}(h^2|\nabla\Phi|^2+|\Phi|^2) |u_n|^2\dd x  \,.
\end{multline*}
Thanks to \eqref{eq:dec-ef}, we get
\begin{equation*}
\int_{{\rm dist}(x,\partial\Omega)<h^{2/5}} \Big ( hV-hE- h (\tau^2+\eta) |\nabla\phi|^2-Ch^{6/5}\Big) |\Phi  u_n|^2\dd x\\
\leq  Ce^{-c/h^{3/5}}  \,.
\end{equation*}
Now, we choose $ \eta = \frac{1-\tau^2}2$.
Thus, 
\begin{equation}\label{eq:dec-c-r*}
\int_{{\rm dist}(x,\partial\Omega)<h^{2/5}} \Big ( \left(\frac{(1-\tau^2)(1-Mh^{2/5})}2)\right)(V-E)-Ch^{1/5}\Big) |\Phi  u_n|^2\dd x
\leq  \frac{C}{h} e^{-c/h^{3/5}}  \,.
\end{equation}
For any $\tilde \eta >0$, we get
\begin{equation}\label{eq:dec-c-r*new}
\begin{array}{l}
\int_{\{{\rm dist}(x,\partial\Omega)<h^{2/5}\} \cap \{V(x)- E >\tilde \eta\} } \Big ( \frac{1-\tau^2}{4} (V-E)-Ch^{1/5}\Big) |\Phi  u_n|^2\dd x\\
\qquad\qquad\qquad \leq  \frac{C_{\tilde \eta}}{h} e^{-c/h^{3/5}}  + \tilde C \int_{ V(x)-E <  \tilde \eta }  |\Phi  u_n|^2 dx    \,.
\end{array}
\end{equation}
So we infer from \eqref{eq:dec-c-r*new} that for any $\tilde \eta>0$,  there exists $\hat C_{\tilde \eta }  >0$ such that
\[
\int_{{\rm dist}(x,\partial\Omega)<h^{2/5}} |\Phi  u_n|^2\dd x
\leq  \hat C_{\tilde \eta}       \exp \delta(\tilde \eta)  h^{-\frac 12}\,.
\]
where $\delta(\tilde \eta) \rightarrow 0$ as $\tilde \eta \rightarrow 0$.\\
Implementing again, \eqref{eq:dec-ef}, we have proven that for any $\tilde \eta$, there exists $\hat C_{\tilde \eta} >0$ and $h_{\tilde \eta} >0$ such that, for $h\in (0,h_{\tilde \eta})$, 
\begin{equation}
\int_{\Omega} |\Phi  u_n|^2\dd x
\leq  2 \hat C_{\tilde \eta}       \exp \delta(\tilde \eta) h^{-\frac 12}
\,.
\end{equation}
Inserting this into \eqref{eq.Agmonphia}, we eventually  get the decay estimate, close to the boundary.
\end{proof}

\begin{rem}
When $S=\{x_0\}$ and $V$ has a non degenerate unique minimum at $x_0$, we can take $\tilde \eta  = A h^\frac 15$ and get  $\delta(\tilde \eta) \sim B h^{\frac 15}$\,.
\end{rem}

\subsection{Reduction to an operator near $x_0$}

In light of the estimates in \eqref{eq:dec-ef} and \eqref{eq:dec-set-S},  it is sufficient to analyze the quadratic form in \eqref{eq:qh-c} on functions  supported in $\{{\rm dist}(x,\partial\Omega)<h^\varrho\}\cap S_{\varepsilon}$, with $\varrho,\varepsilon\in (0,1)$. We explain this below. We recall that, under our assumptions in Theorem \ref{thm:ev-c},
\begin{equation}\label{eq:S-M0} 
S= \{ x_0\}\,.
\end{equation}
Choose $\delta\in(0,1)$, an open subset $D$ of $\R^2$, with a smooth boundary, and boundary coordinates $y:=(y',y_3)\in \mathcal{V}=D\times (0,\delta)$ that maps $\mathcal V$ to a neighborhood of $\mathcal N_0$ of  the point $M_0$. Recall that $y_3$ denotes the distance to the boundary, and the coordinates of $x_0$  are defined by $y=0$. 

If we consider the operator defined by the restriction of the quadratic form in \eqref{eq:qh} on functions $u\in H^1(\mathcal N_0)$ satisfying $u=0$ on $\Omega\cap\partial \mathcal N_0$, we end up with an operator $\mathcal L_h^{\mathcal N_0}$ whose $n$-th eigenvalue satisfies
\begin{equation}\label{eq.locbnd}
\mu_n(h,\Bb)\leq \mu_n(\mathcal L_h^{\mathcal N_0})\leq  \mu_n(h,\Bb)+\mathcal O(h^\infty)\,.
\end{equation}
The space $L^2(\mathcal N_0)$ is transformed, after passing to the boundary coordinates, to  the space $L^2(\mathcal V,\dd m)$ with the \emph{weighted} measure $dm=|g(y',y_3)|^{1/2}\dd y$. We introduce also the  spaces $L^2(D)$ and $L^2(D,\dd s)$,   with the canonical  measure $dy'$ and  weighted measure,
\[\dd s=|G(y')|^{1/2}\dd y'=|g(y',0)|^{1/2}\dd y'\,,\]
respectively. Note that  $L^2(D,\dd s)$ is the transform of the space $ L^2(\partial\Omega\cap\mathcal N_0)$ by the boundary coordinates. In these coordinates ((see \eqref{eqg0}-\eqref{eq:g,ij*},  \eqref{eq:A3=0}, and \eqref{defqu})), the quadratic form of the operator $\mathcal L_h^{\mathcal N_0}$ is
\begin{align*}
&q_h(u;\mathcal V)\\
&\quad =\int_{\mathcal{V}} |(-ih\partial_{y_3}-\tilde A_3(y',y_3)) \tilde u |^2 |g(y',y_3)|^{1/2}\dd y_3\dd y'-h\int_{D}|\tilde u(y',0)|^2|g(y',0)|^{1/2}\dd y'\\
&\qquad +\int_{\mathcal{V}}\sum_{k,\ell \in \{1,2\}}  g^{k\ell}(y',y_3) ((-ih\partial_k-\tilde A_k (y',y_3)) \tilde u )\,\overline{((-ih\partial_\ell-\tilde A_\ell(y',y_3))\tilde u )} \, |g(y',y_3)|^{1/2}\dd y'\dd y_3\,.
\end{align*} 
Up to a change of gauge, we may assume that $\tilde A_3=0$.

We will derive then a `local' effective unbounded  operator in  the weighted space $L^2(D)$.

\subsection{The effective operator}
\subsubsection{Rescaling and splitting of the quadratic form}
We recall that
\begin{align*}
&q_h(u;\mathcal V)\\
&\quad =\int_{\mathcal{V}} h^2|\partial_{y_3} \tilde u |^2 |g(y',y_3)|^{1/2}\dd y_3\dd y'-h\int_{D}|\tilde u(y',0)|^2|g(y',0)|^{1/2}\dd y'\\
&\qquad +\int_{\mathcal{V}}\sum_{k,\ell \in \{1,2\}}  g^{k\ell}(y',y_3) ((-ih\partial_k-\tilde A_k (y',y_3)) \tilde u )\,\overline{((-ih\partial_\ell-\tilde A_\ell(y',y_3))\tilde u )} \, |g(y',y_3)|^{1/2}\dd y'\dd y_3\,.
\end{align*} 
Introducing the rescaled normal variable $t=h^{-1}y_3$, the function $\tilde u$ is to transformed to the new function $\psi(y',t):=\tilde u(y',ht)$ and the domain $\mathcal V$ is transformed to 
\begin{equation}\label{eq:Vh-c}
\mathcal V_h=D\times \Big(0,\frac{\delta}h\Big)\,.
\end{equation} 
We obtain then the new quadratic form, and the new $L^2$-norm:
\begin{equation}\label{defQ}
\|u\|^2=h\|\psi\|^2\,,\quad q_h(u;\mathcal V)=h Q_h(\psi)\,,\quad Q_h(\psi):=Q(\psi):=Q^{\mathrm{tr}}(\psi)+Q^{\mathrm{bnd} }(\psi)\,,
\end{equation}
where
\[Q^{\mathrm{tr}}(\psi)=\int_{D}\int_0^{\delta/h} \Big( |\partial_{t}\psi|^2 |g(y',h t)|^{1/2}\dd t -|\psi(y',0)|^2|g(y',0)|^{1/2}\Big)  \dd y'\,,\]
and
\begin{multline*}
Q^{\mathrm{bnd} }(\psi)\\
=\int_{D} \int_0^{\delta/h}\sum_{k,\ell \in \{1,2\}}g^{k\ell}(y',h t)(-ih\partial_k-\tilde A_k(y',h t))\psi\overline{(-ih\partial_\ell-\tilde A_\ell(y',h t))\psi}|g(y',h t)|^{1/2}\dd t\dd y'\,.
\end{multline*}
The elements of the form domain satisfy
\[\psi\in H^1(D\times (0,\delta/h)),\quad~\psi=0{~\rm on~} (\partial D)\times (0,\delta/h)~ {\rm and~on~}D\times\{\delta/h\}\,.\]
The operator associated with $Q_h$ is denoted by $\mathscr{L}_h$, and its eigenvalues are denoted by $(\mu_n(h))_{n\geq 1}$.

\subsubsection{On the transverse operator}
Before defining our effective operator, one needs to introduce the following partial transverse quadratic form   
\[ f \mapsto q_{{h} , y'}(f)=\int_0^{\delta/h}| f '(t) |^2 |g(y',h t)|^{1/2}\dd t-|f(0)|^2|g(y',0)|^{1/2}\,,\]
in the ambient Hilbert space, $L^2((0,\delta/h),|g(y',h t)|^{1/2}\dd t)$, and defined on the form domain 
\[
\mathcal D (q_{{h} , y'}):= \{f~:~f,f'\in L^2((0,\delta/h),|g(y',h t)|^{1/2}\dd t)~{\rm and~}f(\delta/h)=0\}\,.\]
We denote by $\mu(h, y')$ the groundstate of the associated operator and by $f_{h, y'}$ the corresponding \emph{positive} and normalized (in $L^2((0,\delta/h),|g(y',h t)|^{1/2}\dd t)$) eigenfunction. Note that these depend smoothly on the variable $y'$,  by standard perturbation theory. We may prove, as in \cite[Sections 2.3 \& 7.2, with $T=\frac{\delta}{h}$, $B=h$]{HKR17}, that 
\begin{equation}\label{eq.muy'}
\mu(h,y')=-1-2h\kappa(y')+\mu^{[2]}(y')h^2+\mathcal{O}(h^3)\,,
\end{equation}
and, in the $L^2$-sense,
\begin{equation}\label{eq.deriv-f}
\partial_{y_k} f_{h,y'}=\mathcal{O}(h)\,.
\end{equation}
\subsubsection{Description of the effective operator}
Our effective operator is the self-adjoint operator, in the space $L^2(D)$, with domain $H^2(D)\cap H^1_0(D)$, and defined as follows
\begin{equation}\label{eq:eff-op-2D}
\mathscr{L}^{\mathrm{eff}}=\sum_{k\ell} (P_\ell \alpha_{k\ell}P_k+\beta_{k\ell} P_k+P_k { \beta_{k\ell}}+\gamma_{k\ell})+\mu(y',{h})-h^2\rho(y',h) \,,\end{equation}
where 
\begin{equation}\label{eq:Pk-op}
P_k=-i{h} \partial_k-\tilde A_k^0\,,\quad \tilde A_k^0(y')=\tilde A_k(y',0)\,,
\end{equation}
\begin{equation}\label{eq:coef-abg}
\begin{split}
	\alpha_{k\ell}&=\int_{0}^{\delta/{h} }f_{{h} ,y'}^2(t)g^{k\ell}(y',{h}  t)|g(y',{h}  t)|^{\frac 12}\dd t\\
	\beta_{k\ell}&=\int_{0}^{\delta/{h} }f_{{h} ,y'}^2(t)g^{k\ell}(y',{h}  t)|g(y',{h}  t)|^{\frac 12}(\tilde A^0_\ell-\tilde A_\ell)\dd t\\
	\gamma_{k\ell}&=\int_{0}^{\delta/{h} }f_{{h} ,y'}^2(t)g^{k\ell}(y',{h}  t)|g(y',{h}  t)|^{\frac 12}(\tilde A_k^0-\tilde A_k)(\tilde A_\ell^0-\tilde A_\ell)\dd t\,,
	\end{split}
	\end{equation}
	and 
	\begin{equation}\label{eq:rho-eff-op}
	\rho(y',h)=\sum_{k\ell}\partial_\ell\left(\int_0^{\delta/h}g^{k\ell}(y',{h}  t)f_{h,y'}\partial_kf_{h,y'}|g(y',{h}  t)|^{\frac 12}\dd t\right)\,.
	\end{equation}
	The coefficients $\alpha_{kl},\beta_{k\ell},\gamma_{k\ell}$  depend on ${h} $ and $y'$ only. Note that $(\alpha_{k\ell})$ and $(\gamma_{k\ell})$ are symmetric. 
	\begin{rem}\label{rem.abcr}
	We may notice that, due to the exponential decay of $f_{h,y'}$, we have, uniformly in $y'\in D$,
	\begin{equation}\label{eqinrema}
	\alpha=\alpha^{[0]}+h\alpha^{[1]}+\mathcal{O}(h^2)\,,\quad \beta=h\beta^{[1]}+\mathcal{O}(h^2)\,,\quad \gamma=h^2\gamma^{[2]}+\mathcal{O}(h^3)\,,
	\end{equation}
	and
\begin{equation}\label{eqinremb}
\rho=\mathcal{O}(h)\,.
\end{equation}
	\end{rem}

\subsection{Reduction to an effective operator}
The aim of this section is to prove the following proposition, whose proof is inspired by \cite{KKR16}.

\begin{proposition}\label{prop.eff}
For all $n\geq 1$, there exist $h_0>0$ and $C>0$ such that, for all $h\in(0,h_0]$,
\[|\mu_n(h)-\mu^{\mathrm{eff}}_n(h)|\leq Ch^3 \,.\]	
\end{proposition}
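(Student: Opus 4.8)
The plan is to compare the eigenvalues $\mu_n(h)$ of $\mathscr{L}_h$ (acting on $L^2(\mathcal V_h,|g|^{1/2}\dd t\,\dd y')$) with those of $\mathscr{L}^{\mathrm{eff}}$ by a two-sided application of the min-max principle, using a quasi-mode construction in one direction and a dimensional-reduction/projection argument in the other. The natural object mediating between the two is the fiber map $\Pi$ defined by $(\Pi\varphi)(y',t)=\varphi(y')\,f_{h,y'}(t)$, which isometrically embeds $L^2(D)$ into $L^2(\mathcal V_h,\dd m)$ because $f_{h,y'}$ is $L^2$-normalized on each fiber. I would write $Q_h(\Pi\varphi)$ out explicitly, expanding $P_k(\Pi\varphi)=(P_k\varphi)f_{h,y'}+\varphi\cdot(-ih\partial_k f_{h,y'})$, and collect the terms: the transverse part gives $\int_D\mu(h,y')|\varphi|^2\,\dd s$; the tangential part $g^{k\ell}$ contributions reassemble, after integrating the $t$-variable against $f_{h,y'}^2|g|^{1/2}$, into exactly $\sum_{k\ell}(P_\ell\alpha_{k\ell}P_k+\beta_{k\ell}P_k+P_k\beta_{k\ell}+\gamma_{k\ell})$ plus the cross terms in $\partial_k f_{h,y'}$; the latter, after an integration by parts in $y'$, produce the $-h^2\rho(y',h)$ correction, modulo a genuinely small remainder. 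The key quantitative input is \eqref{eq.deriv-f}, $\partial_{y_k}f_{h,y'}=\mathcal O(h)$ in $L^2$, which controls all the $\partial_k f_{h,y'}$-terms; together with the structure in Remark~\ref{rem.abcr} and the exponential decay of $f_{h,y'}$, the mismatch between $Q_h(\Pi\varphi)$ and $\langle\mathscr{L}^{\mathrm{eff}}\varphi,\varphi\rangle$ is $\mathcal O(h^3)\|\varphi\|^2$ plus lower-order terms controlled by $h^2\|P\varphi\|^2$, which is itself $\mathcal O(h^2)$ on the relevant low-energy states. Feeding eigenfunctions of $\mathscr{L}^{\mathrm{eff}}$ through $\Pi$ into the min-max for $\mathscr{L}_h$ gives $\mu_n(h)\le\mu_n^{\mathrm{eff}}(h)+Ch^3$.

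For the reverse inequality I would take an $L^2$-orthonormal family $\psi_1,\dots,\psi_n$ of eigenfunctions of $\mathscr{L}_h$ spanning the first $n$ eigenspaces, decompose each as $\psi_j=\Pi(\varphi_j)+w_j$ where $\varphi_j(y')=\langle \psi_j(y',\cdot),f_{h,y'}\rangle_{L^2_t}$ is the fiberwise projection onto the transverse ground state and $w_j$ lies in the pointwise-orthogonal complement (so $\langle w_j(y',\cdot),f_{h,y'}\rangle_{L^2_t}=0$ for a.e.\ $y'$). The spectral gap in the transverse problem — the second transverse eigenvalue is bounded away from $\mu(h,y')$ by a fixed constant (the transverse operator after rescaling is a half-line Robin problem with a $1$-eigenvalue near $-1$ and the rest near or above $0$) — forces $Q^{\mathrm{tr}}(w_j)\ge (c_0-\mathcal O(h))\|w_j\|^2$ with $c_0>0$, while the total energy $Q_h(\psi_j)=\mu_j(h)\|\psi_j\|^2$ is $\approx -\|\psi_j\|^2$; this yields $\|w_j\|=\mathcal O(h)$ and, refining, $\|w_j\|_{H^1}=\mathcal O(h)$ in the appropriate weighted norm. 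One then shows the $\varphi_j$ are almost orthonormal (errors $\mathcal O(h^2)$), that they lie in the form domain of $\mathscr{L}^{\mathrm{eff}}$, and that $\langle\mathscr{L}^{\mathrm{eff}}\varphi_j,\varphi_j\rangle\le Q_h(\psi_j)+Ch^3 = \mu_j(h)+Ch^3$, using the cross terms $Q_h(\Pi\varphi_j,w_j)$ being $\mathcal O(h^{3})$ after exploiting the orthogonality of $w_j$ to the transverse ground state (this is where the $\beta,\gamma$ structure and the $\partial_k f_{h,y'}$ bound are used again, plus a Cauchy--Schwarz splitting). Min-max for $\mathscr{L}^{\mathrm{eff}}$ over $\mathrm{Span}(\varphi_1,\dots,\varphi_n)$ then gives $\mu_n^{\mathrm{eff}}(h)\le\mu_n(h)+Ch^3$.

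I would organize the write-up around two lemmas: (a) the estimate $|Q_h(\Pi\varphi)-\langle\mathscr{L}^{\mathrm{eff}}\varphi,\varphi\rangle|\le Ch^3(\|\varphi\|^2+\|P\varphi\|^2)$ for $\varphi\in H^2\cap H^1_0(D)$, obtained by the explicit expansion and integration by parts described above; and (b) the \emph{a priori} bound $\|w_j\|_{H^1_{w}}=\mathcal O(h)$ for the transverse remainder of low-energy eigenfunctions, coming from the transverse spectral gap. Given (a) and (b), both inequalities of Proposition~\ref{prop.eff} follow from min-max as sketched. A technical point to be careful about: on the low-energy states $P_k\psi_j$ and $P_k\varphi_j$ are only $\mathcal O(\sqrt h)$ a priori in the naive bound but in fact $\mathcal O(h^{1/2})$ improves to what is needed because the effective potential $\mu(y',h)\approx -1-2h\kappa(y')$ has its minimum energy at level $-1+\mathcal O(h)$ and the quadratic (magnetic) part contributes at order $h$ — so $\|P\varphi_j\|^2=\mathcal O(h)$, making the error in (a) genuinely $\mathcal O(h^3)$; I would state this as a preliminary bound derived from \eqref{eq:ev-critical-n}.

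The main obstacle I expect is the bookkeeping in lemma~(a): one must show that the $\partial_k f_{h,y'}$ cross terms in $Q_h(\Pi\varphi)$ reassemble \emph{exactly} into the $-h^2\rho(y',h)$ term of $\mathscr{L}^{\mathrm{eff}}$ up to $\mathcal O(h^3)$, which requires an integration by parts in $y'$ and a careful use of $\partial_{y_k}f_{h,y'}=\mathcal O(h)$ together with $\partial_{y_k}\partial_{y_\ell}f_{h,y'}=\mathcal O(h)$ in the same $L^2$-sense (obtained by differentiating the eigenvalue equation for $f_{h,y'}$ once more and using the transverse resolvent estimate). Keeping track of which remainders are $\mathcal O(h^2)\|\varphi\|^2$ versus $\mathcal O(h^2)\|P\varphi\|^2$ versus truly $\mathcal O(h^3)$, and verifying that the former two are absorbed because $\|P\varphi\|^2=\mathcal O(h)$ on the relevant subspace, is the delicate part; everything else is standard semiclassical dimensional reduction in the spirit of \cite{KKR16,HKR17}.
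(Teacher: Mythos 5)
Your approach is essentially the same as the paper's. Both the upper bound (test the quadratic form on $\psi=f_{h,y'}\varphi$, expand using $\partial_{y_k}f_{h,y'}=\mathcal{O}(h)$ and the exponential decay of $f_{h,y'}$, reassemble the $\partial_k f$ cross terms into $-h^2\rho$ after integrating by parts) and the lower bound (decompose $\psi=\Pi\psi+\Pi^\perp\psi$, use the transverse spectral gap and the cross-term estimate corresponding to Proposition~\ref{prop.mix}) follow the paper's strategy in Sections 3.3--3.6.

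Two small imprecisions in your lower bound worth flagging, neither fatal. First, the naive spectral-gap argument gives $\|w_j\|=\mathcal{O}(h^{1/2})$, not $\mathcal{O}(h)$: the energy gap between $\mu_j(h)$ and $\min_{y'}\mu(h,y')$ is only $\mathcal{O}(h)$, so balancing against the $\mathcal{O}(1)$ transverse gap yields $\|w_j\|^2=\mathcal{O}(h)$. Second, the cross term $Q_h(\Pi\varphi_j,w_j)$ is not literally $\mathcal{O}(h^3)$; after a Cauchy--Schwarz split it takes the form $\varepsilon\bigl(\|w_j\|^2+\sum_\ell\|P_\ell w_j\|^2\bigr)+C_\varepsilon h^2\bigl(\sum_\ell\|P_\ell\Pi\psi_j\|^2+h^2\|\Pi\psi_j\|^2\bigr)$, and the $\varepsilon$-piece must be \emph{absorbed} by the strictly positive $Q^{\mathrm{tr}}(w_j)+Q^{\mathrm{bnd}}(w_j)$ (choosing $\varepsilon$ small but $h$-independent), not estimated directly. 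After absorption, the remainder $C_\varepsilon h^2\bigl(\sum_\ell\|P_\ell\Pi\psi_j\|^2+h^2\|\Pi\psi_j\|^2\bigr)$ is genuinely $\mathcal{O}(h^3)$ because $\|P_\ell\Pi\psi_j\|^2=\mathcal{O}(h)$ on the low-energy states, which is precisely how the paper concludes in Section~\ref{sec.proofeff}. Your packaging of the lower bound (explicit eigenfunctions $\psi_j$, almost-orthonormal projections $\varphi_j$, min-max over their span) differs slightly from the paper's (which instead derives the perturbed operator $\widetilde{\mathscr{L}}^{\mathrm{eff}}$ and applies min-max twice), but the two are equivalent in substance.
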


\subsubsection{Upper bound}
\begin{lemma}
Consider
\[\psi(y',t)=f_{{h} ,y'}(t)\varphi(y')\,,\]
with $\varphi\in H^1_0(D)$. We write
\[\begin{split}
Q(\psi)=\int_D\mu(y',{h} )|\varphi(y')|^2\dd y'+Q^{\mathrm{tg}}(\varphi) +E_h(\varphi)
\end{split}
\,,\]
where
\begin{equation} \label{defQt}
\begin{split}&Q^{\mathrm{tg}}(\varphi)=\\
&\int_{\mathcal{V}_{h} }f_{{h} ,y'}^2\sum_{k\ell}g^{k\ell}(y',{h}  t)(-i{h} \partial_k-\tilde A_k(y',{h}  t))\varphi\overline{(-i{h} \partial_\ell-\tilde A_\ell(y',{h}  t))\varphi}|g(y',{h}  t)|^{\frac 12}\dd y'\dd t\,,\end{split}
\end{equation}
and  $\mathcal{V}_{h}$ is  introduced in \eqref{eq:Vh-c}.
Then the term $E_h(\varphi)$ satisfies
\[| E_h(\varphi)-E_h^0(\varphi)|\leq Ch^3\|\varphi\|^2\,,\quad E_h^0(\varphi)=-h^2\left\langle\rho(y',h)\varphi,\varphi\right\rangle\,.\]
where $\rho(y',h)$ is introduced in \eqref{eq:rho-eff-op}.
\end{lemma}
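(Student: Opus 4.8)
The plan is to insert $\psi(y',t)=f_{h,y'}(t)\varphi(y')$ into the splitting $Q(\psi)=Q^{\mathrm{tr}}(\psi)+Q^{\mathrm{bnd}}(\psi)$ and expand, using systematically that $f_{h,y'}$ and $\partial_{y_k}f_{h,y'}$ are real-valued, that $f_{h,y'}$ is normalized in the weighted space $L^2\big((0,\delta/h),|g(y',ht)|^{1/2}\dd t\big)$ and minimizes $q_{h,y'}$, and that $y'\mapsto f_{h,y'}$ is smooth with $\partial_{y_k}f_{h,y'}=\mathcal O(h)$ in that weighted norm (see \eqref{eq.deriv-f}). First I would treat the transverse part: since $\partial_t\psi=f_{h,y'}'(t)\varphi(y')$ and $\psi(y',0)=f_{h,y'}(0)\varphi(y')$, one pulls $|\varphi(y')|^2$ out of the $t$-integral and recognizes
\[ Q^{\mathrm{tr}}(\psi)=\int_D|\varphi(y')|^2\,q_{h,y'}(f_{h,y'})\,\dd y'=\int_D\mu(y',h)\,|\varphi(y')|^2\,\dd y'\,,\]
which is exactly the first term on the right-hand side of the claimed decomposition.

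For $Q^{\mathrm{bnd}}(\psi)$ I would apply the Leibniz rule in the tangential variables: for $k\in\{1,2\}$,
\[ (-ih\partial_k-\tilde A_k)(f_{h,y'}\varphi)=f_{h,y'}\,(-ih\partial_k-\tilde A_k)\varphi-ih(\partial_k f_{h,y'})\,\varphi\,.\]
Substituting this together with its complex conjugate (here the reality of $f_{h,y'}$, $\partial_k f_{h,y'}$ and $\tilde A_k$ is used) into $\sum_{k\ell}g^{k\ell}|g|^{1/2}(\cdots)\overline{(\cdots)}$ and integrating over $\mathcal V_h$ produces four contributions: the pure tangential term $\sum_{k\ell}\int_{\mathcal V_h}f_{h,y'}^2\,g^{k\ell}|g|^{1/2}(-ih\partial_k-\tilde A_k)\varphi\,\overline{(-ih\partial_\ell-\tilde A_\ell)\varphi}$, which is precisely $Q^{\mathrm{tg}}(\varphi)$ as defined in \eqref{defQt}; two cross terms linear in $\partial_k f_{h,y'}$; and one term quadratic in $\partial_k f_{h,y'}$. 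By the very definition of $E_h(\varphi)$ in the lemma, $E_h(\varphi)$ is then exactly the sum of these last three contributions.

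To handle the cross terms I would use the symmetry $g^{k\ell}=g^{\ell k}$ to combine them, then the elementary identity $(-ih\partial_k-\tilde A_k)\varphi\cdot\overline{\varphi}-\varphi\cdot\overline{(-ih\partial_k-\tilde A_k)\varphi}=-ih\,\partial_k(|\varphi|^2)$, which recasts the two cross terms together as
\[ h^2\sum_{k\ell}\int_D\left(\int_0^{\delta/h}g^{\ell k}(y',ht)\,f_{h,y'}\,\partial_\ell f_{h,y'}\,|g(y',ht)|^{1/2}\,\dd t\right)\partial_k\big(|\varphi|^2\big)\,\dd y'\,.\]
Since $\varphi\in H^1_0(D)$, an integration by parts in $y_k$ (no boundary contribution, legitimate by smoothness of $y'\mapsto f_{h,y'}$) followed by the relabelling $k\leftrightarrow\ell$ turns this into $-h^2\langle\rho(y',h)\varphi,\varphi\rangle=E_h^0(\varphi)$, using exactly the definition \eqref{eq:rho-eff-op} of $\rho$. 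Finally, the term quadratic in $\partial_k f_{h,y'}$ is estimated, via the boundedness of $g^{k\ell},|g|^{1/2}$ and Cauchy--Schwarz in $t$, by $Ch^2\big(\sup_{y'}\|\partial_k f_{h,y'}\|_{w}\big)\big(\sup_{y'}\|\partial_\ell f_{h,y'}\|_{w}\big)\,\|\varphi\|^2$, where $\|\cdot\|_w$ is the weighted $L^2$-norm in $t$; by \eqref{eq.deriv-f} this is $\mathcal O(h^4\|\varphi\|^2)$. Collecting everything gives $|E_h(\varphi)-E_h^0(\varphi)|\le Ch^3\|\varphi\|^2$ (indeed $\mathcal O(h^4)$).

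The only genuinely delicate point is the bookkeeping in the cross terms: one must keep track of which factor carries the conjugation, exploit the reality of $f_{h,y'}$ and the symmetry of $(g^{k\ell})$, and relabel the summation indices so that after the integration by parts the outcome is exactly $\sum_{k\ell}\partial_\ell\big(\int_0^{\delta/h}g^{k\ell}f_{h,y'}\partial_k f_{h,y'}|g|^{1/2}\dd t\big)=\rho(y',h)$ rather than some other index contraction; a sign slip or a mismatch there would break the identification with the $-h^2\rho$ term in \eqref{eq:eff-op-2D}. The remaining ingredients — the transverse computation and the $\mathcal O(h^4)$ bound on the last term — are routine once \eqref{eq.deriv-f} and the exponential decay of $f_{h,y'}$ are in hand.
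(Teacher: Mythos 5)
Your proof is correct, and structurally it follows the same plan as the paper: insert $\psi=f_{h,y'}\varphi$, pull $|\varphi(y')|^2$ out of the $t$-integral in $Q^{\mathrm{tr}}$ to produce $\int_D\mu(y',h)|\varphi|^2\,\dd y'$, and apply the Leibniz rule in $Q^{\mathrm{bnd}}$ to separate $Q^{\mathrm{tg}}(\varphi)$ from the cross terms and the term quadratic in $\partial_k f_{h,y'}$, the latter two constituting $E_h(\varphi)$. Where you genuinely diverge is in the treatment of the cross terms. The paper first replaces $\tilde A_k(y',ht)$ by its boundary value $\tilde A_k^0(y')$ so as to express things through the operators $P_k=-ih\partial_k-\tilde A_k^0$; this replacement, combined with $\partial_k f_{h,y'}=\mathcal O(h)$ and the exponential decay of $f_{h,y'}$, costs an $\mathcal O(h^3)$ error, and the resulting $E_h^0$ is then reduced to $-h^2\langle\rho\varphi,\varphi\rangle$ via the commutator $[P_\ell,\tilde\beta_{k\ell}]=-ih\,\partial_\ell\tilde\beta_{k\ell}$ and the anti-Hermiticity $\overline{\tilde\beta_{\ell k}}=-\tilde\beta_{\ell k}$. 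You avoid that detour entirely: after combining the two cross terms using $g^{k\ell}=g^{\ell k}$ and a relabelling, the identity $(-ih\partial_k-\tilde A_k)\varphi\,\overline{\varphi}-\varphi\,\overline{(-ih\partial_k-\tilde A_k)\varphi}=-ih\,\partial_k(|\varphi|^2)$ shows that $\tilde A_k$ cancels exactly, and a single integration by parts in $y'$ (legitimate since $\varphi\in H_0^1(D)$) lands directly on $-h^2\langle\rho\varphi,\varphi\rangle$. This yields the exact identity $E_h(\varphi)-E_h^0(\varphi)=\big(\text{term quadratic in }\partial_k f_{h,y'}\big)$, which you then bound by $\mathcal O(h^4)\|\varphi\|^2$ via Cauchy--Schwarz in $t$ and \eqref{eq.deriv-f} — in fact sharper than the stated $Ch^3\|\varphi\|^2$. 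The index bookkeeping that you flag as the delicate point is handled correctly; the sign and the contraction match \eqref{eq:rho-eff-op}.
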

\begin{proof}
The term $E_h(\varphi)$ comes from the fact that $f_{h,y'}$ depends on $y'$. We have
\[\begin{split}&E_h(\varphi)=\\
&\int_{\mathcal{V}_{h} }\sum_{k\ell}g^{k\ell}(y',{h}  t)[(-i{h} \partial_k-\tilde A_k(y',{h}  t)),f_{h,y'}]\varphi\overline{(-i{h} \partial_\ell-\tilde A_\ell(y',{h}  t))\psi}|g(y',{h}  t)|^{\frac 12}\dd y'\dd t\\
&+\int_{\mathcal{V}_{h} }\sum_{k\ell}g^{k\ell}(y',{h}  t)f_{h,y'}(t)(-i{h} \partial_k-\tilde A_k(y',{h}  t))\varphi\overline{[(-i{h} \partial_\ell-\tilde A_\ell(y',{h}  t)), f_{h,y'}]\varphi}|g(y',{h}  t)|^{\frac 12}\dd y'\dd t\,.
\end{split}\]
Let us first estimate the error term $E_h(\varphi)$. We have
\[\begin{split}&E_h(\varphi)=\\
&-ih\int_{\mathcal{V}_{h} }\sum_{k\ell}g^{k\ell}(y',{h}  t)\partial_kf_{h,y'}\varphi\overline{(-i{h} \partial_\ell-\tilde A_\ell(y',{h}  t))\psi}|g(y',{h}  t)|^{\frac 12}\dd y'\dd t\\
&+ih\int_{\mathcal{V}_{h} }\sum_{k\ell}g^{k\ell}(y',{h}  t)f_{h,y'}(t)(-i{h} \partial_k-\tilde A_k(y',{h}  t))\varphi\partial_\ell f_{h,y'}\overline{\varphi}|g(y',{h}  t)|^{\frac 12}\dd y'\dd t\,,
\end{split}\]
and then 
\[\begin{split}&E_h(\varphi)=\\
&-ih\int_{\mathcal{V}_{h} }\sum_{k\ell}g^{k\ell}(y',{h}  t)f_{h,y'}\partial_kf_{h,y'}\varphi\overline{(-i{h} \partial_\ell-\tilde A_\ell(y',{h}  t))\varphi}|g(y',{h}  t)|^{\frac 12}\dd y'\dd t\\
&+h^2\int_{\mathcal{V}_{h} }\sum_{k\ell}g^{k\ell}(y',{h}  t)\partial_kf_{h,y'}\partial_\ell f_{h,y'}|\varphi|^2|g(y',{h}  t)|^{\frac 12}\dd y'\dd t\\
&+ih\int_{\mathcal{V}_{h} }\sum_{k\ell}g^{k\ell}(y',{h}  t)f_{h,y'}(t)(-i{h} \partial_k-\tilde A_k(y',{h}  t))\varphi\partial_\ell f_{h,y'}\overline{\varphi}|g(y',{h}  t)|^{\frac 12}\dd y'\dd t\,.
\end{split}\]
Let us now replace $\tilde A_k(y',{h}  t)$ by $\tilde A_k^0(y')$.
We get
\[\begin{split}&E_h(\varphi)=\\
&-ih\int_{\mathcal{V}_{h} }\sum_{k\ell}g^{k\ell}(y',{h}  t)f_{h,y'}\partial_kf_{h,y'}\varphi\overline{(-i{h} \partial_\ell-\tilde A^0_\ell(y')\varphi}|g(y',{h}  t)|^{\frac 12}\dd y'\dd t\\
&+h^2\int_{\mathcal{V}_{h} }\sum_{k\ell}g^{k\ell}(y',{h}  t)\partial_kf_{h,y'}\partial_\ell f_{h,y'}|\varphi|^2|g(y',{h}  t)|^{\frac 12}\dd y'\dd t\\
&+ih\int_{\mathcal{V}_{h} }\sum_{k\ell}g^{k\ell}(y',{h}  t)f_{h,y'}(t)(-i{h} \partial_k-\tilde A^0_k(y')\varphi\partial_\ell f_{h,y'}\overline{\varphi}|g(y',{h}  t)|^{\frac 12}\dd y'\dd t\\
&-ih\int_{\mathcal{V}_{h} }\sum_{k\ell}g^{k\ell}(y',{h}  t)f_{h,y'}\partial_kf_{h,y'}\varphi\overline{(\tilde A^0_\ell(y')-\tilde A_k(y',ht))\varphi}|g(y',{h}  t)|^{\frac 12}\dd y'\dd t\\
&+ih\int_{\mathcal{V}_{h} }\sum_{k\ell}g^{k\ell}(y',{h}  t)f_{h,y'}(t)(\tilde A^0_k(y')-\tilde A_k(y',ht))\varphi\partial_\ell f_{h,y'}\overline{\varphi}|g(y',{h}  t)|^{\frac 12}\dd y'\dd t\,.
\end{split}\]
 We recall   from \eqref{eq.deriv-f}  that $\partial_k f_{h,y'}=\mathcal{O}(h)$.
  Remembering the definition of  the operators $P_k$ introduced in \eqref{eq:Pk-op},  we have
\[| E_h(\varphi)-E_h^0(\varphi)|\leq Ch^3\|\varphi\|^2\,,\]
with
\[E_h^0(\varphi)=\sum_{k\ell}\int_D \left[P_k\varphi\,\overline{\tilde\beta_{\ell k}\varphi}+\tilde\beta_{k\ell}\varphi \overline{P_\ell\varphi}\right]  \dd y'=\sum_{k\ell}\langle \left(\overline{\tilde\beta_{\ell k}}P_k+P_\ell\tilde\beta_{k\ell}\right)\varphi, \varphi\rangle\,,\]
and
\[\tilde \beta_{k\ell}=-ih\int_0^{\delta/h}g^{k\ell}(y',{h}  t)f_{h,y'}\partial_kf_{h,y'}|G(y',{h}  t)|^{\frac 12}\dd t\,.\]
We notice that
\[\begin{split}
E_h^0(\varphi)&=\langle\sum_{k\ell}\left(\overline{\tilde\beta_{\ell k}}P_k+\tilde\beta_{k\ell}P_\ell\right)\varphi,\varphi\rangle-ih\langle\sum_{k\ell}\partial_\ell\tilde\beta_{k\ell}\varphi,\varphi\rangle\\
&=\langle\sum_{k\ell}\left(\overline{\tilde\beta_{\ell k}}P_k+\tilde\beta_{\ell k}P_k\right)\varphi,\varphi\rangle-ih\langle\sum_{k\ell}\partial_\ell\tilde\beta_{k\ell}\varphi,\varphi\rangle\\
&=-ih\langle\sum_{k\ell}\partial_\ell\tilde\beta_{k\ell}\varphi,\varphi\rangle\,.
\end{split}\]
\end{proof}
Let us now deal with $Q^{\mathrm{tg}}(\varphi)$. 
\begin{lemma}\label{Lemma32}
	We have
	\[Q^{\mathrm{tg}}(\varphi)=Q^{\mathrm{tg}}_0(\varphi)+R_{h} (\varphi)\,,\quad Q^{\mathrm{tg}}_0(\varphi)=\sum_{k\ell}\int_D\alpha_{k\ell}(-i{h} \partial_k-\tilde A^0_k)\varphi\overline{(-i{h} \partial_\ell-\tilde A^0_\ell)\varphi}\dd y'\,,\]
with
\[R_{{h} }(\varphi)=\sum_{k\ell}\int_D \beta_{k\ell}\left[(-i{h} \partial_k-\tilde A^0_k)\varphi\, \overline{\varphi}+\varphi \overline{(-i{h} \partial_k-\tilde A^0_k)\varphi}\right]+\gamma_{k\ell}|\varphi|^2\dd y'\,,\]
and the coefficients $\alpha_{k\ell},\beta_{k\ell},\gamma_{k\ell}$ are introduced in \eqref{eq:coef-abg}.
\end{lemma}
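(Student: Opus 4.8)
The plan is to prove the lemma by a direct algebraic rearrangement of the integrand in the definition \eqref{defQt} of $Q^{\mathrm{tg}}$; the identity asserted is exact, with no asymptotics involved. The key observation is the elementary splitting
\[
(-ih\partial_k-\tilde A_k(y',ht))\varphi = P_k\varphi + \big(\tilde A_k^0(y')-\tilde A_k(y',ht)\big)\varphi\,,
\]
where $P_k=-ih\partial_k-\tilde A_k^0$ is the operator introduced in \eqref{eq:Pk-op} and $\tilde A_k^0(y')=\tilde A_k(y',0)$; this rewriting is legitimate because $\varphi$ depends only on $y'$, so only the tangential derivatives $\partial_k$, $k\in\{1,2\}$, act on it.

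First I would substitute this splitting into both factors of the integrand of $Q^{\mathrm{tg}}(\varphi)$ and expand the product, obtaining the four contributions
\[
P_k\varphi\,\overline{P_\ell\varphi}\,,\qquad \big(\tilde A_\ell^0-\tilde A_\ell\big)P_k\varphi\,\overline{\varphi}\,,\qquad \big(\tilde A_k^0-\tilde A_k\big)\varphi\,\overline{P_\ell\varphi}\,,\qquad \big(\tilde A_k^0-\tilde A_k\big)\big(\tilde A_\ell^0-\tilde A_\ell\big)|\varphi|^2\,.
\]
Each contribution is multiplied by $f_{h,y'}^2(t)\,g^{k\ell}(y',ht)\,|g(y',ht)|^{1/2}$ and integrated over $(y',t)\in\mathcal{V}_h=D\times(0,\delta/h)$. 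Since $\varphi$, $\overline\varphi$, $P_k\varphi$ and $\overline{P_\ell\varphi}$ are independent of $t$, I can perform the $t$-integration first, and the resulting weights are exactly the coefficients $\alpha_{k\ell}$, $\beta_{k\ell}$, $\gamma_{k\ell}$ defined in \eqref{eq:coef-abg}. The first contribution then produces $\sum_{k\ell}\int_D\alpha_{k\ell}P_k\varphi\,\overline{P_\ell\varphi}\,\dd y'=Q^{\mathrm{tg}}_0(\varphi)$, the second produces $\sum_{k\ell}\int_D\beta_{k\ell}P_k\varphi\,\overline{\varphi}\,\dd y'$, and the fourth produces $\sum_{k\ell}\int_D\gamma_{k\ell}|\varphi|^2\,\dd y'$.

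The only point needing a moment of care is the third contribution: there the $t$-integral of $f_{h,y'}^2\,g^{k\ell}\,|g|^{1/2}\,(\tilde A_k^0-\tilde A_k)$ equals $\beta_{\ell k}$ rather than $\beta_{k\ell}$, so I would invoke the symmetry $g^{k\ell}=g^{\ell k}$ and relabel the summation indices $k\leftrightarrow\ell$ to rewrite it as $\sum_{k\ell}\int_D\beta_{k\ell}\varphi\,\overline{P_k\varphi}\,\dd y'$. Adding the second and third contributions reproduces the bracketed term in the definition of $R_h(\varphi)$, and the four contributions together give $Q^{\mathrm{tg}}_0(\varphi)+R_h(\varphi)$, which is the claim. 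There is no genuine obstacle here; the whole content of the lemma is the bookkeeping that performs the $t$-integration and thereby reduces $Q^{\mathrm{tg}}$ to an expression living on $D$ only, with the weights $f_{h,y'}$, $g$ and $\tilde A$ absorbed into the coefficients $\alpha,\beta,\gamma$.
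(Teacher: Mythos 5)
Your proof is correct and follows essentially the same route as the paper: substitute $(-ih\partial_k-\tilde A_k)\varphi = P_k\varphi + (\tilde A_k^0-\tilde A_k)\varphi$ (the paper does this in two stages, one factor at a time; you do both at once), expand the product into four terms, and apply Fubini to identify the $t$-integrals with $\alpha_{k\ell},\beta_{k\ell},\gamma_{k\ell}$. Your explicit handling of the cross term via $g^{k\ell}=g^{\ell k}$ and the relabeling $k\leftrightarrow\ell$ correctly fills in a step that the paper leaves implicit under "applying the Fubini theorem."
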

\begin{proof}
We have
\[\begin{split}&Q^{\mathrm{tg}}(\varphi)=\\
&\int_{\mathcal{V}_{h} }f_{{h} ,y'}^2\sum_{k\ell}g^{k\ell}(y',{h}  t)(-i{h} \partial_k-\tilde A^0_k(y'))\varphi\overline{(-i{h} \partial_\ell-\tilde A_\ell(y',{h}  t))\varphi}|g(y',{h}  t)|^{\frac 12}\dd y'\dd t\\
&+\int_{\mathcal{V}_{h} }f_{{h} ,y'}^2\sum_{k\ell}g^{k\ell}(y',{h}  t)(\tilde A_k^0-\tilde A_k)\varphi\overline{(-i{h} \partial_\ell-\tilde A_\ell(y',{h}  t))\varphi}|g(y',{h}  t)|^{\frac 12}\dd y'\dd t\,,
\end{split}\]
and then
\[\begin{split}Q^{\mathrm{tg}}(\varphi)=
\int_{\mathcal{V}_{h} }f_{{h} ,y'}^2\sum_{k\ell}g^{k\ell}(y',{h}  t)(-i{h} \partial_k-\tilde A^0_k)\varphi\overline{(-i{h} \partial_\ell-\tilde A^0_\ell)\varphi}|g(y',{h}  t)|^{\frac 12}\dd y'\dd t+R_{h} (\varphi)\,,
\end{split}\]
where
\[\begin{split}
R_{h} (\varphi)=&\int_{\mathcal{V}_{h} }f_{{h} ,y'}^2\sum_{k\ell}g^{k\ell}(y',{h}  t)(-i{h} \partial_k-\tilde A^0_k)\varphi\overline{(\tilde A^0_\ell-\tilde A_\ell)\varphi}|g(y',{h}  t)|^{\frac 12}\dd y'\dd t\\
&+\int_{\mathcal{V}_{h} }f_{{h} ,y'}^2\sum_{k\ell}g^{k\ell}(y',{h}  t)(\tilde A_k^0-\tilde A_k)\varphi\overline{(-i{h} \partial_\ell-\tilde A^0_\ell)\varphi}|g(y',{h}  t)|^{\frac 12}\dd y'\dd t\\
&+\int_{\mathcal{V}_{h} }f_{{h} ,y'}^2\sum_{k\ell}g^{k\ell}(y',{h}  t)(\tilde A_k^0-\tilde A_k)(\tilde A_\ell^0-\tilde A_\ell)|\varphi|^2|g(y',{h}  t)|^{\frac 12}\dd y'\dd t\,.
\end{split}\]
Applying the Fubini theorem, we get the result.
\end{proof}
The (self-adjoint) operator associated with $Q^{\mathrm{tg}}$, on the Hilbert space  $L^2(D)$ (with the canonical  scalar product), is
\begin{equation}\label{eq:3.21}
\mathscr{L}^{\mathrm{tg}}=\sum_{k\ell} (P_\ell \alpha_{k\ell}P_k+\beta_{k\ell} P_k+P_k\beta_{k\ell}+\gamma_{k\ell})=\sum_{k\ell} P_\ell \alpha_{k\ell}P_k+\sum_k (\hat \beta_k P_k+P_k \hat \beta_{k})+  \gamma \,,
\end{equation}
 where $\hat\beta_k=\sum\limits_{\ell}\beta_{k\ell}$ and $\gamma=\sum_{k\ell}\gamma_{k\ell}$.\\

Therefore, we arrive, modulo remainders of order $\mathcal{O}(h^3)$, at the effective operator introduced in \eqref{eq:eff-op-2D}, which can be written in the form
\begin{equation} \label{eq:3.22}
\mathscr{L}^{\mathrm{eff}}=\mathscr{L}^{\mathrm{tg}}+\mu(y',{h})-h^2\rho(y',h) \,.
\end{equation}
The min-max theorem implies that, for all $n\geq 1$,
\begin{equation}\label{eq.ub}
\mu_n(h)\leq \mu_n^{\mathrm{eff}}(h)+Ch^3 \,.
\end{equation}

\subsection{Lower bound}~\\
For every $y'$, we  introduce the  projection $\pi_{y'}$
on the ground state $f_{h,y'}$  of the transverse operator, which acts on the space $L^2( (0,\delta/h);|g(y',h t)|^{1/2}\dd t)$ as follows
\begin{equation}\label{eq:pi-y'}
\pi_{y'} f=f_{{h} ,y'}\langle f, f_{{h} ,y'}(t)\rangle_{L^2((0,\delta/h),|g(y',h t)|^{1/2}\dd t)}\,.
\end{equation}
Also we denote by $\pi_{y'}^\bot={\rm Id}-\pi_{y'}$, which is orthogonal to $\pi_{y'}$. 

Now we define the projections $\Pi$ and $\Pi^\bot$ acting on  $\psi\in L^2(\mathcal V_h)$ as follows ($\mathcal V_h$ is introduced in \eqref{eq:Vh-c})
\begin{equation}\label{defPi}
\Pi \psi (y',\cdot) =\pi_{y'} \psi (y',\cdot)f_{h,y'}(t)\varphi(y') \quad{\rm and}~\Pi^\perp \psi (y',\cdot) =\pi_{y'}^\perp  \psi (y',\cdot)\,,
\end{equation}
where we write
\[\varphi(y') =\int_0^{\delta/h} f_{{h} ,y'}(t)\overline{\psi(y',t)}\,|g(y',h t)|^{1/2}\dd t\,.\]
Note that, for all every $y'\in D$, we have 
\[\int_0^{\delta/h} \Pi\psi (y',t) \overline{ \Pi^\bot\psi(y',t)}\,|g(y',h t)|^{1/2}\dd t=0\,,\]
thereby allowing us to decompose the quadratic form $Q$ (see \eqref{defQ}) as follows
\[Q(\psi)=Q^{\mathrm{tr}}(\Pi\psi)+Q^{\mathrm{tr}}(\Pi^\perp\psi)+Q^{\mathrm{bnd}}(\Pi\psi+\Pi^\perp\psi)\,,\]
 for all $\psi\in H^1(\mathcal V_h)$ which vanishes on $y_3=\delta/h$ (see \eqref{eq:Vh-c}).
Then,
\begin{equation}\label{eq.decompose}
Q(\psi)\geq Q^{\mathrm{tr}}(\Pi\psi)-C{h} \|\Pi^\perp\psi\|^2+Q^{\mathrm{bnd}}(\Pi\psi)+Q^{\mathrm{bnd}}(\Pi^\perp\psi)+2\Re Q^{\mathrm{bnd}}(\Pi\psi,\Pi^\perp\psi)\,.
\end{equation}
We must deal with the last terms. These terms are in the form
\[\mathscr{J}_{k\ell}({h} )=\int_{\mathcal{V}_{h} }g^{k\ell}(y',{h}  t)(-i{h} \partial_k-\tilde A_k(y',{h}  t))\Pi\psi\overline{(-i{h} \partial_\ell-\tilde A_\ell(y',{h}  t))\Pi^\perp\psi}|g(y',{h}  t)|^{\frac 12}\dd y'\dd t\,.\]

\begin{lemma}
	We have
	\[\begin{split}
	|\mathscr{J}_{k\ell}({h} )|\leq&|\widetilde{\mathscr{J}^0_{k\ell}}({h} )|
	+C{h} ^2\|\Pi\psi\|\|P_\ell\Pi^\perp\psi\|+C{h} ^2\|P_k\Pi\psi\|\|\Pi^\perp\psi\|+C{h} ^2\|\Pi\psi\|\|\Pi^\perp\psi\|\\
	&+C_\varepsilon{h} ^2\|P_\ell\Pi\psi\|^2+C_\varepsilon{h} ^2\|P_k\Pi\psi\|^2+\varepsilon(\|\Pi^\perp\psi\|^2+\|P_\ell\Pi^\perp\psi\|^2)\,,
	\end{split}
	\]	
	where
	\[\widetilde{\mathscr{J}^0_{k\ell}}({h} )=\int_{\mathcal{V}_{h} }g^{k\ell}(y',0)(-i{h} \partial_k-\tilde A^0_k)\Pi\psi\overline{(-i{h} \partial_\ell-\tilde A_\ell^0)\Pi^\perp\psi}|g(y',{h}  t)|^{\frac 12}\dd y'\dd t\,,\]
	and $\mathcal V_h$ introduced in \eqref{eq:Vh-c}.
\end{lemma}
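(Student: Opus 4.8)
The plan is to estimate the cross term $\mathscr{J}_{k\ell}(h)$ by systematically replacing each ingredient by its value at the boundary $\{t=0\}$ and tracking the errors. First I would split $\mathscr{J}_{k\ell}(h)$ into a principal piece where $g^{k\ell}(y',ht)$ is replaced by $g^{k\ell}(y',0)$ and $\tilde A_k(y',ht)$ by $\tilde A_k^0(y')$, plus correction terms. The key analytic input is the exponential decay of $f_{h,y'}$ in the rescaled variable $t$ (used already to get \eqref{eqinrema}), which makes the effective range of $t$ of size $\mathcal{O}(1)$; combined with the smoothness estimates $g^{k\ell}(y',ht)-g^{k\ell}(y',0)=\mathcal O(ht)$ and $\tilde A_k(y',ht)-\tilde A_k^0(y')=\mathcal O(ht)$ from \eqref{eq:g,ij*} and Taylor expansion, and using that $\Pi\psi(y',t)=f_{h,y'}(t)\varphi(y')$ carries the same decay, each such replacement costs a factor $h$.

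Next I would organize the errors coming from the commutators. Since $\partial_k$ acts on the $y'$-dependence of $f_{h,y'}$, and \eqref{eq.deriv-f} gives $\partial_{y_k}f_{h,y'}=\mathcal O(h)$ in $L^2$, the terms where a derivative hits the transverse profile rather than $\varphi$ produce an extra power of $h$; these are exactly the terms that, after also replacing $\tilde A_k$ by $\tilde A_k^0$, become the $\mathcal O(h^2)$-type contributions $Ch^2\|\Pi\psi\|\|P_\ell\Pi^\perp\psi\|$ etc. in the statement. For the terms that still involve $P_k\Pi\psi$ (not just $\Pi\psi$), I would keep them as $h^2\|P_k\Pi\psi\|^2$-type remainders, using a Cauchy--Schwarz with a weight so that the $\Pi^\perp$-factors can either be absorbed into the $\varepsilon$-terms $\varepsilon(\|\Pi^\perp\psi\|^2+\|P_\ell\Pi^\perp\psi\|^2)$ or paired against an $\mathcal O(h^2)$ coefficient. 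The residual genuinely-boundary cross term is $\widetilde{\mathscr J^0_{k\ell}}(h)$, which is left untouched for the subsequent argument (where orthogonality of $\Pi$ and $\Pi^\perp$ in the weighted $L^2$ will be exploited to show it too is small).

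Concretely, the order of operations would be: (1) write $\mathscr J_{k\ell}(h)-\widetilde{\mathscr J^0_{k\ell}}(h)$ as a sum over the replacements $g^{k\ell}(y',ht)\to g^{k\ell}(y',0)$ and $\tilde A_j(y',ht)\to\tilde A_j^0(y')$ (for $j=k,\ell$), each generating a factor $\mathcal O(ht)$; (2) on $\Pi\psi$ distribute the operator $-ih\partial_k-\tilde A_k$ via Leibniz, separating the piece hitting $\varphi$ (giving $P_k\Pi\psi$-type factors) from the piece hitting $f_{h,y'}$ (giving, by \eqref{eq.deriv-f}, an extra $h$ and hence $h\|\Pi\psi\|$-type factors); (3) bound $\int_0^{\delta/h}t^2f_{h,y'}^2|g|^{1/2}\dd t=\mathcal O(1)$ and similar moment integrals using exponential decay; (4) apply Cauchy--Schwarz, using Young's inequality with parameter $\varepsilon$ precisely on the products of an $\mathcal O(h)$- or $\mathcal O(h^2)$-small quantity with a $\Pi^\perp$-factor that we are not yet allowed to control by $h$, so as to produce the stated $\varepsilon(\|\Pi^\perp\psi\|^2+\|P_\ell\Pi^\perp\psi\|^2)$ and $C_\varepsilon h^2(\|P_\ell\Pi\psi\|^2+\|P_k\Pi\psi\|^2)$ terms. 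I expect the main obstacle to be bookkeeping: keeping careful track of which factor in each product is small in $h$ versus which must be absorbed into an $\varepsilon$-term, so that no term is counted with the wrong power of $h$; in particular the terms where $P_k$ still acts on $\Pi\psi$ (not reduced to $\|\Pi\psi\|$) must be matched against coefficients that are genuinely $\mathcal O(h^2)$, which requires that the associated $\Pi^\perp$-factors have already picked up either an $\mathcal O(h)$ from a replacement or be relegated to the $\varepsilon$-reservoir. The decay of $f_{h,y'}$ and the estimate \eqref{eq.deriv-f} are the two facts that make everything close.
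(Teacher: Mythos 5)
Your overall strategy — the two-step replacement $g^{k\ell}(y',ht)\to g^{k\ell}(y',0)$ and $\tilde A_j(y',ht)\to\tilde A_j^0$, using the exponential decay of $f_{h,y'}$ to turn each $\mathcal{O}(ht)$ correction into a power of $h$, then Cauchy--Schwarz and Young — is indeed what the paper does, and most of the remainder terms come out the way you describe. However, there is a genuine gap: your plan contains no integration by parts, and one of the cross terms cannot be handled without it.

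Specifically, after replacing $g^{k\ell}$ and then $\tilde A$ there remains a contribution of the form
\begin{equation*}
R^1_{k\ell}(h)=\int_{\mathcal{V}_h}g^{k\ell}(y',0)\bigl(\tilde A_k^0-\tilde A_k(y',ht)\bigr)\,\Pi\psi\,\overline{\bigl(-ih\partial_\ell-\tilde A_\ell^0\bigr)\Pi^\perp\psi}\,|g(y',ht)|^{1/2}\dd y'\dd t\,,
\end{equation*}
with the $\mathcal{O}(ht)$-small factor on the $\Pi\psi$ side and the full $P_\ell$ acting on $\Pi^\perp\psi$. The decay of $\Pi\psi$ absorbs the $t$ weight, so a direct Cauchy--Schwarz gives $Ch\|\Pi\psi\|\,\|P_\ell\Pi^\perp\psi\|$, and applying Young as you propose yields $C_\varepsilon h^2\|\Pi\psi\|^2+\varepsilon\|P_\ell\Pi^\perp\psi\|^2$. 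But $h^2\|\Pi\psi\|^2$ is not among the terms listed in the lemma, and it is too large for the sequel: Proposition \ref{prop.mix}, and through it Proposition \ref{prop.eff}, requires the $h^2$ coefficients to sit in front of $\sum_\ell\|P_\ell\Pi\psi\|^2$ (which is only $\mathcal{O}(h)$ on the relevant eigenfunctions) or in front of $h^2\|\Pi\psi\|^2$ (giving $\mathcal O(h^4)$), not in front of a bare $\|\Pi\psi\|^2$. The paper escapes this by integrating by parts in $y_\ell$, moving $P_\ell$ off $\Pi^\perp\psi$ and onto the $\Pi\psi$-side product; this turns $R^1_{k\ell}$ into $Ch\|P_\ell\Pi\psi\|\,\|\Pi^\perp\psi\|+Ch^2\|\Pi\psi\|\,\|\Pi^\perp\psi\|$, which Young then converts into exactly the stated $C_\varepsilon h^2\|P_\ell\Pi\psi\|^2+\varepsilon\|\Pi^\perp\psi\|^2+Ch^2\|\Pi\psi\|\,\|\Pi^\perp\psi\|$. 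Your Leibniz decomposition of $P_k\Pi\psi=P_k(f_{h,y'}\varphi)$ using \eqref{eq.deriv-f} addresses a different issue and does not substitute for this step; in fact, \eqref{eq.deriv-f} is not needed in this lemma at all — it is the tool for the \emph{next} lemma, where $\widetilde{\mathscr{J}^0_{k\ell}}(h)$ is finally bounded, whereas here $\widetilde{\mathscr{J}^0_{k\ell}}(h)$ is simply left intact on the right-hand side.
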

\begin{proof}
	We can proceed by following the same lines as before. 
	 Recall the projections $\pi_{y'}$, $\Pi$ and $\Pi^\bot$ introduced in \eqref{eq:pi-y'} and \eqref{defPi}, and  that $\pi_{y'}$ is an orthogonal projection with respect to the $L^2(|g|^{1/2}(y',{h}  t)\dd t)$ scalar product. 
		First, we write
	\[
	\mathscr{J}_{k\ell}({h} )=
	\widetilde{\mathscr{J}_{k\ell}}({h} )+R_{k\ell}({h} )\,,
	\]
	where
	\[\widetilde{\mathscr{J}_{k\ell}}({h} )=\int_{\mathcal{V}_{h} }g^{k\ell}(y',0)(-i{h} \partial_k-\tilde A_k(y',{h}  t))\Pi\psi\overline{(-i{h} \partial_\ell-\tilde A_\ell(y',{h}  t))\Pi^\perp\psi}|g(y',{h}  t)|^{\frac 12}\dd y'\dd t\,.\]
	Replacing $\tilde A_k$ by $\tilde A_k^0$, we get
	\[\begin{split}
	&|R_{k\ell}({h} )|\\ 
	&\leq C{h} ^2\|\Pi\psi\|\|P_\ell\Pi^\perp\psi\|+C{h} ^2\|P_k\Pi\psi\|\|\Pi^\perp\psi\|+C{h} ^3\|\Pi\psi\|\|\Pi^\perp\psi\|+C{h} \|P_k\Pi\psi\|\|P_\ell\Pi^\perp\psi\|\\
	&\leq C{h} ^2\|\Pi\psi\|\|P_\ell\Pi^\perp\psi\|+C{h} ^2\|P_k\Pi\psi\|\|\Pi^\perp\psi\|+C{h} ^3\|\Pi\psi\|\|\Pi^\perp\psi\|+C_\varepsilon{h} ^2\|P_k\Pi\psi\|^2+\varepsilon\|P_\ell\Pi^\perp\psi\|^2\,.
	\end{split}\]
	Playing the same game, we write
	\[\widetilde{\mathscr{J}_{k\ell}}({h} )=\widetilde{\mathscr{J}_{k\ell}}^0({h} )+\tilde R_{k\ell}({h} )\,,\]
with
	\[\begin{split}
	\tilde R_{k\ell}({h} )&\\
	=&\int_{\mathcal{V}_{h} }g^{k\ell}(y',0)(\tilde A_k^0-\tilde A_k(y',{h}  t))\Pi\psi\overline{(-i{h} \partial_\ell-\tilde A_\ell(y',{h}  t))\Pi^\perp\psi}|g(y',{h}  t)|^{\frac 12}\dd y'\dd t\\
	&+\int_{\mathcal{V}_{h} }g^{k\ell}(y',0)(-i{h} \partial_k-\tilde A_k^0)\Pi\psi\overline{(\tilde A_\ell^0-\tilde A_\ell(y',{h}  t))\Pi^\perp\psi}|g(y',{h}  t)|^{\frac 12}\dd y'\dd t\\
	=&\int_{\mathcal{V}_{h} }g^{k\ell}(y',0)(\tilde A_k^0-\tilde A_k(y',{h}  t))\Pi\psi\overline{(\tilde A_\ell^0-\tilde A_\ell)\Pi^\perp\psi}|g(y',{h}  t)|^{\frac 12}\dd y'\dd t\\
	&+R^1_{k\ell}({h} )+R^2_{k\ell}({h} )\,,
	\end{split}
	\]
	\[R_{k\ell}^1({h} )=\int_{\mathcal{V}_{h} }g^{k\ell}(y',0)(\tilde A_k^0-\tilde A_k(y',{h}  t))\Pi\psi\overline{(-i{h} \partial_\ell-\tilde A_\ell^0)\Pi^\perp\psi}|g(y',{h}  t)|^{\frac 12}\dd y'\dd t\,,\]
	and
	\[R_{k\ell}^2({h} )=\int_{\mathcal{V}_{h} }g^{k\ell}(y',0)(-i{h} \partial_k-\tilde A_k^0)\Pi\psi\overline{(\tilde A_\ell^0-\tilde A_\ell(y',{h}  t))\Pi^\perp\psi}|g(y',{h}  t)|^{\frac 12}\dd y'\dd t'\,.\]
	Let us estimate the remainder $\tilde R_{k\ell}({h} )$. Its first term can be estimated via the Cauchy-Schwarz inequality:
	\[\begin{split}
	\tilde R_{k\ell}({h} )&\leq C{h} ^2\|\Pi\psi\|\|\Pi^\perp\psi\|+\left|R_{k\ell}^1({h} )\right|+\left|R_{k\ell}^2({h} )\right|\\
	\end{split}\,.\]
	We have
	\[|R_{k\ell}^2({h} )|\leq C{h}  \|P_k(\Pi\psi)\|\|\Pi^\perp\psi\|\leq C_\varepsilon{h} ^2\|P_k(\Pi\psi)\|^2+ \varepsilon\|\Pi^\perp\psi\|^2\,.\]
	To estimate $R_{k\ell}^1({h} )$, we integrate by parts with respect to $y_\ell$:
	\[R_{k\ell}^1({h} )=\int_{\mathcal{V}_{h} }P_\ell(g^{k\ell}(y',0)|g(y',{h}  t)|^{\frac 12}(\tilde A_k^0-\tilde A_k(y',{h}  t)\Pi\psi)\overline{\Pi^\perp\psi}\dd y'\dd t\,.\]
	Then, 
	\[\begin{split}
	|R_{k\ell}^1({h} )|&\leq C{h}  \|P_k\Pi\psi\|\|\Pi^\perp\psi\|+C{h} ^2\|\Pi\psi\|\|\Pi^\perp\psi\|\\
	&\leq C_\varepsilon{h} ^2\|P_\ell\Pi\psi\|^2+\varepsilon\|\Pi^\perp\psi\|^2+C{h} ^2\|\Pi\psi\|\|\Pi^\perp\psi\|\,.
	\end{split}\]
\end{proof}
By computing the commutator between $\Pi$ and the tangential derivatives, and using \eqref{eq.deriv-f}, we get the following.
\begin{lemma}
	We have
	\[|\widetilde{\mathscr{J}^0_{k\ell}}({h} )|\leq C{h} ^2\left(\|\Pi\psi\|\|P_\ell\Pi^\perp\psi\|+\|P_k\Pi\psi\|\|\Pi^\perp\psi\|\right)\,.\]	
\end{lemma}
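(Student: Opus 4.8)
The goal is to estimate the cross term $\widetilde{\mathscr J^0_{k\ell}}(h)$, where the $\tilde A_\ell^0$'s are frozen at $y_3=0$ and $g^{k\ell}$ is frozen at $y_3=0$ as well; the only $t$-dependence left sits in the weight $|g(y',ht)|^{1/2}$, so the integrand is, up to this weight, the product of $(-ih\partial_k-\tilde A_k^0)\Pi\psi$ and $\overline{(-ih\partial_\ell-\tilde A_\ell^0)\Pi^\perp\psi}$, with $\Pi\psi=f_{h,y'}(t)\varphi(y')$.

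\medskip

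\textbf{Plan of proof.} The key point is that $\Pi\psi$ is a pure tensor in the transverse variable, $\Pi\psi(y',t)=f_{h,y'}(t)\varphi(y')$, while $\Pi^\perp\psi(y',\cdot)$ lies in the range of $\pi_{y'}^\perp$ for every $y'$, i.e. it is $|g(y',ht)|^{1/2}\dd t$-orthogonal to $f_{h,y'}$. First I would expand $(-ih\partial_k-\tilde A_k^0)\Pi\psi = f_{h,y'}\,(-ih\partial_k-\tilde A_k^0)\varphi - ih(\partial_k f_{h,y'})\varphi$; substituting this into $\widetilde{\mathscr J^0_{k\ell}}(h)$ splits it into two pieces. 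In the first piece, the transverse integral is $\int_0^{\delta/h} f_{h,y'}(t)\,\overline{(-ih\partial_\ell-\tilde A_\ell^0)\Pi^\perp\psi}\,|g(y',ht)|^{1/2}\dd t$; I would move the operator $(-ih\partial_\ell-\tilde A_\ell^0)$ (which acts in $y'$ only) outside the transverse integral at the cost of a commutator with $f_{h,y'}$, namely a term involving $-ih(\partial_\ell f_{h,y'})$. The leading part, $(-ih\partial_\ell-\tilde A_\ell^0)\bigl[\int f_{h,y'}\overline{\Pi^\perp\psi}\,|g|^{1/2}\dd t\bigr]$, vanishes identically because the bracket is $\langle \Pi^\perp\psi,f_{h,y'}\rangle_{L^2(|g(y',ht)|^{1/2}\dd t)}=0$ by definition of $\pi_{y'}^\perp$. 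Hence only the commutator survives, and by \eqref{eq.deriv-f}, $\partial_\ell f_{h,y'}=\mathcal O(h)$ in $L^2$, so this contributes $\mathcal O(h^2)\,\|(-ih\partial_k-\tilde A_k^0)\varphi\|\,\|\Pi^\perp\psi\|=\mathcal O(h^2)\|P_k\Pi\psi\|\,\|\Pi^\perp\psi\|$ after Cauchy--Schwarz.

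\medskip

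For the second piece, $-ih(\partial_k f_{h,y'})\varphi$ paired with $\overline{(-ih\partial_\ell-\tilde A_\ell^0)\Pi^\perp\psi}$, I would again integrate by parts in $y_\ell$ (or equivalently move $(-ih\partial_\ell-\tilde A_\ell^0)$ onto the other factor), producing a term where the transverse integral is $\int_0^{\delta/h}(\partial_k f_{h,y'})\,\overline{\Pi^\perp\psi}\,|g(y',ht)|^{1/2}\dd t$. This is not quite zero, but $\partial_k f_{h,y'}$ is $\mathcal O(h)$ in $L^2(|g(y',ht)|^{1/2}\dd t)$; combined with the prefactor $h$ and a Cauchy--Schwarz against $\|\Pi^\perp\psi\|$ (plus, where the $y_\ell$-derivative lands on $\Pi^\perp\psi$, a factor $\|P_\ell\Pi^\perp\psi\|$), this yields $\mathcal O(h^2)\bigl(\|\varphi\|\,\|\Pi^\perp\psi\|+\|\varphi\|\,\|P_\ell\Pi^\perp\psi\|\bigr)$. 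Since $\|\varphi\|\le \|\Pi\psi\|$ and $\|P_k\Pi\psi\|$ dominates the relevant quantities, collecting the two pieces gives exactly
\[
|\widetilde{\mathscr J^0_{k\ell}}(h)|\le C h^2\bigl(\|\Pi\psi\|\,\|P_\ell\Pi^\perp\psi\|+\|P_k\Pi\psi\|\,\|\Pi^\perp\psi\|\bigr)\,.
\]

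\medskip

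\textbf{Main obstacle.} The delicate point is the bookkeeping of the $y'$-derivatives hitting the weight $|g(y',ht)|^{1/2}$ and the eigenfunction $f_{h,y'}$ simultaneously: one must be careful that differentiating the weight produces, after the change of variables $t=h^{-1}y_3$, bounded (not $h^{-1}$) factors, and that the orthogonality $\langle \Pi^\perp\psi,f_{h,y'}\rangle=0$ is used \emph{before} any $y'$-differentiation is thrown onto it, since $\partial_{y'}\langle\Pi^\perp\psi,f_{h,y'}\rangle\ne 0$ in general. Organizing the commutator expansion so that the zeroth-order (non-commutator) term always meets the exact orthogonality relation, and only genuine $\partial_{y'}f_{h,y'}=\mathcal O(h)$ or $\partial_{y'}\tilde A$ factors remain, is the crux; once that structure is in place the estimate follows by routine Cauchy--Schwarz.
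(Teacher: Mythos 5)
Your plan is the right one and matches the paper's one-line remark: commute $\Pi$ (equivalently $f_{h,y'}$ and the weight) with the tangential momenta, use the exact orthogonality $\int_0^{\delta/h} f_{h,y'}\,\overline{\Pi^\perp\psi}\,|g(y',ht)|^{1/2}\,\dd t=0$ for each $y'$, and invoke \eqref{eq.deriv-f} for the $\partial_{y'}f_{h,y'}$ commutators. There is however one point you do not quite close, and it is precisely the one you flag as the ``main obstacle.''

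When you pull $ih\partial_\ell$ out of the transverse integral, the $y'$-derivative lands on three objects: on the bracket (killed by orthogonality), on $f_{h,y'}$ (which is $\mathcal O(h)$ by \eqref{eq.deriv-f}, giving the expected $\mathcal O(h^2)\|\Pi^\perp\psi\|$), and on the weight $|g(y',ht)|^{1/2}$. You describe this last contribution as ``bounded (not $h^{-1}$) factors.'' Bounded would only give $ih\cdot \mathcal O(1)\|\Pi^\perp\psi\|=\mathcal O(h)\|\Pi^\perp\psi\|$, which is one order short of the claimed $\mathcal O(h^2)$. The missing step is to write
\[
\partial_\ell\bigl(|g(y',ht)|^{1/2}\bigr)=\Bigl(\partial_\ell\log|g(y',0)|^{1/2}\Bigr)\,|g(y',ht)|^{1/2}+\mathcal O(ht)\,,
\]
so that the first part is again killed by the orthogonality relation, and only the $\mathcal O(ht)$ tail remains; the exponential decay of $f_{h,y'}$ (so that $\|t\,f_{h,y'}\|_{L^2(|g|^{1/2}\dd t)}=\mathcal O(1)$) then gives $\int f_{h,y'}\overline{\Pi^\perp\psi}\,\partial_\ell|g|^{1/2}\dd t=\mathcal O(h)\|\Pi^\perp\psi\|_{L^2,t}$, hence $\mathcal O(h^2)$ after the $ih$ prefactor. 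Without this refinement the estimate would only be $\mathcal O(h)$.

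A second, smaller, remark: your closing sentence asserts that the orthogonality must be used ``before any $y'$-differentiation is thrown onto it, since $\partial_{y'}\langle\Pi^\perp\psi,f_{h,y'}\rangle\neq 0$ in general.'' This is the opposite of what you want: the weighted pairing $\langle\Pi^\perp\psi,f_{h,y'}\rangle_{L^2(|g(y',ht)|^{1/2}\dd t)}$ vanishes for \emph{every} $y'$, so its $y'$-derivative vanishes too, and differentiating this identity is exactly what lets you trade $\langle\partial_\ell\Pi^\perp\psi,f_{h,y'}\rangle$ for the two controlled terms $\langle\Pi^\perp\psi,\partial_\ell f_{h,y'}\rangle$ and the weight-derivative term discussed above. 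For the second piece (the $-ih(\partial_k f_{h,y'})\varphi$ contribution), direct Cauchy--Schwarz already gives $\mathcal O(h^2)\|\varphi\|\,\|P_\ell\Pi^\perp\psi\|$; the integration by parts you propose is unnecessary there. With these two adjustments your argument matches the paper's.
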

From the last two lemmas, we deduce the following.
\begin{proposition}\label{prop.mix}
 For any $\varepsilon >0$, there exist $h_\varepsilon,C_\varepsilon>0$ such that, for all $h\in (0,h_\varepsilon]$, we have 
	\[|\Re Q^{\mathrm{bnd}}(\Pi\psi,\Pi^\perp\psi)|\leq\varepsilon\left(\|\Pi^\perp\psi\|^2+\sum_{\ell}\|P_\ell\Pi^\perp\psi\|^2\right)+C_\varepsilon {h} ^2\Big(\sum_{\ell}\|P_\ell\Pi\psi\|^2+ {h} ^2\|\Pi\psi\|^2\Big)\,.\]
\end{proposition}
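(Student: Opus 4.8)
The plan is to unwind $Q^{\mathrm{bnd}}(\Pi\psi,\Pi^\perp\psi)$ into the finite sum $\sum_{k,\ell\in\{1,2\}}\mathscr{J}_{k\ell}(h)$ furnished by the very definition of $Q^{\mathrm{bnd}}$, and then to estimate each $\mathscr{J}_{k\ell}(h)$ by stacking the three lemmas proved above. The first lemma replaces $|\mathscr{J}_{k\ell}(h)|$ by $|\widetilde{\mathscr{J}^0_{k\ell}}(h)|$ plus contributions that are already of one of two acceptable shapes — either $\varepsilon\,(\|\Pi^\perp\psi\|^2+\|P_\ell\Pi^\perp\psi\|^2)$, or $C_\varepsilon h^2\,(\|P_k\Pi\psi\|^2+\|P_\ell\Pi\psi\|^2)$ — together with mixed products of the form $h^2\|\Pi\psi\|\,\|P_\ell\Pi^\perp\psi\|$, $h^2\|P_k\Pi\psi\|\,\|\Pi^\perp\psi\|$ and $h^2\|\Pi\psi\|\,\|\Pi^\perp\psi\|$. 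The third lemma then bounds $|\widetilde{\mathscr{J}^0_{k\ell}}(h)|$ by $Ch^2(\|\Pi\psi\|\,\|P_\ell\Pi^\perp\psi\|+\|P_k\Pi\psi\|\,\|\Pi^\perp\psi\|)$, i.e. by mixed products of exactly the same type, so no new kind of term appears.

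Next I would kill every mixed product with Young's inequality, keeping the dichotomy clean: $h^2\|\Pi\psi\|\,\|P_\ell\Pi^\perp\psi\|\le\varepsilon\|P_\ell\Pi^\perp\psi\|^2+C_\varepsilon h^4\|\Pi\psi\|^2$; $h^2\|P_k\Pi\psi\|\,\|\Pi^\perp\psi\|\le\varepsilon\|\Pi^\perp\psi\|^2+C_\varepsilon h^4\|P_k\Pi\psi\|^2$; and $h^2\|\Pi\psi\|\,\|\Pi^\perp\psi\|\le\varepsilon\|\Pi^\perp\psi\|^2+C_\varepsilon h^4\|\Pi\psi\|^2$. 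After this step each contribution to $|\mathscr{J}_{k\ell}(h)|$ is either $\varepsilon$ times $\|\Pi^\perp\psi\|^2$ or $\|P_\ell\Pi^\perp\psi\|^2$, or $C_\varepsilon$ times $h^2\|P_k\Pi\psi\|^2$ (absorbing $h^4\|P_k\Pi\psi\|^2\le h^2\|P_k\Pi\psi\|^2$ for $h$ small) or $h^4\|\Pi\psi\|^2$. Summing over the four pairs $(k,\ell)$, taking real parts (which does not increase absolute values, so $|\Re\sum_{k\ell}\mathscr{J}_{k\ell}(h)|\le\sum_{k\ell}|\mathscr{J}_{k\ell}(h)|$), and renaming $\varepsilon$ into $\varepsilon/C$ to absorb the fixed multiplicity, we land exactly on the inequality of Proposition~\ref{prop.mix}, with $h_\varepsilon$ chosen small enough to cover $h^4\le h^2$ and the implicit smallness requirements of the three lemmas.

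Conceptually the whole thing is bookkeeping once the lemmas are in place, so there is no genuine obstacle; the one point that must be watched is the \emph{asymmetry} of the absorption. The $\Pi^\perp$-gradient terms $\|P_\ell\Pi^\perp\psi\|^2$ have to retain the small prefactor $\varepsilon$ (so that they can be reabsorbed later by the coercivity of $Q^{\mathrm{tr}}$ on $\mathrm{Ran}\,\Pi^\perp$ in the forthcoming lower bound), whereas every tangential derivative of $\Pi\psi$ is allowed only a $C_\varepsilon h^2$ weight and the undifferentiated $\|\Pi\psi\|^2$ only a $C_\varepsilon h^4$ weight; one must therefore always split the mixed products so that the $\Pi^\perp$ factor picks up $\varepsilon$ and the $\Pi$ factor picks up $C_\varepsilon$ times the appropriate power of $h$, and check at the end that the powers of $h$ accumulated on the $\|P_k\Pi\psi\|^2$ and $\|\Pi\psi\|^2$ terms are no worse than $h^2$ and $h^4$ respectively.
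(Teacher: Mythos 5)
Your proposal is correct and coincides with what the paper intends: the paper simply states ``From the last two lemmas, we deduce the following,'' and you have supplied the elided bookkeeping, namely summing $\mathscr{J}_{k\ell}(h)$ over $k,\ell\in\{1,2\}$, feeding in the two preceding lemmas, and absorbing the residual mixed products $h^2\|\Pi\psi\|\,\|P_\ell\Pi^\perp\psi\|$, $h^2\|P_k\Pi\psi\|\,\|\Pi^\perp\psi\|$, $h^2\|\Pi\psi\|\,\|\Pi^\perp\psi\|$ via Young's inequality so that the $\Pi^\perp$ factor always carries $\varepsilon$ and the $\Pi$ factor carries $C_\varepsilon h^2$ (or $C_\varepsilon h^4$ for the undifferentiated term). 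Your emphasis on the asymmetric splitting is exactly the right thing to watch; the only slip is cosmetic (there are two lemmas in the lower-bound subsection, not three), which does not affect the argument.
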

In the sequel, $\varepsilon$ will be selected small but {\bf fixed}, so we will drop the reference to $\varepsilon$ in the constants $C_\varepsilon $ and $h_\varepsilon$. These constants may vary from one line to another without mentioning this explicitly.

\subsection{Proof of Proposition \ref{prop.eff}}\label{sec.proofeff}
From \eqref{eq.decompose} and Proposition \ref{prop.mix}, we get, by choosing $\varepsilon$ small enough,
\[\begin{split}Q(\psi)\geq\int_D\mu(y',{h} )|\varphi(y')|^2\dd y'+(1-C{h} ^2)Q^{\mathrm{tg}}_0(\varphi)+R_{h} (\varphi)+E_h(\varphi)-C{h} ^4\|\varphi\|^2-\varepsilon \|\Pi^\perp\psi\|^2\,.\end{split}\]
Since the first eigenvalues are close to $-1$, the min-max theorem implies that
\begin{equation}\label{eq.lb0}
\mu_n({h} )\geq \tilde\mu^{\mathrm{eff}}_n({h} )-C{h} ^4\,,
\end{equation}
where $\tilde\lambda^{\mathrm{eff}}_n({h} )$ is the $n$-th eigenvalue of
\[\widetilde{\mathscr{L}}^{\mathrm{eff}}=\sum_{k\ell} ((1-C{h} ^2)P_\ell \alpha_{k\ell}P_k+\beta_{k\ell} P_k+P_k\beta_{k\ell}+\gamma_{k\ell})+\tilde \mu(y',h) \,,\]
with
\[\tilde\mu(y',h)=\mu(y',{h})-h^2\rho(y',h)\,.\]
As we can see $\widetilde{\mathscr{L}}^{\mathrm{eff}}$ is a slight perturbation of ${\mathscr{L}}^{\mathrm{eff}}$. It is rather easy to check that
\[\tilde\mu_n^{\mathrm{eff}}(h)=-1+\mathcal{O}(h)\,,\]
so that, for all normalized eigenfunction $\psi$ associated with $\tilde\mu_n^{\mathrm{eff}}(h)$, we have
\[\sum_{\ell}\|P_\ell\psi\|^2=\mathcal{O}(h)\,,\]
where we used Remark \ref{rem.abcr}. This a priori estimate, with the min-max principle, implies that
\begin{equation}\label{eq.lb1}
\tilde \mu_n^{\mathrm{eff}}(h)\geq \mu^{\mathrm{eff}}_n(h)-Ch^3\,.
\end{equation}
Proposition is a consequence of \eqref{eq.lb0}, \eqref{eq.lb1}, and \eqref{eq.ub}.

\section{Spectral analysis of the effective operator}\label{sec:efop*}
Thanks to Proposition \ref{prop.eff}, we may focus our attention on the effective operator (see \eqref{eq:eff-op-2D}) on the $L^2(D)$,
\[\mathscr{L}^{\mathrm{eff}}=\sum_{k\ell} P_\ell \alpha_{k\ell}P_k+\sum_k \hat\beta_{k} P_k+P_k\hat\beta_{k}+\gamma+\tilde\mu(y',{h} )\,,\]
where $\hat\beta_k$ and $\gamma$ were introduced in \eqref{eq:3.21}.
\subsection{A global effective operator}
In view of Remark \ref{rem.abcr}, it is natural to consider the new operator
\[\mathscr{L}^{\mathrm{eff}, 0}=\sum_{k\ell} (P_\ell \alpha^{[0]}_{k\ell}P_k+hP_\ell\alpha_{k\ell}^{[1]}P_k+h(\beta^{[1]}_{k\ell} P_k+P_k\beta^{[1]}_{k\ell})+h^2\gamma^{[2]}_{k\ell})-2\kappa(y')h+h^2\mu^{[2]}(y')\,.\]
We can prove that the rough estimates \[\mu_n^{\mathrm{eff}}(h)+1=\mathcal{O}(h)\,,\quad \mu_n^{\mathrm{eff},0}(h)=\mathcal{O}(h)\,.\]
By using the same considerations as in Section \ref{sec.proofeff}, we may check that the action of $P_\ell$ on the low lying eigenfunctions is of order $\mathcal{O}(h^{\frac 12})$, and we get the following.
\begin{proposition}\label{prop.eff-eff0}
For all $n\geq 1$, there exist $h_0>0$, $C>0$ such that, for all $h\in(0,h_0)$,
\[|\mu_n^{\mathrm{eff}}(h)-(1+\mu_n^{\mathrm{eff},0}(h))|\leq Ch^{\frac 52}\,.\]	
\end{proposition}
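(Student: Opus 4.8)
The plan is to compare $\mathscr{L}^{\mathrm{eff}}$ with $\mathscr{L}^{\mathrm{eff},0}$ by controlling the difference of the two quadratic forms on the low-lying eigenfunctions, using the \emph{a priori} estimate $\|P_\ell\psi\|^2=\mathcal{O}(h)$ (equivalently $\mathcal{O}(h^{1/2})$ for each $\|P_\ell\psi\|$). First I would record that, by Remark~\ref{rem.abcr}, the coefficients decompose as $\alpha_{k\ell}=\alpha^{[0]}_{k\ell}+h\alpha^{[1]}_{k\ell}+\mathcal{O}(h^2)$, $\beta_{k\ell}=h\beta^{[1]}_{k\ell}+\mathcal{O}(h^2)$, $\gamma_{k\ell}=h^2\gamma^{[2]}_{k\ell}+\mathcal{O}(h^3)$, and $\rho=\mathcal{O}(h)$, while by \eqref{eq.muy'} we have $\mu(y',h)=-1-2h\kappa(y')+h^2\mu^{[2]}(y')+\mathcal{O}(h^3)$, all uniformly in $y'\in D$. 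Consequently $\tilde\mu(y',h)=\mu(y',h)-h^2\rho(y',h)=-1-2h\kappa(y')+h^2\mu^{[2]}(y')+\mathcal{O}(h^3)$, and subtracting $-1$ throughout, the form of $\mathscr{L}^{\mathrm{eff}}+1$ differs from that of $\mathscr{L}^{\mathrm{eff},0}$ only by: an $\mathcal{O}(h^2)$ perturbation of the principal symbol contracted with $P_\ell\alpha_{k\ell}P_k$; an $\mathcal{O}(h^2)$ perturbation of each first-order coefficient contracted with $P_k$; an $\mathcal{O}(h^3)$ scalar perturbation; and an $\mathcal{O}(h^3)$ perturbation coming from $\tilde\mu$.

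Next I would establish the \emph{a priori} bound. The rough localization $\mu_n^{\mathrm{eff}}(h)+1=\mathcal{O}(h)$ follows exactly as in the discussion preceding \eqref{eq.lb1}: evaluating $\langle\mathscr{L}^{\mathrm{eff}}\psi,\psi\rangle$ on a suitable test function built from a ground state of the harmonic-oscillator part (using $\alpha^{[0]}_{k\ell}=\delta_{k\ell}$ at $y'=0$ by our normalization \eqref{eq:y0,W*}, and the quadratic behavior of $-2\kappa(y')$ near its minimum) gives the upper bound, while the lower bound comes from $\alpha\geq(1-Ch)\mathrm{Id}$ in the form sense together with $\tilde\mu\geq-1-Ch$. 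From $\langle\mathscr{L}^{\mathrm{eff}}\psi_n,\psi_n\rangle=\mu_n^{\mathrm{eff}}(h)=-1+\mathcal{O}(h)$ and the fact that all the non-principal terms of $\mathscr{L}^{\mathrm{eff}}+1$ are controlled by $h\sum_\ell\|P_\ell\psi_n\|^2+\mathcal{O}(h^2)$ (Cauchy--Schwarz on the $\beta P$ terms, using $\beta=\mathcal{O}(h)$), one deduces $(1-Ch)\sum_\ell\|P_\ell\psi_n\|^2\leq Ch$, hence $\sum_\ell\|P_\ell\psi_n\|^2=\mathcal{O}(h)$ for any $L^2$-normalized eigenfunction associated with $\mu_n^{\mathrm{eff}}(h)$; the same argument applied to $\mathscr{L}^{\mathrm{eff},0}$ gives $\mu_n^{\mathrm{eff},0}(h)=\mathcal{O}(h)$ and $\sum_\ell\|P_\ell\psi\|^2=\mathcal{O}(h)$ for its low-lying eigenfunctions.

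Then I would run the standard two-sided min-max comparison. For the upper bound on $\mu_n^{\mathrm{eff}}(h)$: take an $n$-dimensional space spanned by eigenfunctions of $\mathscr{L}^{\mathrm{eff},0}$ for its first $n$ eigenvalues; on any normalized $\psi$ in that space, $\langle(\mathscr{L}^{\mathrm{eff}}+1)\psi,\psi\rangle\leq\langle\mathscr{L}^{\mathrm{eff},0}\psi,\psi\rangle+Ch^2\sum_\ell\|P_\ell\psi\|^2+Ch^3\|\psi\|^2\leq\mu_n^{\mathrm{eff},0}(h)+Ch^3$, using the \emph{a priori} bound; min-max gives $\mu_n^{\mathrm{eff}}(h)+1\leq\mu_n^{\mathrm{eff},0}(h)+Ch^3$. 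The reverse inequality is symmetric, spanning with eigenfunctions of $\mathscr{L}^{\mathrm{eff}}$ and using their \emph{a priori} bound. Combining the two gives $|\mu_n^{\mathrm{eff}}(h)-(1+\mu_n^{\mathrm{eff},0}(h))|\leq Ch^{3}$, which is stronger than the claimed $Ch^{5/2}$; the $h^{5/2}$ in the statement is simply a safe margin (and would also accommodate any $\mathcal{O}(h^{1/2})$ slack if one only controls $\|P_\ell\psi\|=\mathcal{O}(h^{1/2})$ in the cross terms rather than $\sum\|P_\ell\psi\|^2=\mathcal{O}(h)$). The main obstacle is purely bookkeeping: making sure every discarded term is genuinely $\mathcal{O}(h^2)$ \emph{times} a quantity controlled by the \emph{a priori} estimate, rather than $\mathcal{O}(h^2)$ times something merely bounded in $H^1$; the exponential decay of $f_{h,y'}$ and the uniformity in $y'\in D$ from Remark~\ref{rem.abcr} and \eqref{eq.muy'}--\eqref{eq.deriv-f} are what make this work, exactly as in Section~\ref{sec.proofeff}.
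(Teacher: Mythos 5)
Your approach is the same as the paper's (rough a priori estimates $\mu_n^{\mathrm{eff}}(h)+1=\mathcal{O}(h)$, $\mu_n^{\mathrm{eff},0}(h)=\mathcal{O}(h)$, then $\|P_\ell\psi\|=\mathcal{O}(h^{1/2})$ on low-lying eigenfunctions, then a two-sided min-max comparison). But there is an algebraic slip in your final chain of inequalities that inverts your interpretation of the exponent. You write
\[
\langle(\mathscr{L}^{\mathrm{eff}}+1)\psi,\psi\rangle\leq\langle\mathscr{L}^{\mathrm{eff},0}\psi,\psi\rangle+Ch^2\sum_\ell\|P_\ell\psi\|^2+Ch^3\|\psi\|^2
\]
and conclude $\mathcal{O}(h^3)$, but the first-order difference terms you yourself identified---namely $(\beta_{k\ell}-h\beta^{[1]}_{k\ell})P_k+P_k(\beta_{k\ell}-h\beta^{[1]}_{k\ell})$ with $\beta_{k\ell}-h\beta^{[1]}_{k\ell}=\mathcal{O}(h^2)$---do not fit inside $Ch^2\sum_\ell\|P_\ell\psi\|^2$. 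By Cauchy--Schwarz they contribute
\[
\mathcal{O}(h^2)\,\|P_k\psi\|\,\|\psi\|=\mathcal{O}(h^2)\cdot\mathcal{O}(h^{1/2})\cdot 1=\mathcal{O}(h^{5/2})\,,
\]
with only \emph{one} factor of $\|P_k\psi\|$, and no Young-type splitting improves this (optimizing $\varepsilon\|P_k\psi\|^2+\varepsilon^{-1}h^4\|\psi\|^2$ over $\varepsilon$ again yields $h^{5/2}$). So the $h^{5/2}$ in the proposition is not a ``safe margin'': it is precisely the contribution of the first-order $\beta$-type cross terms, and it occurs \emph{even with} the full a priori estimate $\sum_\ell\|P_\ell\psi\|^2=\mathcal{O}(h)$---your closing remark attributing the potential slack to a weaker a priori bound misdiagnoses the source. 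The second-order perturbations ($\mathcal{O}(h^2)\cdot\|P_\ell\psi\|\|P_k\psi\|=\mathcal{O}(h^3)$), the $\gamma$-perturbation, the $\mu$-tail and the $-h^2\rho$ term are all genuinely $\mathcal{O}(h^3)$; the bottleneck is the $\beta$-type term, which gives exactly the $h^{5/2}$ stated. With that correction, the rest of your argument goes through and yields the proposition as stated.
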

Therefore, we can focus on the spectral analysis of $\mathscr{L}^{\mathrm{eff}, 0}$. In order to lighten the notation, we drop the superscript $[j]$ in the expression of $\mathscr{L}^{\mathrm{eff}, 0}$ when it is not ambiguous. Thus,
\[\mathscr{L}^{\mathrm{eff}, 0}=\sum_{k\ell} \left(P_\ell \alpha_{k\ell}P_k+hP_\ell\alpha^{[1]}_{k\ell}P_k\right)+h\sum_{k=1}^2(\tilde\beta_{k} P_k+P_k\tilde\beta_{k})-2\kappa(y')h+h^2V(y')\,,\]
where $\hat\beta_k$, $\gamma$ are introduced in \eqref{eq:3.21} and 
\[V(y')=\mu(y')+\gamma(y')\,.\]
We recall that this operator is equipped with the Dirichlet boundary conditions on $\partial D$. In fact, by using a partition of the unity, as in Section \ref{sec.2}, we can prove that
\[\mu_n^{\mathrm{eff}}(h)=h(\min_{y'\in D}\sqrt{\det \alpha}\,|\curl \tilde A^0|-2\kappa(y'))+o(h)\,.\]
Note that 
\[\sqrt{\det \alpha}\,\curl \tilde A^0=\mathbf{B}\cdot \mathbf{n}\,.\]
Thus,
\[\mu_n^{\mathrm{eff}}(h)=h(\min(|\mathbf{B}\cdot\mathbf{n}|-2\kappa)+o(h)\,.\]
Due to our assumption that the minimum of $|\mathbf{B}\cdot\mathbf{n}|-2\kappa$ is unique, we deduce, again as in Section \ref{sec.2}, that the eigenfunctions are localized, in the Agmon sense, near $y'=0$ (the coordinate of $x_0$ on the boundary).

This invites us to define a global operator, acting on $L^2(\mathbb{R}^2)$. Consider a ball $D_0\subset D$ centered at $y'=0$. Outside $D_0$, we can smoothly extend the (informly in $y'$) positive definite matrix $\alpha$ to $\mathbb{R}^2$ so that the extension is still definite positive (uniformly in $y'$) and constant outside $D$.
Then, consider the function
\[b(y')=\sqrt{\det \alpha}\,\tilde b(y')\,,\qquad \tilde b:=\curl \tilde A^0\,.\]
Its extension may be chosen so that the extended function has still a unique and non-degenerate minimum (not attained at infinity) and is constant outside $D$. With these two extensions, we have a natural extension of $\tilde b$ to $\mathbb{R}^2$. We would like to extend $\tilde A^0$, but it is not necessary. We may consider an associated smooth vector potential $\hat A^0$ defined on $\mathbb{R}^2$ and growing at most polynomially (as well as all its derivatives). Up to change of gauge on $D$ and thanks to the rough localization near $y'=0$, the low-lying eigenvalues of $\mathscr{L}^{\mathrm{eff}, 0}$ coincide modulo $\mathcal{O}(h^\infty)$ with the one of $\widehat{\mathscr{L}}^{\mathrm{eff}, 0}$ defined by replacing $\tilde A^0$ by $\hat A^0$.

In the same way, we extend $\kappa$, $V$ and $\beta$.

Modulo $\mathcal{O}(h^\infty)$, we may consider
\[\mathscr{L}^{\mathrm{eff}, 0}=\sum_{k\ell} \left(P_\ell \alpha_{k\ell}P_k+hP_\ell\alpha^{[1]}_{k\ell}P_k\right)+h\sum_{k=1}^2(\tilde\beta_{k} P_k+P_k\tilde\beta_{k})-2\kappa(y')h+h^2V(y')\,,\]
acting on $L^2(\mathbb{R}^2)$, where $\alpha$, $\beta$, $\kappa$, $V$ are the extended functions, and where $P_\ell=-ih\partial_\ell-\tilde A^0$.
\subsection{Semiclassical analysis: proof of Theorem \ref{thm:ev-c}}~\\
Having the effective operator in hand, we determine in Theorem~\ref{thm:splitting-eff} below the asymptotics for the low-lying eigenvalues. In turn this yields Theorem~\ref{thm:ev-c} after collecting \eqref{eq:ev-g-h}, \eqref{eq.ub}, \eqref{eq.lb1} and  Proposition~\ref{prop.eff-eff0}. 

Note that  the situation considered in \cite{HM3d} and \cite{HKo} is different. In our situation, we determine an effective \emph{two dimensional} global operator (see Proposition~\ref{prop.eff-eff0}), and we get the spectral asymptotics from those of the effective operator. Our effective operator inherits a natural magnetic field as well, whose analysis goes in the same spirit as for the pure  magnetic Laplacian (see \cite{HKo1,RVN15}). 
  
We have
\[\mathscr{L}^{\mathrm{eff},0}=\mathrm{Op}^{\mathrm{W}}_{h} \left(H^{\mathrm{eff}} \right)\,,\]
where
\begin{multline*}
H^{\mathrm{eff}} =\sum_{k\ell}\alpha_{k\ell}(p_\ell-\tilde A^0_\ell)(p_k-\tilde A_k^0)+h\sum_{k\ell}\alpha_{k\ell}^{[1]}(p_\ell-\tilde A^0_\ell)(p_k-\tilde A_k^0)\\ +2h\sum_{k=1}^2\tilde\beta_{k}(p_k-\tilde A^0_k)-2\kappa(y')h\
 +h^2\tilde V(y') )\,,
\end{multline*} 
for some new $\tilde V$.

The principal symbol of $\mathscr{L}^{\mathrm{eff},0}$ is thus 
\[H(q,p)=\sum_{k\ell}\alpha_{k\ell}(p_\ell-A_\ell(q))(p_k-A_k(q))=:\|p-A(q)\|^2_\alpha\,,\]
where we dropped the tildas and the superscript $0$ to lighten the notation.

Theorem \ref{thm:ev-c} is a consequence of the following theorem (and of \eqref{eq.locbnd} and Propositions \ref{prop.eff} and \ref{prop.eff-eff0}), recalling \eqref{defQ} and \eqref{eq:ev-g-h} (with $\sigma=1$).
\begin{theorem}\label{thm:splitting-eff}
Let $n\geq 1$.	There exists $c_1\in\R$ such that
\[\mu_n^{\mathrm{eff},0}(h)=h\min_{x\in \partial \Omega}(|\mathbf{B}\cdot\mathbf{n}(x) |-2\kappa(x) )+h^2(c_0(2n-1)+c_1)+\mathcal{O}(h^3)\,,\]
with
\[c_0=\frac{\sqrt{\det(\mathrm{Hess}_{x_0}(|\mathbf{B}\cdot\mathbf{n}|-2\kappa ))}}{2|\mathbf{B}\cdot\mathbf{n}(x_0)|} \,.\]
\end{theorem}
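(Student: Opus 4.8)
The plan is to treat $\mathscr{L}^{\mathrm{eff},0}$ as a two–dimensional semiclassical magnetic Schr\"odinger operator on $L^2(\mathbb{R}^2)$ and to run the magnetic–well machinery of \cite{RVN15,HKo1}, taking care of two features peculiar to our setting: the principal symbol carries the variable ``mass tensor'' $\alpha(y')$ rather than the Euclidean one, and there is an electric term $-2h\kappa(y')$ living precisely at the scale of the first Landau level. The principal symbol $H(q,p)=\|p-\tilde A^0(q)\|_\alpha^2$ vanishes exactly on the symplectic submanifold $\Sigma=\{p=\tilde A^0(q)\}$, and the lowest eigenvalue of its transverse harmonic approximation is $h\,b(q)$ with $b(q)=\sqrt{\det\alpha(q)}\,|\curl\tilde A^0(q)|=|\Bb\cdot\nf(q)|$ (near the coordinate of $x_0$, where $\Bb\cdot\nf$ keeps a constant sign thanks to \eqref{eq.gencond}). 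Incorporating the $\mathcal O(h)$ electric contribution, the relevant symbol on the base becomes $h\,W(q)$ with $W:=|\Bb\cdot\nf|-2\kappa$, which by hypothesis has a unique non–degenerate minimum at $q=0$, equal to $E=\min_{\partial\Omega}\bigl(|\Bb\cdot\nf|-2\kappa\bigr)$. A partition–of–unity argument as in Section~\ref{sec.2} gives $\mu_n^{\mathrm{eff},0}(h)=hE+o(h)$, and Agmon estimates then confine the low–lying eigenfunctions to an $\mathcal O(h^{\frac12-\epsilon})$–neighbourhood of $q=0$, so one may freely modify $\alpha,\tilde A^0,\kappa$ and the lower–order coefficients away from that neighbourhood.

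Next I would normalize, near $q=0$: by a linear symplectic change of the slow variables and a gauge transformation one arranges $\alpha(0)=\mathrm{Id}$, $\tilde A^0(0)=0$, $\curl\tilde A^0(0)=b_0:=|\Bb\cdot\nf(x_0)|>0$, with the linear part of $\tilde A^0$ in Landau gauge. The first–order perturbation $h\sum_k(\tilde\beta_kP_k+P_k\tilde\beta_k)$ is then absorbed, modulo $\mathcal O(h^2)$, into the magnetic term by completing the square,
\[\sum_{k\ell}P_\ell\alpha_{k\ell}P_k+h\sum_k(\tilde\beta_kP_k+P_k\tilde\beta_k)=\sum_{k\ell}(P_\ell-hc_\ell)\alpha_{k\ell}(P_k-hc_k)+\mathcal O(h^2),\]
for a suitable vector field $c$: its irrotational part is a genuine gauge, while its rotational part shifts $\curl\tilde A^0$ by $\mathcal O(h)$, hence $b$ by $\mathcal O(h)$, contributing only $\mathcal O(h^2)$ to the eigenvalues. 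This is also what explains the absence of a $h^{3/2}$ term in the final expansion.

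Then, following \cite{RVN15}, a semiclassical symplectic (Birkhoff/Grushin) reduction conjugates $\mathscr{L}^{\mathrm{eff},0}$, microlocally near $\Sigma$, to an operator commuting modulo $\mathcal O(h^\infty)$ with the fast harmonic oscillator $\mathcal H_1=-h^2\partial_{x_1}^2+x_1^2$, of symbol $2\,b(x_2,\xi_2)\,\mathcal I_1-2h\,\kappa(x_2,\xi_2)+h(\text{first order in }\mathcal I_1)+h^2(\cdots)$ with $\mathcal I_1=\tfrac12(x_1^2+\xi_1^2)$. Restricting to the first Landau band ($\mathcal I_1\rightsquigarrow \tfrac h2$) produces a one–dimensional effective operator $\mathscr M_h=h\,\mathrm{Op}^{\mathrm W}_h(W)+\mathcal O(h^2)$ on $L^2(\mathbb R)$ whose symbol $h\,W$ has a non–degenerate minimum $hE$; here the Darboux coordinates $(x_2,\xi_2)$ on $\Sigma$ satisfy $\mathrm d\xi_2\wedge\mathrm dx_2=b(q)\,\mathrm dq_1\wedge\mathrm dq_2$, so the parametrization of $\Sigma$ has Jacobian determinant $b_0$ at the minimum and $\det\bigl(\mathrm{Hess}_{(x_2,\xi_2)}W\bigr)=\det\bigl(\mathrm{Hess}_{x_0}W\bigr)/b_0^2$. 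The harmonic approximation of $\mathscr M_h$ at its bottom is thus a one–dimensional oscillator of frequency $\tfrac1{b_0}\sqrt{\det\mathrm{Hess}_{x_0}W}$, giving $\mu_n(\mathscr M_h)=hE+h^2(c_0(2n-1)+c_1)+\mathcal O(h^{5/2})$ with $c_0=\tfrac{1}{2|\Bb\cdot\nf(x_0)|}\sqrt{\det(\mathrm{Hess}_{x_0}(|\Bb\cdot\nf|-2\kappa))}$; pushing the Birkhoff normalization of $\mathscr M_h$ one order further replaces the remainder by $\mathcal O(h^3)$. Since the reduction is exact up to $\mathcal O(h^\infty)$, comparing spectra yields Theorem~\ref{thm:splitting-eff}, and then \eqref{eq.locbnd}, Propositions~\ref{prop.eff} and \ref{prop.eff-eff0}, and \eqref{eq:ev-g-h} with $\sigma=1$ give Theorem~\ref{thm:ev-c}. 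The constant $c_1$ collects all remaining $\mathcal O(h^2)$ data: the $h^2\tilde V$ term, the $h\,\alpha^{[1]}$ correction, the $\mathcal O(h^2)$ remnants of the square–completion step, the sub–principal terms from the ordering in $\sum P_\ell\alpha_{k\ell}P_k$, and the Birkhoff corrections.

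The main obstacle is the precise bookkeeping of \emph{all} $\mathcal O(h^2)$ effects through this reduction. Because the electric potential enters at order $h$, i.e. exactly the order of the magnetic first Landau level, its quadratic Taylor coefficients at $x_0$ genuinely mix with the curvature of the magnetic well and must be tracked carefully to recover the stated $c_0$; at the same time the non–Euclidean mass tensor $\alpha$ forces one to work with the non–canonical symplectic form $b(q)\,\mathrm dq_1\wedge\mathrm dq_2$ on $\Sigma$, and one must verify that the identity $\sqrt{\det\alpha}\,\curl\tilde A^0=\Bb\cdot\nf$ is precisely what makes the magnetic and electric contributions coalesce into the single effective well $|\Bb\cdot\nf|-2\kappa$ while leaving the factor $|\Bb\cdot\nf(x_0)|$ in the denominator of $c_0$ untouched.
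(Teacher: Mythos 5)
Your proposal is correct and takes essentially the same route as the paper's proof: both follow the Birkhoff normal form machinery of \cite{RVN15}, identifying the characteristic manifold $\Sigma=\{p=\tilde A^0(q)\}$ on which $\omega_0$ restricts to $B\,\mathrm{d}q_1\wedge\mathrm{d}q_2$, conjugating microlocally to an operator commuting with the fast oscillator whose symbol at the first Landau level reduces to $h\bigl(B\sqrt{\det\alpha}-2\kappa\bigr)\circ\varphi=hW\circ\varphi$, and applying the harmonic approximation at the non-degenerate minimum, with the Jacobian factor of the Darboux change producing the $|\Bb\cdot\nf(x_0)|$ in the denominator of $c_0$. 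The extra bookkeeping you supply (the square-completion absorbing the $\beta$-terms into the gauge and the explicit determinant computation $\det\mathrm{Hess}_{(x_2,\xi_2)}(W\circ\varphi)=\det\mathrm{Hess}_{x_0}W/b_0^2$) is what the paper leaves implicit in its citation of \cite{RVN15}, but it is the same argument.
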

\begin{proof}
The proof closely follows the same lines as in \cite{RVN15}. Let us only recall the strategy without entering into detail.	
	
Let us consider the characteristic manifold
\[\Sigma=\{(q,p)\in \R^4 : H(q,p)=0)\}=\{(q,p)\in\R^4 : p=\Ab(q)\}\,.\]
Considering the canonical symplectic form $\omega_0=\dd p\wedge \dd q$, an easy computation gives
\[(\omega_0)_{|\Sigma}= B \, \dd q_1\wedge q_2\,,\quad B(q)=\partial_1A_2-\partial_2 A_1\,.\]
Our assumptions imply that $B\geq B_0>0$. This suggests to introduce the new coordinates
\[ q=\varphi^{-1}(\tilde q)\,,\quad \mbox{ with }q_1=\tilde q_1\,,\quad q_2=\int_0^{\tilde q_2}B(\tilde q_1,u)\dd u\,.\]
We get
\[
\varphi^*(\omega_0)_{|\Sigma}=\dd \tilde q_1\wedge \dd \tilde q_2
\,.\]
This allows to construct a quasi symplectomorphism which sends $\Sigma$ onto $\{x_1=\xi_1=0\}$. 
Indeed, consider 
\[\Psi : (x_1, x_2, \xi_1,\xi_2)\mapsto j(x_2,\xi_2)+x_1\mathbf{e}(x_2,\xi_2)+\xi_1\mathbf{f}(x_2,\xi_2)\,,\]
with
\[j(x_2,\xi_2)=(\varphi(x_2,\xi_2), A(\varphi(x_2,\xi_2)))\in\Sigma\,,\]
and
\[\mathbf{e}(x_2,\xi_2)=B^{-\frac 12}(\mathbf{e}_1,\dd \Ab^T(\mathbf{e}_1))\,,\quad \mathbf{f}(x_2,\xi_2)=B^{-\frac 12}(\mathbf{e}_2,\dd \Ab^T(\mathbf{e}_2))\,,\]
where $(\dd\Ab)^{\mathrm{T}}$ is the usual transpose of the Jacobian matrix $\dd\Ab$ of $\Ab$.

On $x_1=\xi_1=0$, we have $\Psi^*\omega_0=\omega_0$. The map $\Psi$ can be slightly modified (by composition with a map tangent to the identity) so that it becomes symplectic.

Let us now describe $H$ in the coordinates $(x,\xi)$,\emph{i.e}, the new Hamiltonian $H\circ\Psi$. To do that, it is convenient to estimate $\dd^2 H$ on $T\Sigma^{\perp_{\omega_0}}$. We have
\[\dd^2 H((P,\dd \Ab^T(P)),(P,\dd \Ab^T(P)))=2B^2\|P\|^2_{\alpha}\,.\]
Then, by Taylor expansion near $\{x_1=\xi_1=0\}$,
\[H\circ\Psi(x,\xi)=H(j(z_2)+x_1\mathbf{e}+\xi_1\mathbf{f})=B(\varphi(x_2,\xi_2))\|x_1\mathbf{e}_1+\xi_1\mathbf{e}_2\|_\alpha^2+\mathcal{O}(|z_1|^3)\,.\]
Clearly, $(x_1,\xi_1)\mapsto B(\varphi(x_2,\xi_2))\|x_1\mathbf{e}_1+\xi_1\mathbf{e}_2\|_\alpha^2$ is a quadratic form with coefficients depending on $z_2$. For $z_2$ fixed, this quadratic form can be transformed by symplectomorphism into 
$B^2(\varphi(x_2,\xi_2))\sqrt{\det \alpha}|z_1|^2$. By perturbing this symplectomorphism, we find that there exists a symplectomorphism $\tilde\Psi$ such that
\[H\circ\tilde\Psi(x,\xi)=B(\varphi(x_2,\xi_2))\sqrt{\det \alpha}|z_1|^2+\mathcal{O}(|z_1|^3)\,.\]
By using the improved Egorov theorem, we may find a Fourier Integral Operator $U_h$, microlocally unitary near $\Sigma$, such that
\[U_h^*\mathscr{L}^{\mathrm{eff}} U_h=\mathrm{Op}^{\mathrm{W}}_h\widehat{ H}^{\mathrm{eff}}\,,\]
with
\[\widehat{H}^{\mathrm{eff}}=B(\varphi(x_2,\xi_2))\sqrt{\det \alpha}|z_1|^2-2h\kappa(\varphi(x_2,\xi_2))+\mathcal{O}(|z_1|^3+h|z_1|+h^2)\,,\]
locally uniformly with respecto to $(x_2,\xi_2)$.
This allows to implement a Birkhoff normal form,  as in \cite[Sections 2.3 \& 2.4]{RVN15}, and we get another Fourier Integral Operator $V_h$ such that 
\begin{equation}\label{eq.FIOV}
V_h^*\mathrm{Op}^{\mathrm{W}}_h\widehat{ H}^{\mathrm{eff}} V_h=\mathrm{Op}^{\mathrm{W}}_h\left(\check{ H}^{\mathrm{eff}}(\mathscr{I}_h, z_2, h)\right)+\mathrm{Op}^{\mathrm{W}}_h r_h\,,
\end{equation}
where $\mathscr{I}_h=\mathrm{Op}^{\mathrm{W}}_h (|z_1|^2)$ and $r_h=\mathcal{O}(|z_1|^\infty+h^\infty)$ (uniformly with respect to $z_2$). The first pseudo-differential in the R. H. S. of \eqref{eq.FIOV} is the quantization with respect to $(x_2,\xi_2)$ of the (operator) symbol $\check{ H}^{\mathrm{eff}}(\mathscr{I}_h, z_2, h)$ (commuting with the harmonic oscillator $\mathscr{I}_h$).
Moreover, $\check{H}^{\mathrm{eff}}$ satisfies
\[\check{ H}^{\mathrm{eff}}(I, z_2, h)=IB(\varphi(x_2,\xi_2))\sqrt{\det \alpha}-2h\kappa (\varphi(x_2,\xi_2))+\mathcal{O}(I^2+hI+h^2)\,.\]
We can prove that the eigenfunctions of $\mathscr{L}^{\mathrm{eff},0}$ (corresponding to the low lying spectrum) are microlocalized near $\Sigma$ and localized near the minimum of $B-2\kappa $, and also that the one of $\mathrm{Op}^{\mathrm{W}}_h\check{ H}^{\mathrm{eff}}$ are microlocalized near $0\in\R^4$. More precisely, for some smooth cutoff function on $\R$, $\chi$ and equaling $1$ near 0, and if $\psi$ is a normalized eigenfunction associated with an eigenvalue of order $h$, we have
\[\mathrm{Op}^{\mathrm{W}}_h \chi(h^{-2\delta}|z_1|^2)\psi=\psi+\mathscr{O}(h^\infty)\,,\quad \mathrm{Op}^{\mathrm{W}}_h \chi(|z_2|^2)\psi=\psi+\mathscr{O}(h^\infty)\,,\quad \delta\in\left(0,\frac 12\right)\,.\]
This implies that the low-lying eigenvalues of $\mathscr{L}^{\mathrm{eff},0}$ coincide modulo $\mathcal{O}(h^\infty)$ with the one of $\mathrm{Op}^{\mathrm{W}}_h\left(\check{ H}^{\mathrm{eff}}(\mathscr{I}_h, z_2, h)\right)$. By using the Hilbert basis of the Hermite functions, the low-lying eigenvalues are the one of $\mathrm{Op}^{\mathrm{W}}_h\check{ H}^{\mathrm{eff}}(h,z_2,h)$. Note that
\[\check{ H}^{\mathrm{eff}}(h,z_2,h)=h\left[B(\varphi(x_2,\xi_2))\sqrt{\det \alpha}-2\kappa (\varphi(x_2,\xi_2))\right]+\mathcal{O}(h^2)\,.\]
The non-degeneracy of the minimum of the principal symbol and the harmonic approximation give the conclusion.
\end{proof}
\appendix
\section{The constant curvature case}\label{sec:ball}

We treat here the case of the unit ball, $\Omega=\{x\in\R^3~:~|x|<1\}$, when the magnetic field is uniform and given by
\begin{equation}\label{eq:B=cst}
\Bb=(0,0,b)\quad{\rm with}~b>0\,.
\end{equation} 

\subsection{The critical regime}

 In the critical regime, where $\sigma=1$, the asymptotics in \eqref{eq:ev-critical} becomes  (see Remark \ref{rem:rem-c-r})
\begin{equation}\label{eq:ev-sec5}
\mu(h,\Bb)= -1-2h+\mathcal O(h^{6/5})\,,
\end{equation}
 but the magnetic field contribution is kept in the remainder term.

The contribution of the magnetic field is actually related to the ground state energy of the Montgomery  model \cite{Mont}
\begin{equation}\label{eq:Mont-op}
\lambda(\zeta)=\inf_{u\not=0}\int_\R\Big(|u'(s)|^2+\Big(\zeta+\frac{s^2}{2}\Big)^2|u(s)|^2 \Big)\,\dd s\quad(\zeta\in\R)\,.
 \end{equation}
 There exists a \emph{unique} $\zeta_0<0$ such that \cite{H-mont}
 \begin{equation}\label{eq:Mont-op-min}
 \nu_0:=\inf_{\zeta\in\R}\lambda(\zeta)=\lambda(\zeta_0)>0\,.
 \end{equation}
\begin{thm}\label{thm:ball-c}
\[\mu(h,\Bb)=-1-2h+ b^{4/3}h^{4/3}\nu_0+o(h^{4/3})\,.\]
\end{thm}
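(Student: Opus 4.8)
The strategy is to sharpen the estimate \eqref{eq:ev-sec5}, $\mu(h,\Bb)=-1-2h+\mathcal O(h^{6/5})$, by pinning down the magnetic term hidden in its remainder; since that term is of order $h^{4/3}$ — \emph{smaller} than the available $\mathcal O(h^{6/5})$ — the analysis must be carried one order beyond Section~\ref{sec.2}. For $\Omega$ the unit ball with $\Bb=(0,0,b)$ one has $\kappa_\Omega\equiv1$ and, writing $s$ for the signed arclength from the equator along a meridian, $\Bb\cdot\nf=-b\sin s$; hence the effective boundary potential $|\Bb\cdot\nf|-2\kappa_\Omega$ of Theorem~\ref{thm:ev} equals $-2$ identically on the equatorial circle $S=\{s=0\}$, and along $S$ the \emph{signed} quantity $\Bb\cdot\nf$ vanishes transversally. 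The corner of $|\Bb\cdot\nf|$ at $S$ is immaterial: the one–dimensional reduced model transverse to $S$ is a Montgomery operator built from the signed field $-bs+\mathcal O(s^3)$, not a Schrödinger operator with a $|s|$–potential. First, Agmon estimates exactly as in Proposition~\ref{prop:dec-c-r} — with $S$ a curve instead of a point, which changes nothing in the proof — give that every ground state is concentrated in $\{\mathrm{dist}(x,\partial\Omega)<h^\varrho\}\cap\{|s|<h^\varrho\}$ for some fixed $\varrho\in(0,\tfrac12)$.

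Next I would exploit the axial symmetry of $(\Omega,\Bb)$. With the gauge $\Ab=\tfrac b2(-x_2,x_1,0)$ the operator $\mathscr L_h$ attached to $q_h$ in \eqref{eq:qh-c} commutes with $-i\partial_\phi$, so $\mathscr L_h=\bigoplus_{m\in\ZZ}\mathscr L_{h,m}$ with, in spherical coordinates,
\[
\mathscr L_{h,m}=-\frac{h^2}{r^2}\partial_r\bigl(r^2\partial_r\bigr)-\frac{h^2}{r^2\sin\theta}\partial_\theta\bigl(\sin\theta\,\partial_\theta\bigr)+\Bigl(\frac{hm}{r\sin\theta}+\frac{b\,r\sin\theta}{2}\Bigr)^2
\]
on $L^2\bigl((0,1)\times(0,\pi),r^2\sin\theta\,\dd r\,\dd\theta\bigr)$ and the Robin condition $h\,\partial_r u=u$ at $r=1$. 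Thus $\mu(h,\Bb)=\inf_{m\in\ZZ}\mu_1(\mathscr L_{h,m})$, and the tangential localization restricts attention to indices with $\nu_m:=hm+\tfrac b2=\mathcal O(h^{2/3})$. For such an $m$, writing $t=1-r$ and $s=\theta-\tfrac\pi2$, I would run the fibered reduction of Section~\ref{sec:efop}: after the dilation $t=h\tau$ the normal operator is $-\partial_\tau^2$ on $(0,\infty)$ with $u'(0)=-u(0)$, whose ground state $e^{-\tau}$ carries the energy $-1-2h+\mathcal O(h^2)$ (the $-2h$ being the mean-curvature correction, cf.\ \eqref{eq.muy'} with $\kappa_\Omega\equiv1$). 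Projecting onto this mode and substituting $r\sin\theta=(1-h\tau)\cos s$, the potential averages, on the scale $|s|\lesssim h^{1/3}$, $|\tau|\lesssim1$, to $(\tfrac b2 s^2-\nu_m)^2$ up to $o(h^{4/3})$ — the discarded terms (the $\mathcal O(h\tau)$ shift of $\nu_m$, the $s^4$ and $ts^2$ contributions, the $h^2$ curvature-type potential from reducing the $\theta$–Laplacian to flat measure) being each $o(h^{4/3})$ there. One is thus left, modulo $o(h^{4/3})$ and uniformly in the relevant $m$, with the one–dimensional operator $\mathscr M_{\nu_m}=-h^2\partial_s^2+(\tfrac b2 s^2-\nu_m)^2$ on $L^2(\RR)$ (extending $s$ to $\RR$ is harmless by the localization).

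A dilation $s=(h/b)^{1/3}\sigma$ turns $\mathscr M_{\nu_m}$ into $h^{4/3}$ times a dilated Montgomery operator $-\partial_\sigma^2+(\tfrac{\sigma^2}{2}-\zeta_m)^2$, whose overall $b$–dependent constant one reads off to be $b^{4/3}$, with parameter $\zeta_m$ affine in $\nu_m$; since $\nu_m$ ranges over a set of spacing $h$, the $\zeta_m$ fill $\RR$ densely as $h\to0$, so $\inf\mathrm{spec}(\mathscr M_{\nu_m})\ge h^{4/3}b^{4/3}\nu_0-o(h^{4/3})$ for every $m$, with equality up to $o(h^{4/3})$ for a well-chosen $m(h)$ (namely with $\zeta_{m(h)}=\zeta_0+\mathcal O(h^{1/3})$, $\zeta_0$ the minimizer in \eqref{eq:Mont-op-min}). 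Adding the $-1-2h$ of the normal mode and taking the infimum over $m$ yields the lower bound; the matching upper bound follows by evaluating $q_h$ on the trial function (normal mode)$\,\times\,$(rescaled Montgomery ground state)$\,\times\,$(cut-off)$\,\times\,e^{im(h)\phi}$, as in Section~\ref{sec.2}. In the absence of the symmetry one would instead reduce to the effective two–dimensional boundary operator of Section~\ref{sec:efop}, localize it near the equatorial circle, and perform the same one–dimensional reduction there.

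The main obstacle is the error bookkeeping. Because the target remainder $o(h^{4/3})$ is smaller than what Theorem~\ref{thm:ev} and \eqref{eq:ev-sec5} provide, the metric, the potential $(\tfrac{hm}{r\sin\theta}+\tfrac{b\,r\sin\theta}{2})^2$, and the commutator terms of the fibered reduction must be expanded one order further than in Section~\ref{sec.2}, and one must check that on the scale $|s|\sim h^{1/3}$ — where $\nu_m$, $s^2$ and $h^{2/3}$ are all comparable — every neglected term is genuinely $o(h^{4/3})$, uniformly in the relevant angular momenta; the subsidiary delicate point is that the discreteness of $m$ does not prevent the Montgomery parameter $\zeta_m$ from attaining its minimizer.
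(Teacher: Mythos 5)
Your proposal follows essentially the same route as the paper: exploit the axial symmetry, localize near the equatorial circle and the boundary, pass to angular Fourier modes, project onto the ground transverse Robin mode, and identify a one–dimensional Montgomery operator whose spectral parameter runs over an asymptotically dense set as $m$ varies; the matching upper bound uses a normal-mode $\times$ Montgomery-ground-state $\times$ cut-off $\times$ $e^{im\varphi}$ trial function, exactly as in the appendix. Your observation that the corner of $|\Bb\cdot\nf|$ along the equator is immaterial — because the reduced transverse model is built from the \emph{signed} quantity $\Bb\cdot\nf$, which vanishes linearly — is the right key point.

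Two remarks. First, the Agmon estimate you cite, Proposition~\ref{prop:dec-c-r}, is not directly applicable here: its hypotheses (a unique, non-degenerate, isolated minimum of $|\Bb\cdot\nf|-2\kappa_\Omega$, together with \eqref{eq.gencond}) fail for the ball, where the minimum is the whole equatorial circle and the potential has a corner there. The paper instead proves the tailored Proposition~\ref{prop:dec-ball}, with the weaker weight $\exp(\mathrm{dist}(x,S)/h^{1/5})$ reflecting the \emph{linear} (not quadratic) growth of $|\Bb\cdot\nf|-1$ off $S$; this slower rate $h^{1/5}$ is what constrains the admissible cut-off scale $\rho$ (the paper takes $\rho=13/60$). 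Second — and this is an arithmetical gap — the dilation $s=(h/b)^{1/3}\sigma$ you write converts $\mathscr{M}_{\nu_m}=-h^2\partial_s^2+\bigl(\tfrac{b}{2}s^2-\nu_m\bigr)^2$ into $h^{4/3}b^{2/3}\bigl(-\partial_\sigma^2+(\tfrac{\sigma^2}{2}-\zeta_m)^2\bigr)$ with $\zeta_m=\nu_m/(b^{1/3}h^{2/3})$, so the $b$-dependent prefactor produced by your computation is $b^{2/3}$, not the $b^{4/3}$ you assert; the latter appears to have been copied from the theorem statement rather than read off the dilation. (For what it is worth, the constant is not stable across the paper either: the proof in the appendix arrives at $(b/2)^{2/3}$, starting from a spherical quadratic form whose magnetic term carries $\sin\theta$ where a direct derivation gives $\sin^2\theta$; in any case, neither the paper's derivation nor yours reproduces $b^{4/3}$, and you should be explicit about which constant your argument actually yields.)
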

Our approach to derive an effective Hamiltonian as in  Theorem~\ref{thm:ev-c}  do not apply in the ball case. As in \cite{HM3d},  the ground states do concentrate  near the circle
\[S=\{x=(x_1,x_2,0)\in\R^3~:~x_1^2+x_2^2=1\}\,.\] 
However, the ground states  do not concentrate near a single point of $S$, since the curvature is constant. The situation here is closer to that of the Neumann problem for the 3d ball \cite{FP-ball}. 

We can improve the localization of the ground states  near the set $S$, thanks  to the energy lower bound in \eqref{eq:qh-en-lb} and the asymptotics in \eqref{eq:ev-sec5}. In fact, any $L^2$-normalized ground state $u_h$ decays away from the set $S$ as follows.

\begin{proposition}\label{prop:dec-ball}
There exists positive constants $C,h_0$ such that, for all $h\in(0,h_0)$,
\[
\int_\Omega\left(|u_h|^2+ |(h\nabla-i\Ab) u_h|^2\right)\exp\left( \frac{{\rm dist}(x,S)}{h^{1/5}}\right)\dd x\leq C\,. 
\]
\end{proposition}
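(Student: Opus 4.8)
The plan is to run an Agmon-type (weighted-energy) argument, as in the proof of Proposition~\ref{prop:dec-c-r}, but now based on the energy lower bound \eqref{eq:qh-en-lb}, the ball-specific shape of the effective potential $U_h$ from \eqref{eq:potential}, and the energy asymptotics \eqref{eq:ev-sec5}. For the unit ball with $\mathbf{B}=(0,0,b)$ one has $\kappa\equiv 1$ and $\mathbf{B}\cdot\mathbf{n}(p(x))=b\,(p(x))_3$, so that for ${\rm dist}(x,\partial\Omega)<h^{2/5}$,
\[
U_h(x)=-1-2h+bh\,|(p(x))_3|-\tilde Ch^{6/5},
\]
while $U_h\equiv 0$ farther from the boundary. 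Using $\mu(h,\mathbf{B})\le-1-2h+\tilde Ch^{6/5}$ from \eqref{eq:ev-sec5}, this gives, in the boundary layer, $U_h(x)-\mu(h,\mathbf{B})\ge bh\,|(p(x))_3|-2\tilde Ch^{6/5}$, which is positive exactly when $|(p(x))_3|\gtrsim h^{1/5}$; since $|(p(x))_3|$ is comparable to ${\rm dist}(p(x),S)$ near $S$, this is what fixes the scale $h^{1/5}$ in the statement.

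Concretely, I would fix $\epsilon_0<\epsilon$ and $\chi\in C_c^\infty([0,2\epsilon_0))$ with $\chi\equiv1$ on $[0,\epsilon_0]$ (so that $p(x)$ is well defined on the support of $\chi({\rm dist}(x,\partial\Omega))$), and set
\[
\Phi(x)=\exp\!\Big(\chi\big({\rm dist}(x,\partial\Omega)\big)\,\tfrac{{\rm dist}(x,S)}{h^{1/5}}\Big),
\]
which is bounded, Lipschitz on $\overline\Omega$ (recall $x\mapsto{\rm dist}(x,S)$ is $1$-Lipschitz), with $|\nabla\Phi|\le Ch^{-1/5}\Phi$. Feeding $\Phi u_h$ into the standard localization (IMS) identity for the Robin eigenpair $(\mu(h,\mathbf{B}),u_h)$ of the form $q_h$ in \eqref{eq:qh-c},
\[
q_h(\Phi u_h)-h^2\!\int_\Omega|\nabla\Phi|^2|u_h|^2\,\dd x=\mu(h,\mathbf{B})\!\int_\Omega\Phi^2|u_h|^2\,\dd x,
\]
and bounding $q_h(\Phi u_h)$ below by $\int_\Omega U_h\Phi^2|u_h|^2\,\dd x$ via \eqref{eq:qh-en-lb}, I get $\int_\Omega\big(U_h-\mu(h,\mathbf{B})-Ch^{8/5}\big)\Phi^2|u_h|^2\,\dd x\le0$. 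Then split $\Omega$ into $A=\{{\rm dist}(x,\partial\Omega)\ge h^{2/5}\}$, where $U_h\equiv0$ so the coefficient is $\ge\frac14$; $B_2=\{{\rm dist}(x,\partial\Omega)<h^{2/5},\ |(p(x))_3|\ge R_0h^{1/5}\}$ with $R_0$ large but fixed (depending only on $b,\tilde C$), where the coefficient is $\ge\frac12 bh\,|(p(x))_3|\ge c_0h^{6/5}$; and the remaining region $B_1$, where ${\rm dist}(x,S)\le h^{2/5}+CR_0h^{1/5}$ forces $\Phi\le C_1$ uniformly while the coefficient is only $\ge -C h^{6/5}$. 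Combining and using $\|u_h\|_{L^2}=1$,
\[
\tfrac14\!\int_A\Phi^2|u_h|^2+c_0h^{6/5}\!\int_{B_2}\Phi^2|u_h|^2\le CC_1^2h^{6/5};
\]
the two powers $h^{6/5}$ cancel, giving $\int_{B_2}\Phi^2|u_h|^2\le C$, $\int_A\Phi^2|u_h|^2\le Ch^{6/5}$, and with $\int_{B_1}\Phi^2|u_h|^2\le C_1^2$ one obtains $\int_\Omega\Phi^2|u_h|^2\,\dd x\le C$. Since $\Phi^2\ge e^{{\rm dist}(x,S)/h^{1/5}}$ on $\{{\rm dist}(x,\partial\Omega)\le\epsilon_0\}$, and since on $\{{\rm dist}(x,\partial\Omega)>\epsilon_0\}$ the factor $e^{{\rm dist}(x,S)/h^{1/5}}\le e^{2h^{-1/5}}$ is killed by the boundary decay $\int_{{\rm dist}(x,\partial\Omega)>\epsilon_0}|u_h|^2\le Ce^{-\epsilon_0/(4h)}$ from \eqref{eq:dec-ef} (with $n=1$), the $|u_h|^2$-part of the Proposition follows. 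The magnetic-gradient part is then routine: re-inject the bound into the identity to get $q_h(\Phi u_h)\le C$, use a trace estimate $h\int_{\partial\Omega}\Phi^2|u_h|^2\le\frac12\|(-ih\nabla+\mathbf{A})(\Phi u_h)\|^2+C\|\Phi u_h\|^2$ to deduce $\|(-ih\nabla+\mathbf{A})(\Phi u_h)\|^2\le C$, and conclude via $\int_\Omega\Phi^2|(-ih\nabla+\mathbf{A})u_h|^2\le 2\|(-ih\nabla+\mathbf{A})(\Phi u_h)\|^2+2h^2\int|\nabla\Phi|^2|u_h|^2\le C$.

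The main obstacle is the bookkeeping around the ``turning set'': because \eqref{eq:ev-sec5} is known only up to $\mathcal O(h^{6/5})$, the potential barrier $U_h-\mu(h,\mathbf{B})$ may fail to be positive in an $h^{1/5}$-neighborhood of $S$, so one must (i) pick the weight exponent of order exactly $h^{-1/5}$ — any larger and $\Phi$ would blow up on region $B_1$, any smaller and the statement is weaker than claimed — and (ii) check that the positivity available on $B_2$ near that set, which is only of size $\sim h^{6/5}$, precisely matches the $h^{6/5}$ error generated on $B_1$, so that a genuine constant (not an $h$-power) survives. A secondary, harmless point is the non-smoothness of $x\mapsto{\rm dist}(x,S)$ on the cut locus (handle by a.e.\ differentiability or a smooth $1$-Lipschitz regularization), together with the elementary geometric comparison ${\rm dist}(x,S)\lesssim |(p(x))_3|+{\rm dist}(x,\partial\Omega)$ valid in the boundary layer.
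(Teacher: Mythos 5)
Your proposal is correct and follows essentially the same route as the paper's proof: an Agmon-type identity with the weight $\exp({\rm dist}(x,S)/h^{1/5})$, the energy lower bound \eqref{eq:qh-en-lb}, the eigenvalue asymptotics \eqref{eq:ev-sec5}, the observation that $|\Bb\cdot\nf(p(x))|$ vanishes linearly on $S$ (so the barrier $U_h-\mu$ becomes positive only beyond an $h^{1/5}$-neighbourhood of $S$), and the boundary-decay estimate \eqref{eq:dec-ef} to control the complementary region. The only genuine cosmetic difference is that you insert a cutoff $\chi({\rm dist}(x,\partial\Omega))$ into the weight $\Phi$, which lets you keep the far-field region $A=\{{\rm dist}(x,\partial\Omega)\geq h^{2/5}\}$ on the left-hand side with coefficient $\geq\tfrac14$, whereas the paper uses the un-cutoff weight and transfers the far-field gradient and mass terms to the right-hand side before killing them with \eqref{eq:dec-ef}; both organisations are equivalent, and your version arguably makes the bookkeeping at the ``turning set'' slightly more transparent.
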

\begin{proof}
Consider the function $\Phi(x)=\exp\left( \frac{{\rm dist}(x,S)}{h^{1/5}}\right)$. It satisfies
\[h^2|\nabla\Phi|^2=h^{8/5} |\Phi|^2~{\rm a.e.}\]
We write
\begin{equation}\label{eq:app-IMS}
q_h(\Phi u_h)-h^2\int_\Omega |\nabla \Phi |^2|u_h|^2\,dx=\underset{< 0}{\underbrace{\mu(h)}}\| \Phi   u_h\|_{L^2(\Omega)}^2\,,
\end{equation}
then we use \eqref{eq:ev-sec5} and \eqref{eq:qh-en-lb}. We get
\begin{multline*}
h\int_{\{{\rm dist}(x,
\partial\Omega)<h^{2/5}\} }\Big(|\Bb\cdot \nf(p(x))|-1-\tilde C h^{1/5}-h^{8/5}\Big)|\Phi u_h|^2\dd x
\\ \leq 
\int_{\{{\rm dist}(x,
\partial\Omega)\geq h^{2/5}\} } \big( |(h\nabla-i\Ab)\Phi u_h|^2+ h^{8/5}|\Phi u_h|^2\big)\dd x\,.\end{multline*}
Now we use the decay away from the boundary, \eqref{eq:dec-ef}, to estimate the term on the right hand side of the above inequality. We obtain, for  $h$ sufficiently small,
\[h\int_{\{{\rm dist}(x,
\partial\Omega)<h^{2/5}\} }\Big(|\Bb\cdot \nf(p(x))|-1-\tilde C h^{1/5}-h^{8/5}\Big)|\Phi u_h|^2\dd x\leq 
\exp(-h^{-1/5})\,.\]
The function  $\Bb\cdot \nf -1$ vanishes linearly on $S$,   so  $|\Bb\cdot\nf(p(x))|\geq 1+c\,{\rm dist}(x,\partial S)$ for a positive constant $c$. This yields
\[h\int_{\{{\rm dist}(x,
\partial\Omega)<h^{2/5}\} }\Big(c\,{\rm dist}(x,\partial S)-\tilde C h^{1/5}-h^{3/5}\Big)|\Phi u_h|^2\dd x\leq 
\exp(-h^{-1/5})\,.\]
Since $|\Phi|\leq \exp(3c^{-1}\tilde C)$ for ${\rm dist}(x,S)\leq 3c^{-1}\tilde C h^{1/5}$, the foregoing estimate yields
\begin{multline*}\int_{\{{\rm dist}(x,
\partial\Omega)<h^{2/5}\} }|\Phi u_h|^2\dd x \\ 
\leq 
\tilde C^{-1} h^{-6/5}\left( \exp(-h^{-1/5})+ 2\tilde C  h^{6/5}\int_{\{{\rm dist}(x,
S)<3c^{-1}\tilde Ch^{1/5}\} }|\Phi u_h|^2\dd x\right)=\mathcal O (1)\,.
\end{multline*}
Thanks to \eqref{eq:dec-ef}, we get
\[\| \Phi u_h\|^2_{L^2(\Omega)}=\mathcal O (1)\,.\]
Implementing this into \eqref{eq:app-IMS} finishes the proof.
\end{proof}

\begin{rem}\label{rem:dec-ball}
As a consequence of Proposition~\ref{prop:dec-ball} and the decay estimate in \eqref{eq:dec-ef},   we deduce that, for any $n\in \mathbb N$, there exist positive constants $ C_n,h_n>0$ such that, for all $h\in(0,h_n)$,
\begin{equation}\label{eq:dec-s}
\int_\Omega \big({\rm dist}(x,S)\big)^n\left(|u_h|^2+ |(h\nabla-i\Ab) u_h|^2\right)\dd x\leq C_n h^{n/5}\,,
\end{equation}
and
\begin{equation}\label{eq:dec-t}
\int_\Omega \big({\rm dist}(x,\partial\Omega)\big)^n\left(|u_h|^2+ |(h\nabla-i\Ab) u_h|^2\right)\dd x\leq  C_n h^{n}\,. 
\end{equation}
\end{rem}

In spherical coordinates, 
\[\R_+\times[0,2\pi)\times (0,\pi)\ni (r,\varphi,\theta)\mapsto x=(r\cos\varphi\sin\theta,r\sin\varphi\sin\theta,r\cos\theta)\,,\]
 the quadratic form and $L^2$-norm are
\begin{multline*}
\mathfrak q_h(u)
=\\
\int_0^{2\pi}\int_0^\pi\int_0^{1}
\left( |h\partial_r\tilde u|^2+\frac1{r^2}|h\partial_\theta\tilde u|^2+\frac1{r^2\sin^2\theta}\Bigl|\Bigl(h\partial_\varphi-i\frac{br^2}{2}\sin\theta\Bigr)\tilde u\Bigr|^2 \right)r^2\sin\theta \,  \dd r\dd\theta \dd\varphi\\
-h
\int_0^{2\pi}\int_0^\pi |\tilde u|^2_{/_{r=1}} \sin\theta\, \dd\theta \dd\varphi \,,
\end{multline*}
\[
\|u\|_{L^2(\Omega)}^2=\int_0^{2\pi}\int_0^\pi\int_0^{1} |\tilde u|^2r^2\sin\theta\, \dd r\dd\theta \dd\varphi\,,
\]
where 
\[\tilde u(r,\varphi,\theta)=u(x)\,.\]
Note that the distances to the boundary and to the set $S$ are expressed as follows
\[{\rm dist}(x,\partial\Omega)=1-r~ {\rm and}\quad {\rm dist}(x,S)=\cos\theta\,.\]
Let $\rho\in(1/5,1)$ and consider $\hat S_{\rho}=\{(r,\varphi,\theta)~:~1-h^\rho< r<1,~0\leq\varphi<2\pi~\&~|\theta-\frac\pi2|<h^\rho\}$. 
We introduce the function
\begin{equation}\label{eq:v-ball}
v(r,\varphi,\theta)=\chi\Big(h^{-\rho}(1-r)\Big)\chi\Big(h^{-\rho}\big(\theta-\frac\pi2\big)\Big) \tilde u_h(r,\varphi,\theta)\,,
\end{equation}
with $\chi\in C_c^\infty(\R;[0,1])$, ${\rm supp}\chi\subset(-1,1)$ and $\chi=1$ on $[-\frac12,\frac12]$.

Then, 
by the exponential decay of the ground state $u_h$,
\begin{multline*}
\mu(h,\Bb)=q_h(u_h)=\\
\int_{\hat S_\rho}
\left( |h\partial_r v|^2+\frac1{r^2}|h\partial_\theta v|^2+\frac1{r^2\sin^2\theta}\Bigl|\Bigl(h\partial_\varphi-i\frac{br^2}{2}\sin\theta\Bigr)v\Bigr|^2 \right)r^2\sin\theta\, \dd r\dd\theta \dd\varphi\\
-h
\int_{\hat S_{\rho}\cap\{r=1\}} |v|^2 \sin\theta \, \dd\theta \dd\varphi  +\mathcal O(h^\infty)\,.
\end{multline*}
In $\hat S_\rho$, it holds
\[r=1-{\rm dist}(x,\partial\Omega)=\mathcal O(h^\rho)~{\rm and}~\sin\theta=\cos\left(\theta-\frac\pi2\right)=1-\frac{1}2{\rm dist}(x,S)^2+\mathcal O\Big({\rm dist}(x,S)^4\Big)\,.\]
We choose $\rho=\frac{13}{60}\in(\frac15,\frac16)$. It results then from \eqref{eq:dec-s} and \eqref{eq:dec-t}, 
\begin{multline*}
\mu(h,\Bb)\geq 
\int_{\hat S_\rho}
|h\partial_r v|^2r^2\sin\theta \, \dd r\dd\theta \dd\varphi-h
\int_{\hat S_{\rho}\cap\{r=1\}} |v|^2 \sin\theta \, \dd\theta \dd\varphi \\
+(1-h^{\frac1{30} })\int_{\hat S_\rho}\left(|h\partial_\theta v|^2+\Big|\Big(h\partial_\varphi -i\frac{b}{2}\Big(1-\frac{1}2\Big(\theta-\frac\pi2\Big)^2\Big)v\Big|^2 \right)r^2\, \dd r\dd\theta \dd\varphi
 +\mathcal O(h^{\frac{8}5-\frac1{30}})\,.
\end{multline*}
Using \eqref{eq:1D-eff-n} with $\sigma=1$ and $\kappa\equiv1$, we get
\[\int_{\hat S_\rho}
|h\partial_r v|^2r^2\sin\theta \, \dd r\dd\theta \dd\varphi-h
\int_{\hat S_{\rho}\cap\{r=1\}} |v|^2 \sin\theta \, \dd\theta \dd\varphi \geq -1-2h+\mathcal O(h^2)\,.\]
It remains to study the quadratic form
\[q^{\rm tg}(v)=\int_{\hat S_\rho}\left(|h\partial_\theta v|^2+\Big|\Big(h\partial_\varphi -i\frac{b}{2}\Big(1-\frac{1}2\Big(\theta-\frac\pi2\Big)^2\Big)v\Big|^2 \right)r^2\,\dd r\dd\theta \dd\varphi\,.\]
Decomposing $v$ in Fourier modes, $v=\sum\limits_{m\in\mathbb Z}v_m e^{im\varphi}$, and using the change of variable
\[s=\left(\frac{b}2\right)^{1/3}h^{-1/3} \Big(\theta-\frac\pi2\Big)\,,\]
we obtain 
\[
q^{\rm tg}(v)=
h^{4/3}\left(\frac{b}{2}\right)^{2/3}\sum_{m\in\mathbb Z}\int_{1-h^\rho}^1r^2dr\int_\R
 \left(|\partial_s v_m|^2+\Big|\Big(\zeta_{m,h}-\frac12s^2\Big)^2v_m\Big|^2 \right)\,\left(\frac{b}{2}\right)^{-1/3}h^{1/3}ds\,.
\]
where
\[\zeta_{m,h}= \frac{2mh}{b}-1\]
We can now bound from below the foregoing quadratic form by the ground state energy $\nu_0$ of the Montgomery model. We end up with
\[q^{\rm tg}(v)\geq \nu_0 \left(\frac{b}{2}\right)^{2/3} h^{4/3}\int_{1-h^\rho}^1r^2dr\int_0^{2\pi}\int_\R|v|^2\dd\theta \dd\varphi=
  \nu_0 \left(\frac{b}{2}\right)^{2/3} h^{4/3}+\mathcal O(h^{\frac43+\rho})\,.\]
A matching upper bound can be obtained by constructing a suitable trial state related to the Montgomery model:
\[ v = \chi\Big(h^{-\rho}(1-r)\Big)\chi\Big(h^{-\rho}\big(\theta-\frac\pi2\big)\Big) w\]
where $\chi$ is as in \eqref{eq:v-ball}, $\rho=\frac{13}{60}$ and
\[w= \exp\left( i \frac{(b\zeta_0+1)\varphi }{h}  \right) u_0\big(h^{-1/2}(r-1) \big)f_{\zeta_0}\left( \left(\frac{b}2\right)^{1/3}h^{-1/3} \Big(\theta-\frac\pi2\Big)\right)\,. \]
Here $u_0(\tau)= \sqrt{2}\exp(-\tau )$ and $f_{\zeta_0}$ is the positive ground state of the Montgomery model in \eqref{eq:Mont-op}  for $\zeta=\zeta_0$ introduced in  \eqref{eq:Mont-op-min}.
\subsection{$h$-Bounded fields}

We consider now the regime where $\sigma=0$ in \eqref{eq:b} and $\Bb$ is given as in \eqref{eq:B=cst}. The relevant semiclassical parameter is then $h=\gamma^{-\frac 12}$ and the eigenvalue $\lambda(\gamma,\bb)$ is given as follows
\[\lambda(\gamma,\bb)=h^{-2} \mu(h,\Bb)\]
where 
$ \mu(h,\Bb)$ is now the ground state energy of the quadratic form
\begin{equation}\label{eq:qf-b}
\mathfrak q_h^b(u)=\int_\Omega|(-ih\nabla+bh\Ab_0)u(x)|^2\dd x-h^{3/2}\int_{\partial\Omega}|u(x)|^2\dd s(x)\,.
\end{equation}
The ground state energy $\mu(h,\Bb)$  depends on the magnetic field through the following effective eigenvalue,
\[
\lambda_m(b)=\inf_{f\in \mathcal D_m\setminus\{0\}}\frac{q_{m,b}(f)}{\|f\|_{\mathcal H}^2}\,,
\]
where $\mathcal H=L^2\big((0,\pi);\sin\theta\, \dd\theta\big)$,
\[\mathcal D_m=\begin{cases}\{f\in \mathcal H~:~\frac1{\sin\theta}\,f,f'\in\mathcal H\}&{\rm if}~m\not=0\\
\{f\in \mathcal H~:~f'\in\mathcal H\}&{\rm if~}m=0\end{cases}\]
and
\[q_{m,b}(f)=
\int_0^\pi\left(|f'(\theta)|^2+\left(\frac{m}{\sin\theta}-\frac{b}2\right)^2|f|^2\right)\,\sin\theta \, \dd\theta\,.
\]

\begin{thm}\label{thm:c-ev} 
The eigenvalue $\mu(h,\Bb)$ satisfies as $h\to0_+$,
\[\mu(h,\Bb)=-h+2h^{3/2} +h^2 \mathfrak e(b)+o(h^2)\,,\]
where 
\begin{equation}\label{eq:en-eff}
\mathfrak e(b)=\inf_{m\in\mathbb Z}\lambda_m(b)\,.
\end{equation}
\end{thm}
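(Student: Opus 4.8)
The plan is a boundary-layer (dimensional reduction) argument in the spirit of Pankrashkin--Popoff, adapted to the borderline Robin exponent $h^{3/2}$ (formally $\sigma=2$, $\alpha=\tfrac32$ in \eqref{eq:alpha}) and exploiting the axial symmetry of the ball. First I would localize near the sphere: since $\mu(h,\Bb)=-h+o(h)$ (a consequence of \eqref{eq:lambda-b=0} after rescaling, or of the estimates of Section~\ref{sec.2}) while $\mathfrak q_h^b$ is nonnegative away from $\partial\Omega$ (bounded below there by the bottom of the magnetic Dirichlet Laplacian, of size $\mathcal O(h^2)$), a standard Agmon estimate (cf.\ \cite{HK-tams} and the proof of Proposition~\ref{prop:dec-ball}) gives, for any $L^2$-normalized ground state $u_h$,
\[
\int_\Omega\Big(|u_h|^2+h\,\bigl|(-i\nabla+bh\Ab_0)u_h\bigr|^2\Big)\exp\!\Big(\tfrac{c\,{\rm dist}(x,\partial\Omega)}{h^{1/2}}\Big)\,\dd x\le C\,,
\]
so it is enough to study $\mathfrak q_h^b$ on functions supported in a tube $\{{\rm dist}(x,\partial\Omega)<h^\rho\}$, $\rho\in(0,\tfrac12)$. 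Contrary to the critical regime $\sigma=1$ there is no further concentration on a proper subset of $\partial\Omega$: the magnetic correction here lives at order $h^2$ and is dictated by the full magnetic Laplacian on the boundary sphere, whose ground state is spread out.

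Next I would use the invariance of both $\Omega$ and $\Bb=(0,0,b)$ under rotations about the $x_3$-axis: in spherical coordinates the operator associated with $\mathfrak q_h^b$ commutes with $-i\partial_\varphi$, and writing $u=\sum_{m\in\ZZ}e^{im\varphi}u_m(r,\theta)$ decomposes it into an orthogonal sum of two-dimensional operators $\mathscr L_{h,m}$ on $L^2\bigl((0,1)\times(0,\pi);r^2\sin\theta\,\dd r\,\dd\theta\bigr)$, with $\mu(h,\Bb)=\inf_{m\in\ZZ}\mu_1(\mathscr L_{h,m})$; the quadratic form of $\mathscr L_{h,m}$ is the spherical one displayed just before the statement, restricted to the $m$-th mode. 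For each fixed $m$ I would perform the transverse reduction in the normal variable $t=1-r$ (rescaled to $\tau=h^{-1/2}t$): the purely radial part of $\mathscr L_{h,m}$ is the one-dimensional Robin operator $-h^2r^{-2}\partial_r(r^2\partial_r)$ on $L^2(r^2\dd r)$ with Robin parameter $h^{-1/2}$ — independent of $m$ and of $\theta$, since the unit sphere has constant mean curvature $\kappa_\Omega\equiv1$ — whose bottom is, by \eqref{eq:1D-eff-n} (and \cite{HK-tams}),
\[
\lambda^\perp(h)=-h+2h^{3/2}+\mathcal O(h^2)\,,
\]
with a $\theta$-independent, exponentially localized (scale $h^{1/2}$) ground state $g_h(r)$. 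Projecting onto $g_h$, exactly as in the reduction of Section~\ref{sec:efop} (Proposition~\ref{prop.eff} and the lemmas preceding it), and replacing the metric and potential coefficients by their values at $r=1$ (an $\mathcal O(h^{1/2})$ relative correction to the tangential form, hence $\mathcal O(h^{5/2})$ in absolute terms, because $g_h$ is concentrated at scale $h^{1/2}$), I would obtain, uniformly for $|m|\le M$ with $M$ fixed,
\[
\bigl|\mu_1(\mathscr L_{h,m})-\lambda^\perp(h)-h^2\lambda_m(b)\bigr|\le C_M\,h^{5/2}\,,
\]
where $\lambda_m(b)$ is the bottom of $q_{m,b}$ (the component of the boundary magnetic Laplacian in mode $m$), the cross term with the orthogonal complement $\Pi^\perp u$ being controlled by $\mathcal O(h)\|\Pi^\perp u\|^2$ and absorbed by the transverse spectral gap.

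For the remaining modes one only needs the crude lower bound $\mu_1(\mathscr L_{h,m})\ge\lambda^\perp(h)+c\,h^2m^2$, coming from $\bigl(\tfrac m{\sin\theta}-\tfrac b2\bigr)^2\ge\tfrac12\tfrac{m^2}{\sin^2\theta}-C\ge\tfrac12 m^2-C$. Since $\lambda_m(b)\to+\infty$ as $|m|\to\infty$, the infimum $\mathfrak e(b)=\inf_m\lambda_m(b)$ is attained at some $|m_0|\le M_0$; combining the two bounds over $|m|\le M_0$ and over $|m|>M_0$ yields $\mu(h,\Bb)\ge\lambda^\perp(h)+h^2\mathfrak e(b)-Ch^{5/2}$. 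The matching upper bound follows from the Min--Max principle applied to the trial state $g_h(r)\,e^{im_0\varphi}f_{m_0}(\theta)$, with $f_{m_0}$ the ground state of $q_{m_0,b}$. Collecting these estimates gives $\mu(h,\Bb)=\lambda^\perp(h)+h^2\mathfrak e(b)+o(h^2)=-h+2h^{3/2}+h^2\mathfrak e(b)+o(h^2)$.

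The main obstacle is the error control of the transverse/tangential splitting down to $o(h^2)$: one must disentangle the radial operator from the spherical one while the radial weight $r^2$ and the factors $r^{-2}$, $(r^2\sin^2\theta)^{-1}$ vary across the boundary layer, and prove that the resulting cross and commutator contributions are $\mathcal O(h^{5/2})$. As in Section~\ref{sec:efop} this rests on the a priori estimate that the tangential momenta $P_\ell$ act as $\mathcal O(h^{1/2})$ on low-lying eigenfunctions. A secondary technical point is the uniformity in $m\in\ZZ$; it is handled by the quadratic lower bound on $\lambda_m(b)$, which confines the relevant Fourier modes to a finite set and reduces the infimum over $\ZZ$ to finitely many comparisons.
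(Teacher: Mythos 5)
Your proposal follows essentially the same route as the paper's Appendix~A.2: exponential (Agmon) localization of the ground state near the sphere, orthogonal decomposition into azimuthal Fourier modes $e^{im\varphi}$ so that $\mu(h,\Bb)=\inf_m\mu_1(\mathscr L_{h,m})$, a one-dimensional transverse Robin reduction at scale $h^{1/2}$ (the curvature correction coming from the weight $r^2=(1-h^{1/2}\tau)^2$), a term-by-term lower bound yielding $\lambda^\perp(h)+h^2\lambda_m(b)$ plus a crude $ch^2m^2$ bound eliminating large $|m|$, and a separable trial state $g_h(r)e^{im_0\varphi}f_{m_0}(\theta)$ for the matching upper bound. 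The paper does not bother with the projection/spectral-gap bookkeeping you import from Section~\ref{sec:efop}; it simply bounds the transverse and tangential pieces of $\hat{\mathfrak q}^{b,\rho}_{h,m}$ separately after replacing the factors $(1-h^{1/2}t)^2$ by $1+\mathcal O(h^\rho)$ on the boundary layer, and its upper-bound trial state carries a two-term radial expansion $\sqrt2\big(1+\tfrac{(1-r)^2}{8h}-\tfrac14\big)e^{h^{-1/2}(r-1)}$, but neither of these is a genuine departure from your plan. One small remark: you quote $\lambda^\perp(h)=-h+2h^{3/2}+\mathcal O(h^2)$ citing \eqref{eq:1D-eff-n}, but that formula with $\kappa(x_*)=1$ on the unit sphere gives $-h-2h^{3/2}+\mathcal O(h^2)$ (and the appendix's own computation produces $-1-2h^{1/2}-h$ before rescaling); the $+2h^{3/2}$ appears to be a sign slip carried over from the statement of Theorem~\ref{thm:c-ev} rather than a defect of your argument.
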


The effective eigenvalue, $\lambda_m(b)$ for $m=0$, satisfies $\lambda_0(b)=\frac{b^2}4$ with the  corresponding  ground state $f_{0,b}\equiv 1$.

The ground states decay exponentially away from the boundary, so we may write
\begin{equation}\label{eq:tmu}
\mu(h,b)=\tilde\mu(h,b,\rho)+\mathcal O(h^\infty)
\end{equation}
where $\rho\in(0,\frac12)$ is fixed and $\tilde\mu(h,b,\rho)$ is the eigenvalue on the spherical shell
\[\Omega_h=\{ 1-h^\rho<r<1\}\]
with Dirichlet condition on the interior boundary $\{r=1-h^\rho\}$ and  defined via the following quadratic form (expressed in spherical coordinates)
\begin{align}\label{eq:qf-spherical}
\tilde{\mathfrak q}_h^{b,\rho}(u)=&
h^2\int_0^{2\pi}\int_0^\pi\int_{1-h^\rho}^{1}
\left( |\partial_ru|^2+\frac1{r^2}|\partial_\theta u|^2+\frac1{r^2\sin^2\theta}\Bigl|\Bigl(\partial_\varphi-i\frac{br^2}{2}\sin\theta\Bigr)u\Bigr|^2 \right)r^2\,\sin\theta\,\dd r\dd\theta \dd\varphi \nonumber\\
&-h^{3/2}
\int_0^{2\pi}\int_0^\pi |u|^2_{/{r=1}} \sin\theta \,  \dd\theta \dd\varphi \,.
\end{align}
We decompose into Fourier modes (with respect to $\varphi\in[0,2\pi)$), and get the family of quadratic forms indexed by $m\in\mathbb Z$,
\begin{multline}\label{eq:qf-F-m}
\tilde{\mathfrak q}_{h,m}^{b,\rho}(u_m)=
h^2\int_0^\pi\int_{1-h^\rho}^{1}
\left( |\partial_ru_m|^2+\frac1{r^2}|\partial_\theta u_m|^2+\frac1{r^2}\Bigl|\Bigl(\frac{m}{\sin\theta}-\frac{br^2}{2}\Bigr)u_m\Bigr|^2 \right)r^2\,\sin\theta\,\dd r\dd\theta \\
-h^{3/2}
\int_0^\pi |u_m|^2_{/_{r=1}} \sin\theta \, \dd\theta \,.
\end{multline}
Finally, we introduce the large parameter
\begin{equation}\label{eq:delta}
\delta=h^{\rho-\frac12}
\end{equation}
and the change of variable, $r\mapsto t= h^{-1/2}(1-r)$, to obtain 
the new quadratic form
\begin{multline}\label{eq:qf-F-m*}
\hat{\mathfrak q}_{h,m}^{b,\rho}(v)=
\int_0^\pi\int_{0}^{\delta}
\left( |\partial_t v|^2+\frac{h}{(1-h^{1/2}t)^2}|\partial_\theta v|^2\right.
\\\left.+\frac{h}{(1-h^{1/2}t)^2}\Bigl|\Bigl(\frac{m}{\sin\theta}-\frac{b(1-h^{1/2}t)^2}{2}\Bigr)v\Bigr|^2 \right)(1-h^{1/2}t)^2\sin\theta \, \dd t\dd\theta -
\int_0^\pi |v|^2_{/_{t=0}}  \sin\theta \, \dd\theta \,.
\end{multline}
Using \cite[Sec.~2.6]{KS}, 
 we  write a lower bound for the  transversal quadratic form as follows
\[
\int_{0}^{\delta}
 |\partial_t v|^2 (1-h^{1/2}t)^2 \dd t -
 |v|^2\Big|_{t=0} \geq \big(-1-2h^{1/2} -h+ o(h)\big)\int_{0}^{\delta}
 | v|^2 (1-h^{1/2}t)^2 \dd t \,.\]
As for the tangential quadratic form, we bound it from below using the effective eigenvalue $\lambda_m(b)$ as follows
\begin{multline*}
\int_0^\pi \left(\frac{h}{(1-h^{1/2}t)^2}|\partial_\theta v|^2+\frac{h}{(1-h^{1/2}t)^2}\Bigl|\Bigl(\frac{m}{\sin\theta}-\frac{b(1-h^{1/2}t)^2}{2}\Bigr)v\Bigr|^2 \right)\sin\theta \, \dd\theta\\ \geq \big( h +o(h)\big)\lambda_m(b)\int_0^\pi | v|^2\sin\theta \, \dd\theta\,.
\end{multline*}
Inserting the two foregoing lower bounds into \eqref{eq:qf-F-m*}, minimizing over $m\in\mathbb Z$, we get the lower bound part in Theorem~\ref{thm:c-ev}.

As for the upper bound part in Theorem~\ref{thm:c-ev}, we use the trial state $v$ defined in the spherical coordinates as follows (see   \cite[Sec.~2.6]{KS})
\[\tilde v(r,\theta,\varphi)=\sqrt{2} \left(1+\Big(\frac{(1-r)^2 }{8h}-\frac14 \Big) \right)e^{h^{-1/2}(r-1) }\chi\big(h^{-\rho}(1-r)\big) f(\theta)\,,\] 
where $\chi$ is a cut-off function.  The function $f\in\mathcal D_m\setminus\{0\}$ is arbitrary. We compute
$q_h^b(v)$ introduced in \eqref{eq:qf-b}. We first minimize over $f$,  then over $m$, and get the desired  upper bound.\\

\begin{center}
\begin{figure}
\includegraphics[scale=0.5]{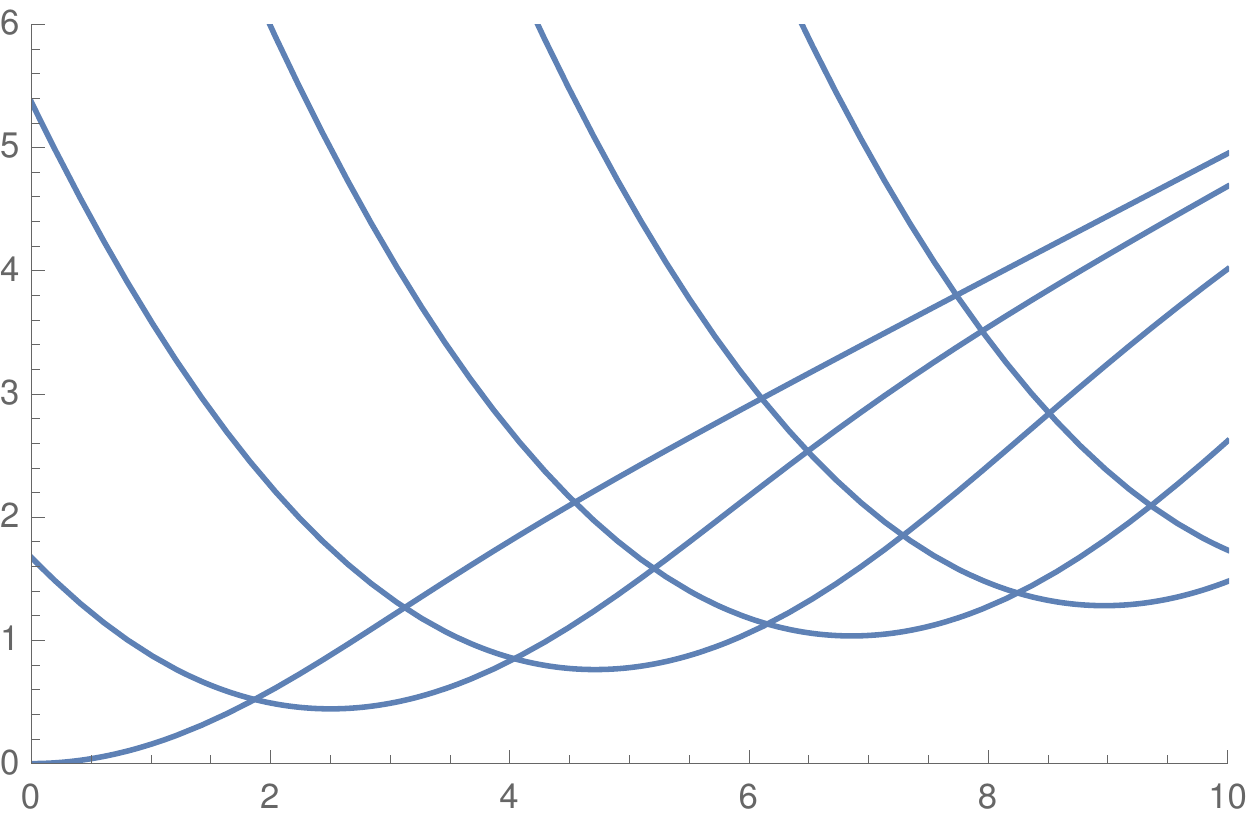}
\caption{The eigenvalues $\lambda_m(b)$ plotted as functions of $b$. The graph indicates a non-diamagnetic effect: the function $b\mapsto\inf_{m\in\mathbb Z}\lambda_m(b)$ is not monotonic. } \label{fig1} 
\end{figure}
\end{center}

{\bf Acknowledgments} The authors would like to thank M.P. Sundqvist for  Fig.~\ref{fig1}. This work started when AK visited the Laboratoire Jean Leray (Nantes) in January 2020 with the financial support of the programme D\'efimaths (supported by the region Pays de la Loire).  The research of AK is partially supported by the Lebanese University within the project
``Analytical and numerical aspects of the Ginzburg-Landau model''.

\bibliographystyle{abbrv}
\bibliography{biblio.bib}

\begin{thebibliography}{10}

\bibitem{FP-ball}
S.~Fournais and M.~Persson.
\newblock Strong diamagnetism for the ball in three dimensions.
\newblock {\em Asymptot. Anal.}, 72(1-2):77--123, (2011).

\bibitem{H-ln}
B.~Helffer.
\newblock {\em Semi-Classical Analysis for the Schr\"odinger Operator and
  Applications}, volume 1336 of {\em Nankai Institute of Mathematics, Tianjin,
  P.R. China}.
\newblock Springer-Verlag Berlin Heidelberg, 1998.

\bibitem{H-mont}
B.~Helffer.
\newblock The {M}ontgomery model revisited.
\newblock {\em Colloq. Math.}, 118(2):391--400, 2010.

\bibitem{Hel}
B.~Helffer.
\newblock {\em Spectral theory and its applications}, volume 139 of {\em
  Cambridge Studies in Advanced Mathematics}.
\newblock Cambridge University Press, Cambridge, 2013.

\bibitem{HK-tams}
B.~Helffer and A.~Kachmar.
\newblock Eigenvalues for the {R}obin {L}aplacian in domains with variable
  curvature.
\newblock {\em Trans. Amer. Math. Soc.}, 369(5):3253--3287, 2017.

\bibitem{HKR17}
B.~Helffer, A.~Kachmar, and N.~Raymond.
\newblock Tunneling for the {R}obin {L}aplacian in smooth planar domains.
\newblock {\em Commun. Contemp. Math.}, 19(1):1650030, 38, 2017.

\bibitem{HKo}
B.~Helffer and Y.~A. Kordyukov.
\newblock Semiclassical spectral asymptotics for a two-dimensional magnetic
  {S}chr\"{o}dinger operator: the case of discrete wells.
\newblock In {\em Spectral theory and geometric analysis}, volume 535 of {\em
  Contemp. Math.}, pages 55--78. Amer. Math. Soc., Providence, RI, 2011.

\bibitem{HKo1}
B.~Helffer and Y.~A. Kordyukov.
\newblock Accurate semiclassical spectral asymptotics for a two-dimensional
  magnetic {S}chr\"{o}dinger operator.
\newblock {\em Ann. Henri Poincar\'{e}}, 16(7):1651--1688, 2015.

\bibitem{HM3d}
B.~Helffer and A.~Morame.
\newblock Magnetic bottles for the {N}eumann problem: curvature effects in the
  case of dimension 3 (general case).
\newblock {\em Ann. Sci. \'{E}cole Norm. Sup. (4)}, 37(1):105--170, 2004.

\bibitem{K-jmp}
A.~Kachmar.
\newblock On the ground state energy for a magnetic {S}chr\"{o}dinger operator
  and the effect of the {D}e{G}ennes boundary condition.
\newblock {\em J. Math. Phys.}, 47(7):072106, 32, 2006.

\bibitem{K-dia}
A.~Kachmar.
\newblock Diamagnetism versus {R}obin condition and concentration of ground
  states.
\newblock {\em Asymptot. Anal.}, 98(4):341--375, 2016.

\bibitem{KKR16}
A.~Kachmar, P.~Keraval, and N.~Raymond.
\newblock Weyl formulae for the {R}obin {L}aplacian in the semiclassical limit.
\newblock {\em Confluentes Math.}, 8(2):39--57, 2016.

\bibitem{KS}
A.~Kachmar and M.~P. Sundqvist.
\newblock {Counterexample to strong diamagnetism for the magnetic Robin
  Laplacian}.
\newblock {\em arXiv:1910.12499}, 2019.

\bibitem{KR14}
D.~Krej\v{c}i\v{r}\'{\i}k and N.~Raymond.
\newblock Magnetic effects in curved quantum waveguides.
\newblock {\em Ann. Henri Poincar\'{e}}, 15(10):1993--2024, 2014.

\bibitem{LP3d}
K.~Lu and X.-B. Pan.
\newblock Surface nucleation of superconductivity in 3-dimensions.
\newblock volume 168, pages 386--452. 2000.
\newblock Special issue in celebration of Jack K. Hale's 70th birthday, Part 2
  (Atlanta, GA/Lisbon, 1998).

\bibitem{Mont}
R.~Montgomery.
\newblock Hearing the zero locus of a magnetic field.
\newblock {\em Comm. Math. Phys.}, 168(3):651--675, 1995.

\bibitem{PP1}
K.~Pankrashkin and N.~Popoff.
\newblock Mean curvature bounds and eigenvalues of {R}obin {L}aplacians.
\newblock {\em Calc. Var. Partial Differential Equations}, 54(2):1947--1961,
  2015.

\bibitem{R3d}
N.~Raymond.
\newblock {\em Bound states of the magnetic {S}chr\"{o}dinger operator},
  volume~27 of {\em EMS Tracts in Mathematics}.
\newblock European Mathematical Society (EMS), Z\"{u}rich, 2017.

\bibitem{RVN15}
N.~Raymond and S.~V\~{u}~Ng\d{o}c.
\newblock Geometry and spectrum in 2{D} magnetic wells.
\newblock {\em Ann. Inst. Fourier (Grenoble)}, 65(1):137--169, 2015.

\end{thebibliography}
\end{document}